\theoremstyle{plain}
\newtheorem{cor}{Corollary}
\newtheorem{prop}{Proposition}
\newtheorem{exa}{Example}
\newtheorem{thm}{Theorem}
\newtheorem{lem}{Lemma}
\theoremstyle{remark}
\newtheorem{df}{Definition}
\newtheorem{ass}{Assumption}
\newcommand{\pa}[1]{\left({#1}\right)}
\newcommand{\norm}[1]{\left\|{#1}\right\|}
\newcommand{\cro}[1]{\left[{#1}\right]}
\newcommand{\ab}[1]{\left|{#1}\right|}
\newcommand{\ac}[1]{\left\{{#1}\right\}}
\newcommand{\Var}{\mathop{\rm Var}\nolimits}
\newcommand{\B}{{\mathbb{B}}}
\newcommand{\E}{{\mathbb{E}}}
\renewcommand{\L}{{\mathbb{L}}}
\newcommand{\N}{{\mathbb{N}}}
\renewcommand{\P}{{\mathbb{P}}}
\newcommand{\R}{{\mathbb{R}}}
\newcommand{\sA}{{\mathscr{A}}}
\newcommand{\sB}{{\mathscr{B}}}
\newcommand{\sC}{{\mathscr{C}}}
\newcommand{\sD}{{\mathscr{D}}}
\newcommand{\sE}{{\mathscr{E}}}
\newcommand{\sF}{{\mathscr{F}}}
\newcommand{\sM}{{\mathscr{M}}}
\newcommand{\sO}{{\mathscr{O}}}
\newcommand{\sP}{{\mathscr{P}}}
\newcommand{\sT}{{\mathscr{T}}}
\newcommand{\sX}{{\mathscr{X}}}
\newcommand{\frD}{{\mathfrak{D}}}
\DeclareMathAlphabet{\mathscrbf}{OMS}{mdugm}{b}{n}
\newcommand{\cA}{{\mathcal{A}}}
\newcommand{\cD}{{\mathcal{D}}}
\newcommand{\cE}{{\mathcal{E}}}
\newcommand{\cF}{{\mathcal{F}}}
\newcommand{\cI}{{\mathcal{I}}}
\newcommand{\cJ}{{\mathcal{J}}}
\newcommand{\cL}{{\mathcal{L}}} 
\newcommand{\cM}{{\mathcal{M}}}
\newcommand{\cO}{{\mathcal{O}}}
\newcommand{\cR}{{\mathcal{R}}}
\newcommand{\cT}{{\mathcal{T}}}
\newcommand{\cV}{{\mathcal{V}}}
\newcommand{\gt}{{\mathbf{t}}}
\newcommand{\gu}{{\mathbf{u}}}
\newcommand{\gv}{{\mathbf{v}}}
\newcommand{\gw}{{\mathbf{w}}}
\newcommand{\gx}{{\mathbf{x}}}
\newcommand{\gy}{{\mathbf{y}}}
\newcommand{\gE}{{\mathbf{E}}}
\newcommand{\gI}{{\mathbf{I}}}
\newcommand{\gP}{{\mathbf{P}}}
\newcommand{\gT}{{\mathbf{T}}}
\newcommand{\gZ}{{\mathbf{Z}}}
\newcommand{\bs}[1]{\boldsymbol{#1}}
\newcommand{\bsX}{{\bs{X}}}
\newcommand{\eps}{\varepsilon}
\newcommand{\eref}[1]{(\ref{#1})}
\renewcommand{\ge}{\geqslant}
\renewcommand{\le}{\leqslant}
\renewcommand{\geq}{\geqslant}
\renewcommand{\leq}{\leqslant}
\newcommand{\1}{1\hskip-2.6pt{\rm l}}
\newcommand{\0}{{\bf 0}}
\newcommand{\scal}[2]{\langle #1,#2\rangle}
\newcommand{\etc}[1]{#1_1,\ldots,#1_n}
\newcommand{\dps}[1]{\displaystyle{#1}}
\newcommand{\on}{^{\otimes n}}
\newcommand{\et}{^{\star}}
\newcommand{\dTV}[2]{d\!\left(#1,#2 \right)}
\DeclarePairedDelimiter\ceil{\lceil}{\rceil}
\newcommand{\PES}[1]{\ceil*{#1}}
\begin{document}

\title[Robust density estimation with the $\L_{1}$-loss]{Robust density estimation with the $\L_{1}$-loss. Applications to the estimation of a density on the line satisfying a shape constraint}

\author{Yannick BARAUD}
\address{Department of Mathematics, University of Luxembourg, Maison du nombre, 
6 avenue de la Fonte, L-4364 Esch-sur-Alzette, Grand Duchy of Luxembourg}
\email{yannick.baraud@uni.lu}

\author{H\'el\`ene HALCONRUY}
\address{Samovar, Telecom SudParis, Institut Polytechnique de Paris, France}
\email{helene.halconruy@telecom-sudparis.eu}

\author{Guillaume MAILLARD}
\email{guillaume.maillard@ensai.fr}
\address{ENSAI
Campus de Ker Lann
51 Rue Blaise Pascal
BP 37203
35172 BRUZ Cedex, France}


\begin{abstract}
We tackle the problem of estimating the distribution of presumed i.i.d.\ observations for the total variation loss. Our approach is based on density models and is versatile enough to cope with many different ones, including some for which the Maximum Likelihood Estimator (MLE for short) does not exist. We mainly illustrate the properties of our estimator on models of densities on the line that satisfy a shape constraint. We show that it possesses some similar optimality properties, with regard to some global rates of convergence, as the MLE does when it exists. It also enjoys some adaptation properties with respect to some specific densities in the model for which our estimator is proven to converge at the parametric rate $1/\sqrt{n}$. More important is the fact that our estimator is robust, not only with respect to model misspecification, but also to contamination, the presence of outliers among the dataset and the equidistribution assumption we started from. This means that the estimator performs almost as well as if the data were i.i.d.\ with density $p$ in a situation where these data are only independent and most of their marginals are close enough in total variation to a distribution with density $p$. We also show that our estimator converges to the average density of the data, when this density belongs to the model, even when none of the marginal densities does. Our main result on the risk of the estimator takes the form of an exponential deviation inequality which is nonasymptotic and involves explicit numerical constants. We deduce from it several global rates of convergence, including some bounds for the minimax $\L_{1}$-risks over the sets of concave, log-concave or more generally $s$-concave densities with $s>-1$. These bounds derive from some approximation results of densities which are monotone, convex, concave or more generally $s$-concave. These results may be of independent interest. 
\end{abstract}


\keywords{Density estimation, robust estimation, shape constraint, total variation loss, minimax theory}
\subjclass{Primary 62G07, 62G35; Secondary 62C20}
\thanks{This project has received funding from the European Union's Horizon 2020 research and innovation programme under grant agreement N\textsuperscript{o} 811017}

\maketitle

\section{Introduction}
Estimating a density under a shape constraint has been addressed by many authors since the pioneering papers by Grenander ~\cite{MR0093415,MR599175}, Rao~\cite{rao1969estimation}, Groeneboom~\cite{MR822052} and Birg\'e~\cite{MR1026298} for estimating a nonincreasing density on $(0,+\infty)$. It is well-known that this problem can be elegantly solved by the Grenander estimator -- see  Grenander~\cite{MR0093415} -- which is the maximum likelihood estimator (MLE for short) over the set of densities that satisfy this monotonicity constraint on $(0,+\infty)$. Practically, the MLE is obtained by taking the (right-hand) derivative of the least concave majorant of the cumulative distribution function of the data. Rao~\cite{rao1969estimation} and  Grenander ~\cite{MR0093415,MR599175} established the local asymptotic properties of the Grenander estimator while  Groeneboom~\cite{MR822052} and  Birg\'e~\cite{MR1026298} studied its 
global $\L_{1}$-estimation error over some functional classes of interest. Birg\'e proved that the worst-case risk of the Grenander estimator over  the set $\sF(H,L)$, that consists of nonincreasing densities bounded by $H>0$ and supported on $[0,L]$ with $L>0$, is of order $(\log(1+HL)/n)^{1/3}$. This rate turns out to be optimal in the minimax sense over this class of densities. The Grenander estimator therefore performs almost as well (apart maybe from numerical constants) as a minimax estimator over $\sF(H,L)$ that would know the values of $H$ and $L$ in advance. Even more surprising is the fact that the Grenander estimator converges at the parametric rate $1/\sqrt{n}$ when the target density is piecewise constant on the elements of partition of $(0,+\infty)$ into a finite number of intervals -- see Grenander ~\cite{MR599175}, Groeneboom~\cite{MR822052} and Birg\'e~\cite{MR1026298}. As a consequence, the Grenander estimator can nicely {\em adapt} to the specific features of the density to be estimated even though these features are {\em a priori} unknown. 

Because of its nice theoretical properties and the fact that it can easily be calculated in this specific situation,  the MLE has widely and almost exclusively been used to solve many other density estimation problems under shape constraints. We refer to  Groeneboom {\em et al}~\cite{MR1891741} for convex densities and to Balabdaoui and Wellner ~\cite{MR2435342} and Gao and Wellner~\cite{MR2520591} for $k$-monotone ones. Although the construction of the MLE is not based on smoothness assumptions in these cases, it still requires that some specific features of the target density, and in particular its support, be known. It was already the case for the monotonicity constraint since the left-endpoint of its support needed to be known to build the Grenander estimator. This kind of prior information might, however, not be available in practice and a more reasonable assumption would be that only an interval containing this support be known. Unfortunately, under this weaker assumption the MLE does not exist and a search for alternative  estimators becomes necessary to solve this issue. 

To our knowledge, the first attempt to solve it dates back to Wegman \cite{Wegman_1970}. He  designed an MLE-type estimator restricted to a class of unimodal densities that attain their modes
on an interval of length not smaller than some parameter $\varepsilon>0$. This parameter needs to be tuned by the statistician and its choice influences the performance of the resulting estimator. Birg\'e \cite{Birge_1997} proposed a different approach based on data-driven choice of a Grenander estimator among the collection of those associated to all possible modes. Up to an additional term of order $1/\sqrt{n}$, he proved that the $\L_{1}$-risk of the selected estimator is the same as that of the Grenander estimator that would know the value of the mode in advance. 

The situation is different when the density is assumed to be log-concave on $\R$, or more generally on $\R^{d}$ with $d>1$. The construction of the MLE is then free of any assumption on the support of the density. The study of the MLE on the set of log-concave densities has led to intensive work. We refer the reader to D\"umbgen and Rufibach~\cite{MR2546798}, Doss and Wellner~\cite{MR3485950} , Cule and Samworth~\cite{MR2645484}, Kim and Samworth~\cite{Kim2016} and Feng {\em et al}~\cite{Feng2021} as well as the references therein. Kim and Samworth~\cite{Kim2016} described the uniform rates of convergence of the MLE for the squared Hellinger loss over the class of log-concave densities in dimension $d\in \{1,2,3\}$ and they proved these rates to be minimax (up to a possible logarithmic factor). Besides, as for the monotonicity constraint in dimension one, the MLE also possesses for log-concave densities some adaptation properties: it converges at the parametric rate $1/\sqrt{n}$ (for the Hellinger loss and up to a possible logarithmic factor) when the logarithm of the target density is piecewise affine on a suitable convex subset of $\R^{d}$ with $d\in\{1,2,3\}$. This result was established by Kim {\em et al}~\cite{MR3845018} when $d=1$ and extended to the dimensions $d\in\{2,3\}$ by Feng {\em et al}~\cite{Feng2021}. In dimension $d\geqslant 4$, Kur {\em et al} \cite{Kur_2019optimality} showed that the MLE converges at a minimax rate (up to a logarithmic factor ) for the Hellinger loss. 

In the literature, there exist various generalisations of log-concavity. They usually take the following form. Given some function $\cL$ on $\R_{+}$, one considers the class of these densities $p$ on $\R^{d}$ for which  $\cL\circ p$ is concave. In the present paper, we shall restrict to the case $d=1$ and call these densities {\em $\cL$-concave}. Of particular interest is the case where the function $\cL$ is of the form $u\mapsto {\rm sign}(s) u^{s}$, with $s\in\R\setminus\{0\}$. An $\cL$-concave density is then called $s$-concave and log-concave ones are 0-concave by convention. While log-concave densities possess light tails that decrease at least exponentially fast as we move away from their modes, $s$-concave densities may exhibit heavier tails that decrease at a polynomial rate. The study of the existence, the consistency and the rates of convergence of the MLE over classes of $\cL$-concave densities can be found in Seregin and Wellner \cite{Ser-Well2010} and Doss and Wellner \cite{MR3485950}. In particular, Doss and Wellner \cite{MR3485950} showed that the MLE does not exist over the class of $s$-concave densities when $s<-1$ while for $s>-1$, it converges at a rate which is at least $n^{-2/5}$ for the Hellinger loss. The convergence established by Doss and Wellner is, however, not uniform over the class when $s\in (-1,0)$. For these values of $s$, we do not know whether or not the convergence of the MLE is uniform. It seems that the study of the MLE for estimating such densities has received less attention than for log-concave ones. In particular, we are not aware of any adaptation properties of the MLE over classes of $s$-concave densities when $s\ne 0$. Finally, we mention that some alternative approaches based on Rényi divergences have been proposed by Koenker and Mizera \cite{Koen-Miz2010} and Han and Wellner \cite{Han-Well2016} to solve the problem of estimating $s$-concave densities. The results established in these papers mainly address the calculability of these estimators, their connexions with the MLE and their asymptotic performance.


In the one dimensional case, our aim is to design a versatile estimation strategy that can be applied to a wide variety of density models, including some for which the MLE does not exist, and that automatically results in estimators with good estimation properties. In particular, these estimators should keep the nice minimax and adaptation properties of the MLE, when it exists, for estimating a density under a shape constraint. They should also remain stable with respect to a slight departure from the ideal situation where the data are truly i.i.d.\ and their density satisfies the required  shape constraint. In particular, the estimator should still perform well when the equidistribution assumption is slightly violated and the data set contains a small portion of outliers. It should also perform well when the shape of the density is slightly different from what was originally expected, that is, when the true density of the data does not satisfy the shape constraint but is close enough (with respect to the $\L_{1}$-loss) to a density that does satisfy it. In a nutshell, our aim is to build estimators that are {\em robust}. 

By robust, we mean here that the performance of the estimator should remain stable with respect to a small discrepancy (with respect to the Hellinger or the $\L_{1}$ distance) between the target density and the statistical model. It turns out that in some cases the MLE provides a robust solution when the support of the density is known or partly known. This is for example the case for estimating a nonincreasing or a convex density on $(a,+\infty)$ when $a\in\R$ is known or a concave density on $(a,b)$ when $a,b\in\R$ are both known.  The robustness of the MLE in such models can be viewed as a consequence of Baraud and Birg\'e~\cite{BarBir2018}[Section~6, Proposition~5] which states the following: if the MLE over a convex set of densities exists and takes positive values at the data,  it necessarily coincides with a $\rho$-estimator and therefore inherits its robustness properties. The sets that consist of nonincreasing, convex or concave densities on a given interval are convex and this result therefore applies. An illustration of this is that the Grenander estimator on the set $\cF$ of nonincreasing densities on $(0,+\infty)$ remains stable for estimating a density $p\et$ on $(0,+\infty)$ which does not belong to $\cF$ but lies close enough to it (with respect to the $\L_{1}$-distance). However, this apparent stability of the Grenander estimator should be interpreted with caution. The following example shows that a small error on the presumed support of $p\et$ may result in a dramatic misspecification of the model. Assume that $p\et$ is the uniform distribution on $(a,2a)$, where $a$ is a small positive number, and consider an arbitrary (left continuous) density $q$ in the set $\cF$ of nonincreasing densities on $(0,+\infty)$. Then,  
\[
\int_{0}^{+\infty}\ab{p\et(x)-q(x)}dx\ge \int_{0}^{a}q(x)dx+\int_{a}^{2a}\ab{\frac{1}{a}-q(x)}dx\ge a q(a)+\pa{\frac{1}{a}- q(a)}a=1.
\]
In particular, the $\L_{1}$-risk of the Grenander estimator on $\cF$ for estimating $p\et$ is at least 1 while it would no exceed $3.6/\sqrt{n}$ if $a$ were known, as proven in Birg\'e~\cite{MR1026298} [Theorem~1 and inequality~(3.3)]. This example shows that a small error on the presumed support of the target density may result in a large estimation error. Our approach allows one  to deal with statistical models that only rely on the presumed shape of the density and not on some information on its support. As a consequence, our estimators do not suffer from such weaknesses. 

We are not aware of many robust strategies for estimating a density under a shape constraint. For estimating concave and log-concave densities, Chan {\em et al}
~\cite{Chan2014EfficientDE} proposed a piecewise linear estimator on a data-driven partition of $\R$ into intervals. Their estimator is minimax optimal on the sets of concave and log-concave densities and it enjoys some robustness properties with respect to a departure (in $\L_{1}$-distance) of the true density from the model. Their approach is based on the estimation procedure described in Devroye and Lugosi~\cite{MR1843146} and uses the fact that the Yatracos class associated to the set of the densities that are piecewise linear on a partition of the line into a fixed  number of intervals is a Vapnik-Chervonenkis (VC) class. We refer the reader to van~der Vaart and Wellner~\cite{MR1385671} for an introduction to VC classes and their applications to statistics. Despite the desirable properties described above, the estimator proposed by Chan {\em et al} does not possess some of the nice ones that make the MLE so popular. For estimating a log-concave density, the MLE converges at a global rate of order $n^{-2/5}$ (for the $\L_{1}$ and Hellinger distances) but, as already mentioned, it also possesses some adaptation properties with respect to these densities the logarithms of which are piecewise linear. The estimator proposed by Chan {\em et al} does not possess such a property. Besides, their approach provides competitors to the MLE for some specific density models only.  Chan {\em et al}'s approach cannot deal with the estimation of a monotone density on a half-line for example and therefore cannot be used to provide a surrogate to the Grenander estimator. 

In dimension one, Baraud and Birg\'e~\cite{MR3565484}[Section~7] proposed to solve the problem of robust estimation of a density under a shape constraint by using  $\rho$-estimation. Their results hold for the squared Hellinger loss while ours is for the total variation one (TV-loss for short). The estimator we propose is more specifically designed for this loss and quite surprisingly the risk bounds we get for the TV-loss are slightly different from those obtained by Baraud and Birg\'e for the (squared) Hellinger one. We do not know if $\rho$-estimators would satisfy the same $\L_{1}$-risk bounds as those we establish here. 

Our procedure shares some similarities with that proposed by Devroye and Lugosi~\cite{MR1843146}. When the Yatracos class associated with the density model is VC, the risk bound we establish is similar to theirs except for the fact that we provide explicit numerical constants. However, unlike them, we also consider density models for which the Yatracos class is not VC, which is typically the case for models of densities that satisfy a shape constraint. Nevertheless, it is likely that with the same proof techniques, we could establish similar results for Devroye and Lugosi's estimators as those we prove for ours. 

The theory of $\ell$-estimation introduced in Baraud~\cite{BY-TEST} provides a generic way of building estimators that possess the robustness properties we are looking for. Even though the present paper is in the same vein, we modify Baraud's  procedure and establish, for the modified $\ell$-estimator, risk bounds with numerical constants that are essentially divided by a factor 2 as compared to his. {Another important difference with Baraud's result lies in the following fact. When the data are only independent with marginal densities $p_{1}\et,\ldots,p_{n}\et$, we measure the performance of our density estimator $\widehat p$ in terms of its $\L_{1}$-distance $\|p\et-\widehat p\|$ between $\widehat p$ and the average of the marginal densities $p\et=n^{-1}\sum_{i=1}^{n}p_{i}\et$. In contrast, Baraud considered, as a loss function, the average of the $\L_{1}$-distances of $\widehat p$ to the $p_{i}\et$, i.e.\ the quantity $n^{-1}\sum_{i=1}^{n}\norm{\widehat p-p_{i}\et}$. As a consequence, unlike Baraud, we can establish the convergence of  our estimator to $p\et$, as soon as its belongs to the model, even in the unfavourable situation where none of the marginals $p_{i}\et$ belongs to it.} 

The risk bounds we obtain hold for very general density models but our applications focus on the estimation of a density on the line that satisfies a shape constraint.  In a nutshell, we establish the following results which are to our knowledge new in the literature. 
\begin{itemize}
\item The procedure applies to a large variety of density models including some for which the MLE does not exist, e.g.\ the set of all monotone densities on an unknown half-line, the set of all unimodal densities on $\R$, the set of all convex densities on an interval or the set of $s$-concave densities with $s<-1$.
\item The global rates of convergence that we establish for our estimator are optimal in all the models we consider.
\item The estimator possesses some adaptation properties: it converges at the parametric rate $1/\sqrt{n}$ when the data are i.i.d.\ with a density that belongs to the model and satisfies some special properties. In particular, our estimator shares similar adaptation properties as those established for the MLE under a monotonicity or a log-concavity constraint. We also establish some adaptation properties on density models for which the MLE does not even exist.  
\item The estimator is robust with respect to model misspecification, contamination, the presence of outliers and is robust with respect to a departure from the equidistribution assumption we started from. 
\end{itemize}

Of particular interest is the class of $\cL$-concave densities. In this special case, we prove that our TV-estimator exists and enjoys some adaptation properties  as soon as the function $\cL$ is increasing. This result applies thus to all classes of $s$-concave densities without any restriction on $s$. If the function $\cL$ is additionally convex, the $\L_{1}$-risk of our TV-estimator can be bounded uniformly over the class of $\cL$-concave densities by a quantity that goes to 0 at rate $n^{-2/5}$. A similar bound can  also be established when $\cL$ is increasing and concave provided that the right and left derivatives of $\cL$ satisfy some suitable conditions. We show that these results apply, in particular, to all classes of $s$-concave densities with parameters $s>-1$.

The paper is organized as follows. The statistical framework is described in Section~\ref{sect-2} and the construction of the estimator as well as its properties are presented in Section~\ref{sec-ell-estimator}. The more specific properties of our estimator for estimating a mixture of densities that are monotone, convex or concave can be found in Sections~\ref{sect-kmono} and~\ref{sect-cvxcve} respectively while the cases of log-concave, $s$-concave and more generally $\cL$-concave densities are tackled in Section~\ref{section-s-shape}. These sections also contain some approximation results which may be of independent interest and are central to our approach. Some concluding remarks in Section~\ref{sect-conclusion} discuss the specificities and advantages of our approach as well as its limitations. The proofs are postponed to Section~\ref{sect-7}. 

\section{The statistical framework and main notations}\label{sect-2}
Let $\etc{X}$ be $n$ independent random variables and $P_{1}\et,\ldots,P_{n}\et$ their marginals on a measurable space $(\sX,\sA)$. Our aim is to estimate the $n$-tuple $\gP\et=(P_{1}\et,\ldots,P_{n}\et)$ from the observation of $\bsX=(X_{1},\ldots,X_{n})$ on the basis of a suitable {\em model} for $\gP\et$. More precisely, given a $\sigma$-finite measure $\mu$ on $(\sX,\sA)$ and a family $\overline \cM$ of densities with respect to $\mu$, we shall treat the $X_{i}$ as if they were i.i.d.\ with a density that belongs to $\overline \cM$, even though this might not be true, and estimate $\gP\et$ by a $n$-tuple of the form $(\widehat P,\ldots,\widehat P)$ where $\widehat P=\widehat P(\bsX)=\widehat p\cdot \mu$ is a random element of the set $\overline \sM=\{P=p\cdot \mu, p\in\overline\cM\}$. We refer to $\overline \sM$ and $\overline \cM$ as our probability and density models respectively. For the sake of simplicity, we abusively identify $\gP\et$ with the distribution $\bigotimes P_{i}\et$ of the observation $\bsX$. 

The density models we have in mind are nonparametric and consist of densities that satisfy a given shape constraint: monotonicity on a half line, convexity on an interval, log-concavity on the line, among other examples. 

In order to evaluate the accuracy of our estimator, we use the TV-loss $d$ on the set $\sP$ of all probability measures on $(\sX,\sA)$. We denote by $\norm{\cdot}$ the $\L_{1}$-norm on the set $\L_{1}(\sX,\sA,\mu)$ that consists of the equivalence classes of integrable functions on $(\sX,\sA,\mu)$. We recall that the TV-loss is a distance defined for $P,Q\in\sP$ by 
\begin{equation}\label{TV_def_eq}
\dTV{P}{Q}=\underset{A\in\sA}\sup\cro{P(A)-Q(A)}
\end{equation}
and if $P$ and $Q$ are absolutely continuous with respect to our dominating measure $\mu$, 
\[
\dTV{P}{Q}=\frac{1}{2}\norm{\frac{dP}{d\mu}-\frac{dQ}{d\mu}}.
\]
In general, for each $\gP \et$, denote by $P\et$ the uniform mixture of the marginals:
\[ 
P\et = \frac{1}{n} \sum_{i = 1}^n P_i\et. 
\]
In particular, when the data are i.i.d., their common distribution is $P\et\in\sP$.
%

Throughout this paper, we assume the following.
\begin{ass}\label{Ass00}
There exists a countable subset $\cM$ of $\overline \cM$ that is dense in $\overline \cM$ for the $\L_{1}$-norm.
\end{ass}
We recall that a subset of a separable metric space is separable. In particular, when the space $\L_{1}(\sX,\sA,\mu)$ is separable for the $\L_{1}$-norm, so is any subset $\overline \cM$ of densities on $(\sX,\sA,\mu)$ and Assumption~\ref{Ass00} is automatically satisfied. This is in particular the case when $(\sX,\sA)=(\R^{k},\sB(\R^{k}))$ with $k\ge 1$ and $\mu$ is the Lebesgue measure. If a family $\overline \cM$ of densities satisfies our Assumption~\ref{Ass00}, so does any subset $\overline \cD$ of $\overline \cM$. The set $\overline\cD$  may in turn be associated with a subset $\cD$ and a probability set $\sD=\{P=p\cdot\mu,\; p\in\cD\}$ that are both countable and respectively dense in $(\overline \cD,\norm{\cdot})$ and  $\overline \sD=\{P=p\cdot\mu,\; p\in\overline \cD\}$ for the total variation distance $d$. We may therefore write
\[
\inf_{P\in \sD}\dTV{P\et}{P}=\inf_{P\in \overline \sD}\dTV{P\et}{P}.
\]
We shall repeatedly apply this equality to sets $\overline \sD$ of interest without any further notice. As a consequence,
 replacing a density model $\overline \cD$ by a countable and dense subset $\cD$ changes nothing from the approximation point of view. Nevertheless, we prefer to work with $\cD$ rather than $\overline \cD$ in order to avoid some measurability issues that may result from the calculation of the supremum of an empirical process indexed by $\overline \cD$.  

Throughout the present paper, we use the same kind of notations as $\overline \cD,\cD,\sD,\overline \sD$ in order to distinguish between the density model, a countable and dense subset of it and their corresponding probability models. Following these notations $\sM=\{P=p\cdot\mu,\; p\in\cM\}$. An interval $I$ of $\R$ is said to be {\em nontrivial} if its interior $\mathring{I}$ is not empty or equivalently if its length is positive. 
When we say that {\em $p$ is a density on a (nontrivial) interval $I$}, we mean that $p$ is a density that vanishes outside $I$. The set of positive integers is denoted $\N\et$ and $|A|$ is the cardinality of a set $A$.  By convention, $\sum_{\varnothing}=0$. For an integrable function $f$ on $(\sX\on,\sA\on)$, $\E[f(\bsX)]$ is the integral of $f$ with respect to the probability measure $\bigotimes_{i=1}^{n}P_{i}\et=\gP\et$ while for $f$ on $(\sX,\sA)$ and $S\in\sP$, $\E_{S}[f(X)]$ is the integral of $f$ with respect to $S$. We use the same conventions for $\Var\left(f(\bsX)\right)$ and $\Var_{S}\left(f(X)\right)$. 
Finally, given a sequence of positive numbers $(v_{n})_{n\ge 1}$ tending to 0 as $n$ tends to infinity, we shall (abusively) say that an estimator converges at rate $v_{n}$ if its rate of convergence is $v_{n}$ or possibly faster than $v_{n}$.

\section{An $\ell$-type estimator for the TV-loss}\label{sec-ell-estimator}
Let $\overline \cM$  be a density model that satisfies our  Assumption~\ref{Ass00} for some $\cM\subset \overline \cM$. Given $P=p\cdot \mu$ and $Q=q\cdot \mu$ in $\sM$, we define
\begin{equation}\label{Non_symmetric_test_eq}
t_{(P,Q)}=\1_{q>p}-P(q>p)=P(p\ge q)-\1_{p\ge q}.
\end{equation}
Given the family $\sT=\{t_{(P,Q)},\, (P,Q)\in\sM^2\}$, we define for $P,Q\in\sM$ and  $\gx\in \sX^n$ 
\begin{equation}\label{def-TPQ}
\gT(\gx,P,Q)=\sum_{i=1}^n t_{(P,Q)}(x_i)=\sum_{i=1}^{n}\cro{\1_{q>p}(x_{i})-P(q>p)}
\end{equation}
and 
\[
\gT(\gx,P)=\underset{Q\in\sM}\sup\gT(\gx,P,Q).
\]
For  $\varepsilon>0$, we finally define our estimator  as any (measurable) element $\widehat{P}=\widehat p\cdot\mu$ that belongs to the set
\begin{equation}\label{def-EX}
\sE(\bsX)=\ac{P\in\sM,\, \gT(\bsX,P)\le \underset{P'\in\sM}\inf \gT(\bsX,P')+\varepsilon}.
\end{equation}
We call $\widehat P$ and $\widehat p$ a {\em TV-estimator} on $\sM$ and $\cM$ respectively. The parameter $\varepsilon$ is introduced in case a minimizer of $P\mapsto \gT(\bsX,P)$ does not exist on $\sM$. Any $\varepsilon$-minimizer would do provided that $\varepsilon$ is not too large.

The construction of estimators from an appropriate family of test statistics $t_{(P,Q)}$ is described in Baraud~\cite{BY-TEST} and our approach is in a similar vein. In particular, we use the following key property on the family $\sT$ (which can be compared to Assumption~1 in Baraud~\cite{BY-TEST}). 
\begin{lem}\label{bound_test_TV_lem}
For all probabilities $P,Q\in\sM$ and $S\in\sP$,
%
\begin{equation}
\dTV{P}{Q}- \dTV{S}{Q}\le \E_{S}\cro{t_{(P,Q)}(X)}\le \dTV{S}{P}.
\label{eq-SPQ}
\end{equation}
In particular, 
\begin{equation}\label{bound_test_TV_eq}
\dTV{P}{Q} - \dTV{P\et}{Q}\leq \frac{1}{n}\sum_{i=1}^n\E\cro{t_{(P,Q)}(X_i)}\leq \dTV{P\et}{P}\quad \text{where}\quad  P\et =\frac{1}{n}\sum_{i = 1}^n P_i\et.
\end{equation}
\end{lem}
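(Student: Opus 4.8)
The plan is to first establish the pointwise inequality \eqref{eq-SPQ} for an arbitrary probability $S\in\sP$, and then derive \eqref{bound_test_TV_eq} simply by averaging over $i=1,\dots,n$ and identifying $n^{-1}\sum_i\dTV{P_i\et}{P}$ with $\dTV{P\et}{P}$ via the convexity of the total variation distance (or, more precisely, via the fact that $\norm{\cdot}$ is a norm and $P\et$ is the uniform mixture of the $P_i\et$). So the real content is \eqref{eq-SPQ}.

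For the upper bound in \eqref{eq-SPQ}, I would recall the first expression in \eqref{Non_symmetric_test_eq}, namely $t_{(P,Q)}=\1_{q>p}-P(q>p)$, so that
\[
\E_S\cro{t_{(P,Q)}(X)}=S(q>p)-P(q>p).
\]
Since $\{q>p\}$ is a particular measurable set, the defining formula \eqref{TV_def_eq} for $\dTV{S}{P}=\sup_{A\in\sA}\cro{S(A)-P(A)}$ immediately gives $S(q>p)-P(q>p)\le \dTV{S}{P}$, which is the right-hand inequality.

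For the lower bound, I would instead use the second expression in \eqref{Non_symmetric_test_eq}, namely $t_{(P,Q)}=P(p\ge q)-\1_{p\ge q}$, which yields
\[
\E_S\cro{t_{(P,Q)}(X)}=P(p\ge q)-S(p\ge q).
\]
Now I write $P(p\ge q)-S(p\ge q)=\bigl[P(p\ge q)-Q(p\ge q)\bigr]+\bigl[Q(p\ge q)-S(p\ge q)\bigr]$. The key observation is that $\{p\ge q\}$ is precisely the set on which the supremum in $\dTV{P}{Q}=\sup_A\cro{P(A)-Q(A)}$ is attained (because $P(A)-Q(A)=\int_A (p-q)\,d\mu$ is maximized by taking $A=\{p\ge q\}$), so $P(p\ge q)-Q(p\ge q)=\dTV{P}{Q}$. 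For the second bracket, \eqref{TV_def_eq} applied to the set $\{p< q\}$ gives $S(p<q)-Q(p<q)\le\dTV{S}{Q}$, i.e. $Q(p\ge q)-S(p\ge q)\ge-\dTV{S}{Q}$. Combining, $\E_S\cro{t_{(P,Q)}(X)}\ge\dTV{P}{Q}-\dTV{S}{Q}$, which is the left-hand inequality.

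Finally, for \eqref{bound_test_TV_eq} I would apply \eqref{eq-SPQ} with $S=P_i\et$ for each $i$, sum, and divide by $n$; the upper bound becomes $n^{-1}\sum_i\dTV{P_i\et}{P}$, and since $\dTV{\cdot}{P}$ viewed through $2\,d(\cdot,P)=\norm{\cdot-p}$ is convex in its first argument (triangle inequality for the $\L_1$-norm applied to $p\et-p=n^{-1}\sum_i(p_i\et-p)$), this is at least... rather, is bounded above by nothing further but we want it the other way: in fact $\dTV{P\et}{P}\le n^{-1}\sum_i\dTV{P_i\et}{P}$, so to get the clean statement with $\dTV{P\et}{P}$ on the right I would note that actually we want an upper bound on $n^{-1}\sum_i\E[t_{(P,Q)}(X_i)]$, and the pointwise bound already gives $n^{-1}\sum_i\dTV{P_i\et}{P}$, which is $\ge\dTV{P\et}{P}$ — so a small amount of care is needed here: the correct route is to redo the averaging argument directly, bounding $n^{-1}\sum_i\bigl[S_i(q>p)-P(q>p)\bigr]=P\et(q>p)-P(q>p)\le\dTV{P\et}{P}$ using \eqref{TV_def_eq}, and similarly for the lower bound with the set $\{p\ge q\}$. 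In other words, \eqref{bound_test_TV_eq} follows by repeating the proof of \eqref{eq-SPQ} verbatim with the single distribution $S$ replaced throughout by the mixture $P\et$, which is legitimate because $P\et\in\sP$ and the only properties used are \eqref{TV_def_eq} and the identification of the optimal set.

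The main obstacle, such as it is, is purely bookkeeping: one must be careful to use the $\1_{q>p}$ representation of $t_{(P,Q)}$ for the upper bound and the $\1_{p\ge q}$ representation for the lower bound, and to recognize that $\{p\ge q\}$ (resp. $\{q>p\}$) is the extremal set in the variational formula \eqref{TV_def_eq} for $\dTV{P}{Q}$. No genuine difficulty is expected.
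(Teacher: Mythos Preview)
Your approach is correct and essentially identical to the paper's: both compute $\E_S[t_{(P,Q)}(X)]$ as a difference of probabilities of the level set, bound the upper side directly by \eqref{TV_def_eq}, and for the lower side add and subtract the $Q$-probability of that set to produce $\dTV{P}{Q}$ exactly plus a term bounded below by $-\dTV{S}{Q}$; the paper works with $\{q>p\}$ throughout while you switch to the complementary set $\{p\ge q\}$ for the lower bound, which is purely cosmetic, and your final observation that \eqref{bound_test_TV_eq} is just \eqref{eq-SPQ} with $S=P\et$ is exactly what the paper does. One small slip to fix: from $S(p<q)-Q(p<q)\le \dTV{S}{Q}$ you get $Q(p\ge q)-S(p\ge q)\le \dTV{S}{Q}$, not $\ge -\dTV{S}{Q}$; the inequality you actually need follows instead from applying \eqref{TV_def_eq} directly to $\{p\ge q\}$, i.e.\ $S(p\ge q)-Q(p\ge q)\le \dTV{S}{Q}$.
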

However, our family $\sT$ does not satisfy the anti-symmetry assumption, namely $t_{(P,Q)}= -t_{(Q,P)}$, which is required for Baraud's construction. The risk bound that we establish below cannot therefore be deduced from Baraud~\cite{BY-TEST}. In fact, for the specific problem we want to solve here the anti-symmetry assumption can be relaxed which leads to an improvement on the numerical constants that are involved in the risk bounds. {Nevertheless, Baraud's approach and ours rely on similar heuristics. Denoting by $\overline P$ the best approximation of $P\et$ in $\sM$ with respect to the total variation distance, assuming that it exists for the sake of simplicity, we deduce from 
~\eref{bound_test_TV_eq} that the random variable $\gT(P)=\sup_{Q\in \sM}\E\cro{\gT(\bsX,P,Q)}$ satisfies 
\[
\dTV{P}{\overline P} - \dTV{P\et}{\overline P}\le \sup_{Q\in\sM}\cro{\dTV{P}{Q} - \dTV{P\et}{Q}}\leq \frac{\gT(P)}{n}\leq \dTV{P\et}{P}.
\]
These inequalities imply that 
\[
\ab{\frac{\gT(P)}{n}-\dTV{P}{\overline P}}\le \pa{\dTV{P\et}{P}-\dTV{P}{\overline P}}\vee \dTV{P\et}{\overline P}\le \dTV{P\et}{\overline P}\quad \text{for all $P\in\sM$}
\]
which means that the mappings $P\mapsto \gT(P)/n$ and $P\mapsto \dTV{P}{\overline P}$ are uniformly close over $\sM$ when $P\et$ is close enough to the model $\sM$. We therefore expect that the minimizer of the mapping $P\mapsto \dTV{P}{\overline P}$, which is $\overline P$, will be close to that of $P\mapsto \gT(P)/n$ over $\sM$. The procedure we propose here is to replace $\gT(P)$ by its empirical counterpart $\gT(\bsX,P)=\sup_{Q\in \sM}\gT(\bsX,P,Q)$.}

Our construction also shares some similarities with that proposed by Devroye and Lugosi~\cite{MR1843146}[Section 6.8 p.55]. However, a careful look at their selection criterion shows that it is slightly different from ours. They replace our function $\gT(\cdot,P,Q)$ given by~\eref{def-TPQ} by 
\[
\gT_{\text{DL}}(\cdot, P,Q):\gx\mapsto \ab{\sum_{i=1}^{n}\cro{\1_{q\ge p}(x_{i})-P(q\ge p)}}.
\]
Their approach leads to a set of estimators $\sE_{\text{DL}}(\bsX)$ defined in the same way as \eref{def-EX} for $\gT_{\text{DL}}$ in place of $\gT$ and $\varepsilon=1$. 
\begin{proof}[Proof of Lemma~\ref{bound_test_TV_lem}]
Let $P,Q\in\sM$. Using the definition \eqref{Non_symmetric_test_eq} of $t_{(P,Q)}$ and that of the TV-loss given by~\eref{TV_def_eq}, we obtain that for all $S\in\sP$,
\[
\E_{S}\cro{t_{(P,Q)}(X)}=S(q>p)-P(q>p)\leq\dTV{S}{P},
\]
which is exactly the second inequality in~\eref{eq-SPQ}. To establish the first one, we use the fact that $\dTV{P}{Q}=Q(q>p)-P(q>p)$. This leads to 
\begin{align*}
\E_{S} \cro{t_{(P,Q)}(X)}
&=S(q>p)-Q(q>p)+\cro{Q(q>p)-P(q>p)}\ge -\dTV{S}{Q}+\dTV{P}{Q} .
\end{align*} 
Finally, \eref{bound_test_TV_eq} results from the observation that
\[ \frac{1}{n} \sum_{i = 1}^n \mathbb{E} \left[ t_{(P,Q)} (X_i) \right] = \mathbb{E}_{P\et}\left[ t_{(P,Q)}(X) \right]. \]
\end{proof}

\subsection{Properties of the estimator}
As for Baraud and Birg\'e~\cite{MR3565484}, our main result is based on the key notion of \textit{extremal point} in a model. However, in our situation, this notion is different from theirs. While we require a VC condition on a class of sets, they require a VC condition on a class of functions. 
%
\begin{df}\label{df-extremal}
Let $\cF$ be a class of real-valued functions on a set $\sX$ with values in $\R$. We say that an element $\overline{f}\in\cF$ is extremal in $\cF$ (or is an extremal point of $\cF$) with degree not larger than $\frD\ge 1$ if the classes of subsets
\begin{align*}
    \sC^{>}(\cF,\overline{f}) &= \big\{\{x\in\sX\,\big|\,q(x)>\overline{f}(x)\},\; q\in\cF\setminus\{\overline{f}\}\big\}
\end{align*}
and 
\begin{align*}
    \sC^{<}(\cF,\overline{f}) &= \big\{\{x\in\sX\,\big|\,q(x) < \overline{f}(x)\},\; q\in\cF\setminus\{\overline{f}\}\big\}    
\end{align*}
are both VC  with dimension not larger than $\frD$. 
\end{df}
Additionally, we say that $\overline P$ is {\em an extremal point} of $\overline \sM$ with degree not larger than $\frD\ge 1$ if there exists $\overline{p}\in\overline \cM$ such that $\overline P=\overline{p}\cdot\mu$ and $\overline p$ is extremal in $\overline \cM$ with degree not larger than $\frD$. 
For each $\frD\ge 1$, we denote by $\overline \cO(\frD)$ the set of extremal points $\overline p$ in $\overline \cM$ with degree not larger than $\frD$, $\cO(\frD)$ a countable and dense subset of it, $\sO(\frD)$ the corresponding set of probability measures and $\overline \cO=\bigcup_{\frD\ge 1}\overline \cO(\frD)$ the set of all extremal points in $\overline \cM$. Finally, let $\cM$ be a countable and dense subset of $\overline \cM$ containing $\bigcup_{\frD\ge 1}\cO(\frD)$. 

\begin{thm}\label{shape-estimation-th}
Let $\overline \cM$ be a density model that satisfies our Assumption~\ref{Ass00} and assume that the set $\overline \cO$ is nonempty. Then any TV-estimator $\widehat P$ on $\sM$ satisfies the following properties. For all product distributions $\gP\et$ of the data and for all $\xi>0$, with a probability at least $1-e^{-\xi}$ 
\begin{equation} \label{inthm_ubd_dist_p_phat}
\dTV{P}{\widehat P} \leq 2\dTV{P \et}{P} + 20\sqrt{\frac{5\frD}{n}}+\sqrt{\frac{2(\xi +\log 2)}{n}}+\frac{\varepsilon}{n}\quad \text{for all $\frD \geq 1$ and $P \in  \overline \sO(\frD)$.}
\end{equation}
In particular,
\begin{align}
&\dTV{P \et}{\widehat P}\leq \inf_{\frD\ge 1}\cro{ 3\inf_{P\in \overline \sO(\frD)}\dTV{P \et}{P} + 20\sqrt{\frac{5\frD}{n}}} +\sqrt{\frac{2(\log 2+\xi)}{n}}+\frac{\varepsilon}{n},\label{Thm1_risk_bound_deviation_eq}
\end{align}
with the convention $\inf_{\varnothing}=+\infty$.  As a consequence of \eqref{inthm_ubd_dist_p_phat},
\begin{equation} \label{inthm_ubd_expt_dist_p_phat}
    \mathbb{E}\left[ \dTV{P}{\widehat P} \right] \leq 2\dTV{P \et}{P} + 48\sqrt{\frac{\frD}{n}} +\frac{\varepsilon}{n}\quad \text{for all $\frD \geq 1$ and $P \in  \overline \sO(\frD)$.}
\end{equation}
Moreover by \eqref{Thm1_risk_bound_deviation_eq},
%
\begin{equation}\label{Thm1_risk_bound_expecation_eq}
\mathbb{E}\cro{\dTV{P \et}{\widehat P}} \leq  \inf_{\frD\ge 1}\left\{ 3\inf_{P\in \overline \sO(\frD)}\dTV{P \et}{P} + 48\sqrt{\frac{\frD}{n}} \right\}+\frac{\varepsilon}{n}.
\end{equation}
\end{thm}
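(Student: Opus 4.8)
The plan is to derive \eqref{inthm_ubd_dist_p_phat} from a uniform control of the centered empirical process attached to the statistics $t_{(P,Q)}$, and then obtain the other three inequalities by purely elementary manipulations. Concretely, for $P,Q\in\sM$ write
\[
Z(P,Q)=\frac1n\gT(\bsX,P,Q)-\frac1n\sum_{i=1}^n\E\cro{t_{(P,Q)}(X_i)}
=\frac1n\sum_{i=1}^n\pa{\1_{q>p}(X_i)-P(q>p)-\E\cro{\1_{q>p}(X_i)-P(q>p)}}.
\]
The key point is that, for a fixed extremal point $\overline p=\overline P\cdot\mu^{-1}$ of degree $\le D$, when $P$ is replaced by $\overline P$ the sets $\{q>\overline p\}$ (as $q$ ranges over $\cM$) form a VC class of dimension $\le D$, and similarly for $\{q<\overline p\}$; hence the centered empirical process $\sup_{Q\in\sM}|Z(\overline P,Q)|$ is a supremum over a VC class of indicator functions (shifted by their means), and one controls it by a standard VC-type bound: its expectation is $\le c\sqrt{D/n}$ for an explicit $c$, and by bounded differences (McDiarmid) it concentrates at scale $\sqrt{(\xi+\log 2)/(2n)}$. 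This is the step where the numerical constants $20\sqrt5$ and the $\log 2$ come from, and it is the main technical obstacle — everything else is bookkeeping with Lemma~\ref{bound_test_TV_lem}.

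Second, I would run the standard ``$\ell$-estimation'' comparison argument. Fix $D\ge1$ and $P\in\overline\sO(D)$; by density it suffices to treat $P\in\sO(D)\subset\sM$. By definition of $\widehat P$ as an $\eps$-minimizer,
\[
\gT(\bsX,\widehat P)\le \gT(\bsX,P)+\eps\le \gT(\bsX,P,\widehat P)+\eps,
\]
and on the other hand $\gT(\bsX,\widehat P)\ge \gT(\bsX,\widehat P,P)$. Dividing by $n$ and inserting the deterministic bounds of \eqref{bound_test_TV_eq} applied to the pairs $(P,\widehat P)$ and $(\widehat P,P)$ together with the stochastic terms $Z(P,\widehat P)$ and $Z(\widehat P,P)$, one gets
\[
\dTV{P}{\widehat P}-\dTV{P\et}{\widehat P}\le \E_{P\et}\cro{t_{(P,\widehat P)}(X)}+Z(P,\widehat P)\le \dTV{P\et}{P}+Z(P,\widehat P)+\frac{\eps}{n},
\]
and symmetrically $-\dTV{P\et}{\widehat P}\le -\dTV{P}{\widehat P}+\dTV{P\et}{P}+Z(\widehat P,P)+\eps/n$ after using the lower bound in \eqref{eq-SPQ}; combining these kills the nuisance term $\dTV{P\et}{\widehat P}$ and yields
\[
\dTV{P}{\widehat P}\le 2\dTV{P\et}{P}+\Big(Z(P,\widehat P)+Z(\widehat P,P)\Big)+\frac{\eps}{n}.
\]
Both $Z(P,\widehat P)$ and $Z(\widehat P,P)$ are bounded by $\sup_{Q\in\sM}|Z(P,Q)|$, which is the VC-controlled quantity above; this gives \eqref{inthm_ubd_dist_p_phat}. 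Note the union over $D$ and over $P\in\overline\sO(D)$ is free here because, after fixing $P$ extremal, the supremum is already taken over all of $\sM$, so the event of probability $\ge1-e^{-\xi}$ does not depend on the particular $P$.

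Finally, the remaining three inequalities are formal consequences. For \eqref{Thm1_risk_bound_deviation_eq}, use the triangle inequality $\dTV{P\et}{\widehat P}\le\dTV{P\et}{P}+\dTV{P}{\widehat P}$ in \eqref{inthm_ubd_dist_p_phat}, take the infimum over $P\in\overline\sO(D)$ and then over $D\ge1$ (the $\sqrt{D/n}$ term is constant in $P$ so it passes through the inner infimum), giving the coefficient $3$; the $\varnothing$ convention handles the case $\overline\sO(D)=\varnothing$. For the expectation bounds \eqref{inthm_ubd_expt_dist_p_phat} and \eqref{Thm1_risk_bound_expecation_eq}, integrate the deviation inequalities: writing $Y$ for the relevant nonnegative quantity minus its deterministic part, $\P(Y>a+\sqrt{2(\xi+\log2)/n})\le e^{-\xi}$ for all $\xi>0$ gives $\E[Y]\le a+\int_0^\infty\P(Y>a+t)\,dt$, and the tail integral is of order $1/\sqrt n$; adding it to the $20\sqrt{5D/n}$ (resp.\ the infimum) term and rounding the constants upward produces $48\sqrt{D/n}$. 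I do not expect any difficulty in these last steps beyond tracking constants carefully enough to land on exactly $48$.
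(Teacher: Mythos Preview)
Your strategy coincides with the paper's: fix an extremal $\overline P$, combine the two inequalities of Lemma~\ref{bound_test_TV_lem} with the $\eps$-minimizer property, and bound the resulting centered process via the VC assumption (the paper invokes Proposition~3.1 of Baraud~\citeyearpar{Bar2016} for the constant $10\sqrt5$) together with bounded differences. But the execution as written contains errors. The inequality $\gT(\bsX,P)\le \gT(\bsX,P,\widehat P)$ is backwards; what one actually uses is $\gT(\bsX,\widehat P,P)\le \gT(\bsX,\widehat P)\le \gT(\bsX,P)+\eps$. Decomposing the leftmost term as $nZ(\widehat P,P)+n\,\E_{P^{\star}}[t_{(\widehat P,P)}(X)]$ and applying the \emph{lower} bound in \eqref{eq-SPQ}, while bounding $n^{-1}\gT(\bsX,P)\le\sup_{Q}Z(P,Q)+\dTV{P^{\star}}{P}$ via the \emph{upper} bound, yields
\[
\dTV{P}{\widehat P}\le 2\,\dTV{P^{\star}}{P}+\sup_{Q\in\sM}Z(P,Q)\;-\;Z(\widehat P,P)+\frac{\eps}{n},
\]
with a \emph{minus} in front of $Z(\widehat P,P)$; your two displayed lines and the step that ``kills the nuisance term $\dTV{P^{\star}}{\widehat P}$'' do not produce your stated conclusion.

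More substantively, $-Z(\widehat P,P)\le\sup_{Q}[-Z(Q,P)]$ is \emph{not} controlled by $\sup_{Q}|Z(P,Q)|$: the former is the centered empirical process over the class $\{\overline p>q\}_{q\in\cM}$, i.e.\ $\sC^{<}(\overline\cM,\overline p)$, whereas the latter is over $\{q>\overline p\}_{q\in\cM}$, i.e.\ $\sC^{>}(\overline\cM,\overline p)$. These are different classes, which is precisely why extremality requires \emph{both} to be VC. The paper handles this by introducing two separate quantities $\gZ_{+}$ and $\gZ_{-}$ and applying McDiarmid to each (with $\xi+\log 2$ in place of $\xi$, so that the union of the two events has probability $\ge 1-e^{-\xi}$ --- this is the origin of the $\log 2$). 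Finally, your claim that the high-probability event ``does not depend on the particular $P$'' is incorrect: both suprema above depend on the extremal point $P$. The paper's proof also fixes $\overline P$ throughout, so what the argument genuinely delivers is ``for each $P\in\overline\sO(D)$, with probability at least $1-e^{-\xi}$''; this weaker form is all that is needed for \eqref{Thm1_risk_bound_deviation_eq}--\eqref{Thm1_risk_bound_expecation_eq}.
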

\begin{proof}
The proof is postponed to Subsection \ref{subsect-proof-ell-estimator}.
\end{proof}
Let us now comment on this result. 
In the favourable situation where the $X_{i}$ are i.i.d.\ with distribution $P\et$ in $\overline \sO$,  $\inf_{\frD\ge 1}\inf_{P\in \overline \sO(\frD)}\dTV{P \et}{P}=0$ and we deduce from~\eref{Thm1_risk_bound_expecation_eq} that the  estimator $\widehat P$ converges toward $P\et$ at the rate $1/\sqrt{n}$ for the total variation distance. More precisely, the risk of the estimator is not larger than $48\sqrt{\frD/n}+\varepsilon/n$ when $P\et$ belongs to $\overline \sO(\frD)$ for some $\frD\ge 1$. {Note that the result also holds when the data are independent only, provided that $P\et=n^{-1}\sum_{i=1}^{n}P_{i}\et$ is extremal. As we shall see, this situation may arise even when none of the marginals $P_{i}\et$ is extremal or even belongs to the model $\sM$. }
In the general case where the data are  independent only and for all $i\in\{1,\ldots,n\}$, their marginals are of the form
\begin{equation}\label{eq-marginales}
P_{i}\et=(1-\alpha_{i})\overline P+\alpha_{i} R_{i}=\overline P+\alpha_{i}\pa{R_{i}-\overline P}
\end{equation}
for some $\overline P\in \overline \sO(\frD)$ with $\frD\ge 1$, $\alpha_{1},\ldots,\alpha_{n}$ in $[0,1]$ and distributions $R_{1},\ldots,R_{n}$ in $\sP$, we deduce from~\eref{inthm_ubd_expt_dist_p_phat} that 
\begin{align*}
\mathbb{E}\cro{\dTV{\overline P}{\widehat P}}&\le 2\dTV{P \et}{\overline P} + 48\sqrt{\frac{\frD}{n}} +\frac{\varepsilon}{n}\le \frac{2}{n}\sum_{i=1}^{n}\alpha_{i}+ 48\sqrt{\frac{\frD}{n}} +\frac{\varepsilon}{n}.
\end{align*}
As compared to the previous situation where $P_{i}\et=\overline P\in \overline \sO(\frD)$, hence $\alpha_{i}=0$ for all $i$, we see that the risk bound we get only inflates by the additional term $2\overline \alpha=(2/n)\sum_{i=1}^{n}\alpha_{i}$ and it remains thus of the same order when $\overline \alpha$ is small enough as compared to $\sqrt{\frD/n}$. Note that this situation may occur even when $\alpha_{i}>0$ for all $i$, i.e.\ when possibly none of the marginals $P_{i}\et$ belong to $\overline \sO(\frD)$. In order to be more specific, we may consider the two following situations. In the first one, there exists some subset of the data which are i.i.d.\ with distribution $\overline P\in \overline \sO(\frD)$ while the other part, corresponding to what we shall call {\em outliers}, are independently drawn according to some arbitrary distributions. In this case, there exists a subset $S\subset \{1,\ldots,n\}$ such that $\alpha_{i}=1$ for $i\in S$ and $\alpha_{i}=0$ otherwise in \eref{eq-marginales}. Our procedure is stable with respect to the presence of such outliers as soon as $\overline \alpha=|S|/n$ remains small as compared to $\sqrt{\frD/n}$. In the other situation, which is called the {\em contamination} case, the data are i.i.d., a portion $\alpha\in (0,1]$ of them are drawn according to an arbitrary distribution $R$ while the other part follows the distribution $\overline P\in \overline \sO(\frD)$. Then \eref{eq-marginales} holds with $\alpha_{i}=\alpha$ and $R_{i}=R$ for all $i\in\{1,\ldots,n\}$. The risk bound we get remains stable under contamination as long as the level $\overline \alpha=\alpha$ of contamination remains small as compared to $\sqrt{\frD/n}$. 

A bound similar to \eref{Thm1_risk_bound_deviation_eq} has been established in Baraud~\cite{BY-TEST} for his $\ell$-estimators. 
His inequality~(48) can be reformulated in our context as  
\begin{align*}
\frac{1}{n} \sum_{i = 1}^n \dTV{P_i\et}{\widehat P}\le & \inf_{\frD\ge 1}\cro{ 6\inf_{P\in \overline \sO(\frD)} \cro{\frac{1}{n} \sum_{i = 1}^n \dTV{P_i \et}{P}} + 40\sqrt{\frac{5\frD}{n}}} +2\sqrt{\frac{2\xi}{n}}+\frac{2\varepsilon}{n}
-\inf_{P\in\overline \sM}\cro{\frac{1}{n} \sum_{i = 1}^n \dTV{P_i \et}{P}}.
\end{align*}
In comparison, \eqref{inthm_ubd_dist_p_phat} together with the inequalities
\[
\frac{1}{n}\sum_{i=1}^{n}d(P_{i}\et,\widehat P)\le  \frac{1}{n}\sum_{i=1}^{n}d(P_{i}\et,P)+d(P,\widehat P)\quad \text{and}\quad d(P\et,P)\le \frac{1}{n}\sum_{i=1}^{n}d(P_{i}\et,P)
\]
which hold for all $P\in \overline \cO$, imply that
\begin{align*}
\frac{1}{n} \sum_{i = 1}^n \dTV{P_i\et}{\widehat P}& \le \inf_{\frD\ge 1}\cro{ 3\inf_{P\in \overline \sO(\frD)}\cro{\frac{1}{n} \sum_{i = 1}^n \dTV{P_i \et}{P}}+ 20\sqrt{\frac{5\frD}{n}}} 
+\sqrt{\frac{2(\xi+\log 2)}{n}}+\frac{\varepsilon}{n}.
\end{align*}
If we omit the term $\inf_{P\in\overline \sM}\cro{(1/n)\sum_{i = 1}^n \dTV{P_i \et}{P}}$ that appears in his inequality and $\log 2$ that appears in ours, all the constants we get are divided by a factor 2 as compared to his.

When the Yatracos class $\{\{p>q\}, p,q\in\overline \cM\}$ is VC with dimension not larger than $\frD\ge 1$, all the elements of $\overline \cM$ are extremal with degree not larger than $\frD$ and \eref{Thm1_risk_bound_expecation_eq} becomes
\begin{align}
\E\cro{\dTV{P \et}{\widehat P}}\leq 3\inf_{P\in \overline \sM}\dTV{P \et}{P} +48\sqrt{\frac{\frD}{n}} +\frac{\varepsilon}{n}.\label{Thm1_risk_bound_deviation_eq2}
\end{align}
In the special case where the data are i.i.d.\ with distribution $P\et$, an inequality of the same flavour was established by Devroye and Lugosi~\cite{MR1843146}[Section~8.2] for their minimum distance estimate. Both inequalities involve a constant 3 in front of the approximation term $\inf_{P\in \overline\sM}\dTV{P \et}{P}$. In our inequality all the numerical constants are explicit. 

In the next sections, we take advantage of the stronger inequality~\eref{Thm1_risk_bound_deviation_eq} to consider density models $\overline \cM$ for which the Yatracos classes $\{\{p>q\}, p,q\in\overline \cM\}$ are not VC. 

\subsection{First examples based on geometric properties}
The way we have characterized extremal elements in our Definition~\ref{df-extremal}
is combinatorial in nature. We provide here a sufficient condition which is rather geometric.  To do so, we start with some notations. Given a positive integer $k$, an element $\gx=(x_{1},\ldots,x_{k})\in\sX^{k}$ and $f\in\cF$, we denote by $f_{\gx}$ and $\cF_{\gx}$ the images of $f$ and $\cF$ respectively by the mapping $f\mapsto (f(x_{1}),\ldots,f(x_{k}))$ with values in $\R^{k}$. The vectors $f_{\gx}$ and the subsets $\cF_{\gx}\subset \R^{k}$ can be viewed as finite-dimensional projections of $f$ and $\cF$ on $\R^{k}$. We recall that the border $\partial \cF_{\gx}$ of $\cF_{\gx}$ is the set of elements that belong to the closure of $\cF_{\gx}$ but not to its interior (with respect to topology induced by the Euclidean distance in $\R^{k}$). Given an element $\overline f\in\cF$, a sufficient condition for the projection $\overline f_{\gx}$ to belong to $\partial \cF_{\gx}$ is that there exists a hyperplane through $\overline f_{\gx}$ that supports $\cF_{\gx}$. More precisely, this means that we can find a vector $\gu(\overline f_{\gx})=(u_{1}(\overline f_{\gx}),\ldots,u_{k}(\overline f_{\gx}))\in \R^{k}\setminus\{\0\}$ which is orthogonal to an hyperplane that contains $\overline f_{\gx}$ and which satisfies 
\begin{equation}\label{inter-geo1}
\scal{\gu(\overline f_{\gx})}{f_{\gx}-\overline f_{\gx}}=\sum_{i=1}^{k}u_{i}(\overline f_{\gx})\pa{f(x_{i})-\overline f(x_{i})}\le 0\quad \text{for all $f\in\cF$.}
\end{equation}
When $\cF$ is convex, so is $\cF_{\gx}\subset \R^{k}$, and \eref{inter-geo1} is actually equivalent to the fact that $\overline f_{\gx}\in \partial \cF_{\gx}$. Finally, we say that a point $\gx\in\sX^{k}$, with $k\ge 2$, is {\em $\cF$-separated} if for all $i,j\in\{1,\ldots,k\}$ with $i\ne j$, there exists $f\in\cF$ such that $f(x_{i})\ne f(x_{j})$. The following result holds. 
\begin{prop}\label{Prop-geo}
Let $\cF$ be a class of functions on a set $\sX$, $\overline f$ an element of $\cF$ and $D$ a positive integer. If for all $\cF$-separated $\gx\in\sX^{D+1}$, \eref{inter-geo1} is satisfied for a vector $\gu(\overline f_{\gx})\in \R^{D+1}\setminus\{\0\}$ whose coordinates are not all of the same sign, the element $\overline f$ is extremal in $\cF$ with degree not larger than $D$.
\end{prop}
In the above condition, 0 is assumed to have the same sign as any other number. When $\cF$ is convex, Proposition~\ref{Prop-geo} tells us that $\overline f$ is extremal in $\cF$ with degree not larger than $\frD=D$ if any of its $(D+1)$-projections $\overline f_{\gx}$ belong to the border of $\cF_{\gx}$ and for each $\gx\in\R^{D+1}$ one can find a vector $\gu=\gu(\overline f_{\gx})$ whose coordinates are not all of the same sign such that the affine hyperplane $\{\overline f_{\gx}+\gv|\; \gv\in \R^{D+1}, \scal{\gv}{\gu}=0\}$ supports $\cF_{\gx}$. It is actually enough to check that the condition holds for these $\gx\in\R^{D+1}$ which are $\cF$-separated since for those which are not, i.e.\ there exist $i\ne j$ with $f(x_{i})=f(x_{j})$ for all $f\in\cF$, the set $\cF_{\gx}$ is contained in the hyperplane $\{\gy\in\R^{D+1}, y_{i}-y_{j}=0\}$ so that $\cF_{\gx}\subset \partial\cF_{\gx}$ and $\overline f_{\gx}$ automatically belongs to the border of $\cF_{\gx}$. 

The assumptions of Proposition~\ref{Prop-geo} implies, in particular,  Property (P1): 
any $(D+1)$-dimensional projection $\overline f_{\gx}$ of $\overline f$ lies at the border of the projection $\cF_{\gx}$ of $\cF$. Being extremal is, however, not equivalent to (P1) nor is it equivalent to being an extreme point when $\cF$ is convex. For example, if $\cF$ is the (convex) set of nonincreasing functions on $\sX=[0,2]$, the function $\overline f=(3/4)\1_{[0,1]}+(1/4)\1_{(1,2]}$ is extremal, as we shall see, but not extreme in $\cF$ since it can also be written as the convex combination $(1/2)\cro{\1_{[0,1]}+(1/2)\1_{[0,2]}}$ where both functions $\1_{[0,1]}$ and $(1/2)\1_{[0,2]}$  belong to $\cF$. To prove in this example that $\overline f$ is extremal with degree not larger than $\frD=2$, it suffices to argue as follows. Given three distinct points $x_{1}<x_{2}<x_{3}$, at least two consecutive ones, say $z_{1}<z_{2}$, satisfies $\overline f(z_{1})=\overline f(z_{2})$ so that $(f-\overline f)(z_{2})- (f-\overline f)(z_{1})\le 0$ for all $f\in\cF$. Inequality \eref{inter-geo1} is therefore satisfied with $\gu(\overline f_{\gx})=(-1,+1,0)$ or $\gu(\overline f_{\gx})=(0,-1,+1)$, depending on $z_{1},z_{2}\in\{x_{1},x_{2},x_{3}\}$. Proposition~\ref{Prop-geo} then applies and we conclude that $\overline f$ is extremal with degree not larger than 2.

Conversely, an extreme element of a convex set of functions, or more generally an element $\overline f\in\cF$ that only satisfies (P1), might not be extremal. If $\cF$ is the class of nonnegative functions on $\R$, the function $\overline f$ which is identically equal to 0 is extreme in the convex set $\cF$ and clearly satisfies (P1). However, $\overline f$ is not extremal in $\cF$ since the class $\sC^{>}(\cF,\overline{f})=\{\{f>0\},\; f\in\cF\}$, which is that of all the subsets of $\R$ (take $f=\1_{B}$ with $B\subset \R$), is not VC. In this example, we also note that for such a function $\overline f$ inequality \eref{inter-geo1} is satisfied for all $\gx\in\sX^{2}$ with $\gu(\overline f_{\gx})=(-1,-1)$ although $\overline f$ is not extremal with degree not larger than 1. This shows in passing that our assumption on the signs of the coordinates of $\gu(\overline f_{\gx})$ in Proposition~\ref{Prop-geo} is necessary to guarantee the fact $\overline f$ is extremal.


\begin{proof}
Let us set $k=D+1$. According to Definition~\ref{df-extremal}, if $\overline f$ were not extremal in $\cF$ with degree not larger than $\frD=D$, there would exist a subset $\{x_{1},\ldots,x_{k}\}$ of $\sX$ that can be shattered by at least one of the classes of sets $\sC^{>}(\cF,\overline{f})$ and $\sC^{<}(\cF,\overline{f}) $. We only consider the case where this class is $\sC^{>}(\cF,\overline{f})$ since the proof is the similar for $\sC^{<}(\cF,\overline{f})$.  By definition,  $\sC^{>}(\cF,\overline{f})$ shatters $\{x_{1},\ldots,x_{k}\}$ if for each subset $I\subset\{1,\ldots,k\}$, there exists $f_{I}\in\cF$ such that
\[
I=\ac{i\in \{1,\ldots,k\}, f_{I}(x_{i})-\overline f(x_{i})>0}.
\]
In particular, the point $\gx=(x_{1},\ldots,x_{k})$ is necessarily $\cF$-separated. Since \eref{inter-geo1} is satisfied for this choice of $\gx$ and the set $I\et=\{i\in\{1,\ldots,k\},\; u_{i}(\overline f_{\gx})>0\}$ is nonempty, we may write that
\[
\sum_{i\in I\et}\pa{f_{I\et}(x_{i})-\overline f(x_{i})}u_{i}(\overline f_{\gx})\le -\sum_{i\not\in I\et}\pa{f_{I\et}(x_{i})-\overline f(x_{i})}u_{i}(\overline f_{\gx})
\]
with the convention $\sum_{\varnothing}=0$. By definition of $I\et$ and $f_{I\et}$, the right-hand side of this inequality is nonpositive while the left-hand side is positive. This leads to a contradiction and shows that $\sC^{>}(\cF,\overline{f})$ cannot shatter any subset of $\sX$ with cardinality larger than $D$.
\end{proof}

Let us now take some generic example to illustrate Proposition~\ref{Prop-geo}. Let $r$ be a positive integer, $\sX$ an open subset of $\R^{l}$, $l\ge 1$, $g$ a function on $\sX$ with values in a (nontrivial) interval $I\subset \R$, and $\Phi_{r}$ the class of functions $\phi$ defined on $I$ which satisfy for all $\gv=(v_{1},\ldots,v_{r+1})\in I^{r+1}$ such $v_{1}<\ldots<v_{r+1}$,  
\begin{equation}\label{eq-condphi}
L_{\gv}(\phi)=\left|\begin{array}{lllll}
1 & v_{1}&\ldots & v_{1}^{r-1}& \phi(v_{1}) \\ 
\vdots & \vdots&\ldots & \vdots& \vdots \\ 
1 & v_{r+1}&\ldots & v_{r+1}^{r-1}& \phi(v_{r}) \\ 
\end{array}\right|\ge 0.
\end{equation}
For example $\Phi_{1}$ is the class of nondecreasing functions on $I$ while $\Phi_{2}$ is the class of convex ones. Let us now define $\cF=\{f=\phi\circ g,\; \phi\in\Phi_{r}\}$ and for $d\ge 1$, let $\cF_{d}$ be the subset of $\cF$ which consists of these functions $\overline f$ of the form $\overline \phi \circ g$ where $\overline \phi\in\Phi_{r}$ satisfies the following property: the restrictions of $\overline \phi$ to the elements of a partition of $I$ into $d$ nontrivial intervals coincide with a polynomial of degree at most $r-1$. We shall see now that a consequence of Proposition~\ref{Prop-geo} is that the elements of $\cF_{d}$ are all 
extremal in $\cF$ with degrees not larger than $\frD=rd\ge 1$. To prove this, it suffices to prove that the requirements of the proposition are satisfied, which we shall now do. Let $k=rd+1$, $\overline f=\overline \phi \circ g\in\cF_{d}$ and $\gx\in\sX^{k}$ be a $\cF$-separable point. In  particular, $g(x_{1}),\ldots,g(x_{k})$ are distinct and we may relabel the coordinates of $\gx$, if ever necessary, in such a way that $v_{1}=g(x_{1})<\ldots<v_{k}=g(x_{k})$. Since $\overline \phi$ is based on a partition of $I$ into $d$ intervals and $k=rd+1$, one of these intervals contains at least $r+1$ (consecutive) numbers $v_{i}$, i.e.\ there exists $j\in\{1,\ldots,k-r\}$ for which $\overline \phi$ is a polynomial of degree at most $r-1$ on the interval $[v_{j},v_{j+r}]\subset I$. Setting $\gv=(v_{j},\ldots,v_{j+r})\in I^{r+1}$, we deduce from~\eref{eq-condphi}, the linearity of $L_{\gv}$  and the fact that $L_{\gv}(\overline \phi)=0$ that for all $f=\phi\circ g\in\cF$
\begin{align*}
0\le L_{\gv}(\phi-\overline \phi)&=(-1)^{r}\left|\begin{array}{llll}
1 & v_{j+1}&\ldots & v_{j+1}^{r-1}\\ 
\vdots & \vdots&\ldots & \vdots \\ 
1 & v_{j+r}&\ldots & v_{j+r}^{r-1}\\ 
\end{array}\right|\pa{\phi-\overline \phi}(v_{j})+\ldots+
\left|\begin{array}{llll}
1 & v_{j}&\ldots & v_{j}^{r-1}\\ 
\vdots & \vdots&\ldots & \vdots \\ 
1 & v_{j+r-1}&\ldots & v_{j+r-1}^{r-1}\\ 
\end{array}\right|\pa{\phi-\overline \phi}(v_{j+r})\\
&=(-1)^{r}\left|\begin{array}{llll}
1 & v_{j+1}&\ldots & v_{j+1}^{r-1}\\ 
\vdots & \vdots&\ldots & \vdots \\ 
1 & v_{j+r}&\ldots & v_{j+r}^{r-1}\\ 
\end{array}\right|\pa{f-\overline f}(x_{j})+\ldots+\left|\begin{array}{llll}
1 & v_{j}&\ldots & v_{j}^{r-1}\\ 
\vdots & \vdots&\ldots & \vdots \\ 
1 & v_{j+r-1}&\ldots & v_{j+r-1}^{r-1}\\ 
\end{array}\right|\pa{f-\overline f}(x_{j+r}).
\end{align*}
Since $v_{j}<\ldots<v_{j+r}$, these Vandermond determinants are all positive, $-L_{\gv}(\phi-\overline \phi)$ can be written in the form \eref{inter-geo1} where the coordinates of $\gu(\overline f_{\gx})$ are all zero except for those with indices $j,\ldots,j+r$. Besides, the signs of these nonzero coordinates are not constant. The requirements of Proposition~\ref{Prop-geo} are satisfied and the elements of $\cF_{d}$ are therefore extremal with degree at most $rd$.

Let us now see some of the consequences of this result and some preliminary applications of our Theorem~\ref{shape-estimation-th}. When $\mu$ is the Lebesgue measure on $\sX$, $\L_{1}(\sX,\sB(\sX),\mu)$ is separable, Assumption~\ref{Ass00} is therefore automatically satisfied and our Theorem~\ref{shape-estimation-th} applies to the subset $\overline \cM$ of $\cF$ that consists of densities. The set $\overline \cO\subset \overline \cM$ that consists of those extremal ones therefore contains $\bigcup_{d\ge 1}\pa{\cF_{d}\cap \overline \cM}$. For example, in dimension 1, when $r=1$, $\sX=(0,+\infty)$, $I=(-\infty,0)$ and $g:x\mapsto -x$, the set $\overline \cM$ is that of all nonincreasing densities on $\sX$ and $\overline \cO$ contains those which are piecewise constant on a partition of $\sX$ into a finite number of (nontrivial) intervals. When $r=2$, $I=\sX$ is a (nontrivial) interval of $\R$ and $g$ the identity function on $I$, the set $\overline \cM$ consists of the convex densities on $\sX$ and $\overline \cO$ contains those which are piecewise affine on a partition of $\sX$ into a finite number of (nontrivial) intervals. Denoting by $\scal{\cdot}{\cdot}$ the Euclidean inner product of $\R^{l}$ and $\ab{\cdot}$ the corresponding norm, 
our example also includes in higher dimensions, models of densities on a compact domain of $\R^{l}$ which are of the form $\gy\mapsto \phi\pa{\scal{\gy}{\gt}}$, for a given $\gt$ in the unit sphere of $\R^{l}$, or of the form  $\gy\mapsto \phi\pa{\ab{A\gy}}$ for a given $(l\times l)$-matrix $A$, taking for $g$ the mappings $\gy\mapsto \scal{\gy}{\gt}$ and $\gy\mapsto A\gy$ respectively and functions $\phi$ that run among a set of monotone, convex or concave functions on the line. 

Nevertheless, these examples may appear too restrictive for practical purposes, especially in dimensions larger than 1. These restrictions are due to the fact that the geometric-type assumption required in Proposition~\ref{Prop-geo} is fulfilled for specific convex sets only although we typically wish to consider density models which are not convex. Fortunately, this assumption is sufficient but not necessary and our Definition~\ref{df-extremal} allows us to find extremal elements in sets that are possibly non-convex. The most interesting ones, nevertheless, arise in dimension $l=1$. In the remaining part of this paper we shall therefore restrict ourselves to the problem of estimating a density (with respect to the {\em Lebesgue measure}) on the line. Each of the forthcoming sections is devoted to  the problem of estimating densities or a mixture of densities that are monotone, concave, convex, $\log$-concave or more generally $s$-concave. None of these density models is convex.

\section{Estimating a piecewise monotone density}\label{sect-kmono}
We denote by $\cA(k)$ the class of nonempty subsets $A\subset \R$ with cardinality not larger than $k\ge 1$. The elements of $A$ provide a partition of $\R$ into $l=|A|+1\le k+1$ intervals $I_{1},\ldots,I_{l}$ the endpoints of which belong to $A$. We denote by $\gI(A)$ the set $\{\mathring{I}_{1},\ldots,\mathring{I}_{l}\}$ of their interiors. Although there exist several ways of partitioning $\R$ into intervals with endpoints in $A$, the set $\gI(A)$ is uniquely defined.  

\subsection{Piecewise monotone densities}

\begin{df}
Let $k\ge 2$. A function $g$ on $\R$ is said to be $k$-piecewise monotone if there exists $A\in\cA(k-1)$ such that the restriction of $g$ to each interval $I\in\gI(A)$ is monotone. In particular, there exist at most $k$ monotone functions $g_{I}$ on $I\in \gI(A)$ such that  
\[
g(x)=\sum_{I\in \gI(A)}g_{I}(x)\1_{I}(x)\quad \text{for all $x\in \R\setminus A$.}
\]
\end{df}
A $k$-piecewise monotone function $g$ associated to $A\in\cA(k-1)$ may not be monotone on each element of a partition based on $A$. We only require that $g$ be monotone on the interiors of these elements. The function  
\begin{displaymath}
g\,:\,\Bigg|
\begin{array}{rcl}
\R&\longrightarrow&\R_+\vspace{2pt}\\
x&\longmapsto &\frac{1}{\sqrt{|x|}}\1_{|x|>0}
\end{array}
\end{displaymath}
is 2-piecewise monotone, associated to $A=\{0\}\in\cA(1)$, but $g$ is neither monotone on $(-\infty, 0]$ nor on $[0,+\infty)$.

We denote by $\overline \cM_{k}$ the set of $k$-piecewise monotone densities. These sets obviously satisfy 
$\overline \cM_{j}\subset \overline \cM_{k}$ for $j\le k$ and form thus an increasing sequence with respect to set inclusion. The set $\overline \cM_{2}$ contains the unimodal densities on the line and in particular all the densities that are monotone on a half-line and vanish elsewhere. 

Of special interest are those densities in $\overline \cM_{k}$ which are also piecewise constant on a finite partition of $\R$ into intervals. More precisely, for $D\ge 1$ let $\overline \cO_{D,k}$ be the subset of $\overline \cM_{k}$ that consists of those densities that are constant on each element of a class $\gI(A)$ with $A\in\cA(D+1)$. The number $D$ is a majorant of the number of bounded intervals in the class $\gI(A)$, hence on the number of positive values that a density in $\overline \cO_{D,k}$ may take. The uniform distribution on a nontrivial interval has a density that belongs to $\overline \cO_{1,2}$, and also to $\overline \cO_{D,k}$ for all $D\ge 1$ and $k\ge 2$.  The sets  $\overline \cO_{D,k}$ with $D\ge 1$ and $k\ge 2$ are therefore nonempty. They satisfy the following property which is a consequence of Proposition~3 of Baraud and Birg\'e~\cite{MR3565484}.
\begin{prop}\label{prop-VC-monotone}
For all $D\ge 1$ and $k\ge 2$, all the elements of $\overline \cO_{D,k}$ are extremal in $\overline \cM_{k}$ with degree not larger than $3(k+D+1)$.
\end{prop}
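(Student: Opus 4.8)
The plan is to reduce the statement to an explicit bound on the VC dimension of the two classes of sublevel/superlevel sets $\sC^{>}(\overline\cM_{k},\overline p)$ and $\sC^{<}(\overline\cM_{k},\overline p)$ when $\overline p\in\overline\cO_{D,k}$. Fix such a $\overline p$, associated to a partition $\gI(A_{0})$ with $A_{0}\in\cA(D+1)$, and fix an arbitrary $q\in\overline\cM_{k}\setminus\{\overline p\}$, associated to some $A\in\cA(k-1)$ on whose pieces $q$ is monotone. The key structural observation is that the set $\{q>\overline p\}$ (and likewise $\{q<\overline p\}$) can be described interval by interval: refine the real line using the common refinement of $\gI(A_{0})$ and $\gI(A)$, which has at most $(D+2)+(k-1)=D+k+1$ breakpoints, hence at most $D+k+2$ open intervals. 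On each such interval $J$, $\overline p$ is constant and $q$ is monotone, so $\{q>\overline p\}\cap J$ is a subinterval of $J$ (possibly empty or all of $J$) — an "up-then-down" type set in the monotone case reduces to a single half-interval because one side is constant. Consequently $\{q>\overline p\}$ is a union of at most $D+k+2$ intervals, i.e.\ a set whose boundary consists of at most $2(D+k+2)$ points. This is exactly the situation handled by Proposition~3 of Baraud and Birg\'e~\citeyearpar{MR3565484}, which controls the VC dimension of a class of sets each of which is a union of a bounded number of intervals.

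First I would state precisely, from Baraud and Birg\'e~\citeyearpar{MR3565484}[Proposition~3], the bound: the class of all subsets of $\R$ that are unions of at most $m$ intervals is VC with dimension at most $2m+1$ (or whatever the precise constant there is), and then verify that $D+k+2$ plays the role of $m$ here, possibly with a small adjustment to account for the fact that both $\{q>\overline p\}$ and $\{q<\overline p\}$ must be simultaneously covered. Matching the claimed degree $3(k+D+1)$ then amounts to checking that $2(D+k+2)+1\le 3(k+D+1)$, i.e.\ $2D+2k+5\le 3D+3k+3$, which holds as soon as $D+k\ge 2$ — always true since $D\ge1$, $k\ge2$. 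Then I would note that the class $\sC^{>}(\overline\cM_{k},\overline p)$ is contained in the class of unions of at most $D+k+2$ intervals, so its VC dimension is bounded by that of the larger class; the same argument applies verbatim to $\sC^{<}(\overline\cM_{k},\overline p)$, using $\{q<\overline p\}$ in place of $\{q>\overline p\}$.

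The one point requiring care — and the main obstacle — is the interval-by-interval analysis of $\{q>\overline p\}$: on a piece $J$ where $q$ is monotone and $\overline p\equiv c$, the set $\{q>c\}\cap J$ is indeed an interval, but at the finitely many breakpoints one must be careful about the values of $q$ and $\overline p$ (they are only defined up to a Lebesgue-null set, and the definition of $k$-piecewise monotone only constrains $q$ on the interiors of its pieces). Since VC dimension is insensitive to the behaviour on a finite set of points, and since we may work with the countable dense subsets $\cO_{D,k}\subset\overline\cO_{D,k}$ and $\cM_{k}\subset\overline\cM_{k}$ as the paper does, this is harmless; I would simply remark that adding or removing the finitely many breakpoints changes each set $\{q>\overline p\}$ by a finite set and hence does not affect membership in the class of unions of $\le D+k+2$ intervals nor the VC dimension. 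With that caveat dispatched, the proposition follows by citing Proposition~3 of Baraud and Birg\'e~\citeyearpar{MR3565484} and the elementary inequality above.
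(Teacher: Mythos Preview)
Your plan is correct and is exactly what underlies Proposition~3 of Baraud and Birg\'e~\citeyearpar{MR3565484}, which is all the paper invokes for this result. Two small corrections are worth noting. First, the Baraud--Birg\'e result that converts ``each set is a union of at most $m$ intervals'' into a VC bound (with dimension $2m$) is their Lemma~1, not their Proposition~3; the latter already packages the full extremality statement for piecewise constant versus piecewise monotone densities, which is why the present paper can simply cite it. Relatedly, you overcounted $|A_0|$ by one: $A_0\in\cA(D+1)$ gives $|A_0|\le D+1$, hence at most $D+k$ breakpoints and $D+k+1$ intervals in the common refinement, yielding degree $2(D+k+1)\le 3(D+k+1)$.

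Second, your dismissal of the endpoint issue is not quite right as stated: the claim that ``VC dimension is insensitive to the behaviour on a finite set of points'' is false when the finite set \emph{varies} with $q$ (the class of all finite sets has infinite VC dimension). The clean way to handle the $k-1$ moving breakpoints of $q$ and the $D+1$ fixed breakpoints of $\overline p$ is to argue as in the paper's proof of Proposition~\ref{prop_extr_cvx_cve}: partition $\R$ into half-open intervals so that every point, including each breakpoint, belongs to exactly one piece on which the relevant monotonicity or constancy can be arranged to hold (this may require fixing a left- or right-continuous representative). With that bookkeeping the interval count stays at $D+k+1$ and the bound follows.
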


For all $D\ge 1$ and $k\ge 2$, let $\cO_{D,k}$ be a countable and dense subset of $\overline \cO_{D,k}$ (for the $\L_{1}$-norm) and $\cM_{k}$ a countable and dense subset of $\overline \cM_{k}$ that contains $\bigcup_{D\ge 1}\cO_{D,k}$. It follows from Proposition~\ref{prop-VC-monotone} that the elements of $\cO_{D,k}$ are also extremal in $\cM_{k}$ with degree not larger than $3(k+D+1)$ for all $D\ge 1$. We immediately deduce from Theorem~\ref{shape-estimation-th} the following result. 

\begin{thm}\label{thm-monotone}
Let $k\ge 2$. For every product distribution $\gP\et$ of the data, a TV-estimator $\widehat P$ on $\sM_{k}$ satisfies 
\begin{equation}\label{eq-Thm2}
\mathbb{E}\cro{\dTV{P \et}{\widehat P}} \leq  \inf_{D\ge 1}\cro{3\inf_{P\in\overline \sO_{D,k}}\dTV{P \et}{P} + 83.2\sqrt{\frac{D+k+1}{n}}}+\frac{\varepsilon}{n}.
\end{equation}
\end{thm}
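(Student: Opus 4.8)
The plan is to obtain Theorem~\ref{thm-monotone} as a direct specialization of Theorem~\ref{shape-estimation-th} to the density model $\overline\cM=\overline\cM_{k}$, using Proposition~\ref{prop-VC-monotone} to control the degrees of extremal points. First I would recall that, by Proposition~\ref{prop-VC-monotone}, every element of $\overline\cO_{D,k}$ is extremal in $\overline\cM_{k}$ with degree not larger than $3(k+D+1)$; equivalently, $\overline\cO_{D,k}\subset\overline\cO\bigl(3(k+D+1)\bigr)$ in the notation of Theorem~\ref{shape-estimation-th}. Since the $\overline\cO_{D,k}$ are nonempty for all $D\ge1$ and $k\ge2$ (they contain the uniform density on a nontrivial interval), the set $\overline\cO$ of all extremal points of $\overline\cM_{k}$ is nonempty, so Theorem~\ref{shape-estimation-th} applies to $\sM_{k}$.

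Next I would invoke the expectation bound~\eqref{inthm_ubd_expt_dist_p_phat} from Theorem~\ref{shape-estimation-th}: for every $D'\ge1$ and every $P\in\overline\sO(D')$,
\[
\E\cro{\dTV{P}{\widehat P}}\le 2\dTV{P\et}{P}+48\sqrt{\frac{D'}{n}}+\frac{\varepsilon}{n}.
\]
Apply this with $D'=3(k+D+1)$ for an arbitrary $D\ge1$ and an arbitrary $P\in\overline\sO_{D,k}$; this is legitimate because $\overline\sO_{D,k}$ is contained in $\overline\sO\bigl(3(k+D+1)\bigr)$. Combining with the triangle inequality $\dTV{P\et}{\widehat P}\le\dTV{P\et}{P}+\dTV{P}{\widehat P}$ and taking expectations gives
\[
\E\cro{\dTV{P\et}{\widehat P}}\le 3\dTV{P\et}{P}+48\sqrt{\frac{3(k+D+1)}{n}}+\frac{\varepsilon}{n}.
\]

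It then remains to simplify the constant: $48\sqrt{3}=48\sqrt3\le 83.2$ since $\sqrt3<1.7334$ and $48\times1.7334<83.21$; one checks $83.2^{2}=6922.24$ while $48^{2}\times3=6912$, so the bound $48\sqrt{3}\le83.2$ holds. Hence the right-hand side is at most $3\dTV{P\et}{P}+83.2\sqrt{(D+k+1)/n}+\varepsilon/n$. Since $P\in\overline\sO_{D,k}$ was arbitrary, we may take the infimum over $P\in\overline\sO_{D,k}$, and since $D\ge1$ was arbitrary, the infimum over $D$, yielding~\eqref{eq-Thm2}. There is no real obstacle here: the only points requiring care are making sure the degree substitution $D\mapsto 3(k+D+1)$ is propagated correctly through~\eqref{inthm_ubd_expt_dist_p_phat} and that the numerical constant $48\sqrt3\le 83.2$ is verified; the density model $\overline\cM_{k}$ automatically satisfies Assumption~\ref{Ass00} because $\L_{1}(\R,\sB(\R),\mu)$ is separable.
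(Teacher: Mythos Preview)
Your proposal is correct and follows essentially the same approach as the paper, which simply states that Theorem~\ref{thm-monotone} is immediately deduced from Theorem~\ref{shape-estimation-th} via Proposition~\ref{prop-VC-monotone}. Your proof is in fact more detailed than the paper's: you spell out the substitution $D'=3(k+D+1)$, the use of~\eqref{inthm_ubd_expt_dist_p_phat} together with the triangle inequality (equivalently, one could invoke~\eqref{Thm1_risk_bound_expecation_eq} directly), and the numerical check $48\sqrt{3}\le 83.2$.
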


Our approach solves the problem of estimating a nonincreasing density on a half-line $I$ by taking $k=2$. For this specific problem, a bound of the same flavour was established by Baraud~\cite{BY-TEST} (see Proposition~6) for his $\ell$-estimator. When the data are i.i.d.\ with distribution $P\et$, our result shows better constants in the risk bound~\eref{eq-Thm2} as compared to Baraud's.
In particular, the constant 3 in front of the approximation term $\inf_{P\in\overline \sO_{D,k}}\dTV{P \et}{P}$ improves on his constant 5. Our estimator (as well as Baraud's) also improves on the Grenander estimator since our construction does not require the prior knowledge of the half-line $I$. 

For general values of $k\ge 2$, the problem of estimating a $k$-piecewise monotone density was also considered in Baraud and Birg\'e~\cite{MR3565484}. The authors used a $\rho$-estimator and the squared Hellinger loss in place of the total variation one to evaluate its risk--- see their Corollary~2. Their bound is similar to ours except for the fact that the quantity $\sqrt{(D+k+1)/n}$ appears there multiplied by a logarithmic factor. This logarithmic factor turns out to be necessary when one deals with the squared Hellinger loss while it disappears with  the total variation one. 

Up to the additional term $\eps/n$, the risk bound that we establish for the TV-estimator is given by the quantity
\[
A=\inf_{D\ge 1}\cro{3\inf_{P\in\overline \sO_{D,k}}\dTV{P \et}{P} + 83.2\sqrt{\frac{D+k+1}{n}}}.
\]
If we apply a location-scale transformation to the data $\etc{X}$, that is, if in place of the original data $X_{1},\ldots,X_{n}$ we observe the random variables $Y_{i}=\sigma X_{i}+m$ for $i\in\{1,\ldots,n\}$ with $(m,\sigma)\in\R\times (0,+\infty)$, the marginal distributions $P_{i}\et$ of the $X_{i}$ are then replaced by their images $Q_{i}\et$ by the mapping $x\mapsto  \sigma x+m$. This mapping leaves  the sets $\overline \sO_{D,k}$ and the total variation distance between two probabilities invariant. The quantity $A$ remains thus unchanged under such a transformation and the performance of the TV-estimator is therefore independent of the unit that is used to measure the data. 

Another consequence of Theorem~\ref{thm-monotone} is the following one. If the $X_{i}$ are not i.i.d.\ but only independent and their distributions are close enough to a probability $\overline P$ which admits a density $\overline p$ with respect to $\mu$, by using the triangle inequality we may bound $A$  by $3\dTV{P \et}{\overline P}+\B_{k,n}(\overline p)$ where $\B_{k,n}(p)$ is defined for a density $p$ on the line by 
\begin{align}
\B_{k,n}(p)&=\inf_{D\ge 1}\cro{3\inf_{Q\in\overline \sO_{D,k}}\dTV{P}{Q} + 83.2\sqrt{\frac{D+k+1}{n}}}\nonumber\\
&=\inf_{D\ge 1}\cro{\frac{3}{2}\inf_{q\in\overline \cO_{D,k}}\norm{p-q}+ 83.2\sqrt{\frac{D+k+1}{n}}}.\label{eq-defB}
\end{align}
This means that as long as $3\dTV{P \et}{\overline P}$ remains small as compared to $\B_{k,n}(\overline p)$, 
the bound on $\E[d(P \et,\widehat P)]$ would be almost the same as if the $X_{i}$ were truly i.i.d.\ with distribution $\overline P$. This property accounts for the robustness of our approach. 

In the remaining part of this section, our aim is to specify the order of magnitude of the quantity $\B_{k,n}(p)$ under different {\em a posteriori} assumptions on the density $p$. Note that the quantity $\B_{k,n}(p)$ also remains invariant under a location-scale transformation of the density $p$. 

\subsection{Estimation of bounded and compactly supported $k$-piecewise monotone densities}

For $k\geqslant 3$, let $\overline\cM_{k}^{\infty}$ be the subset of $\overline \cM_{k}$ that consists of the densities on $\R$ which coincide almost everywhere with a density of the form
\begin{equation}\label{def-mbark}
p=\sum_{i=1}^{k-2}w_{i}p_{i}\1_{(x_{i-1},x_{i})}
\end{equation}
where 
\begin{itemize}
\item[(i)] $(x_{i})_{i\in\{0,\ldots,k-2\}}$ is an increasing sequence of real numbers;
\item[(ii)] $w_{1},\ldots,w_{k-2}$ are nonnegative numbers such that $\sum_{i=1}^{k-2}w_{i}=1$;
\item[(iii)] for $i\in\{1,\ldots,k-2\}$, $p_{i}$ is a monotone density on the interval $I_{i}=(x_{i-1},x_{i})$ of length $L_{i}>0$ with variation  
\[
V_{i}=\sup_{x\in I_{i}}p_{i}(x)-\inf_{x\in I_{i}}p_{i}(x)<+\infty.
\]
\end{itemize}
A density $p$ in $\overline\cM_{k}^{\infty}$ is necessarily bounded and compactly supported. A monotone density on $\R_{+}$ which is bounded and compactly supported belongs to $\overline \cM_3^{\infty}$. A bounded unimodal density supported on a compact interval belongs to $\overline \cM_4^{\infty}$. 

For $p\in\overline\cM_{k}^{\infty}$, we set 
\begin{equation}\label{def-Rp}
\bs{R}_{k,0}(p)=\inf \cro{\sum_{i=1}^{k-2}\sqrt{w_{i}\log\pa{1+L_{i}V_{i}}}}^{2}
\end{equation}
where the infimum runs among all ways of writing $p$ in the form~\eref{def-mbark} a.e. Note that we allow some of the $w_{i}$ to be zero in which case the corresponding densities $p_{i}$ may be chosen arbitrarily and their choices do not contribute to the value of $\bs{R}_{k,0}(p)$. For $k\ge 3$ and $R>0$, let $\overline \cM_{k}^{\infty}(R)$ be the subset of $\overline\cM_{k}^{\infty}$ that consists of these densities $p$ for which $\bs{R}_{k,0}(p)< R$. When a density $p$ belongs to  $\overline\cM_{k}^{\infty}(R)$, we may therefore write $p$ in the form~\eref{def-mbark} a.e.\ with  $L_{i}$ and $V_{i}$ such that $\cro{\sum_{i=1}^{k-2}\sqrt{w_{i}\log\pa{1+L_{i}V_{i}}}}^{2}<R$. The sequences of sets $(\overline\cM_{k}^{\infty})_{k\ge 3}$ and $(\overline\cM_{k}^{\infty}(R))_{k\ge 3}$ are both increasing with respect to set inclusion.  
To see that a density $p\in \overline\cM_{l}^{\infty}(R)$ belongs to $p\in \overline\cM_{k}^{\infty}(R)$ when $l<k$, it suffices to introduce $k-l$ extra numbers $x_{l-1}<\ldots <x_{k-2}$ which are larger than $x_{l-2}$ and for $i\in\{l-2,\ldots, k-1\}$, consider the uniform density $p_{i}$ on $(x_{i},x_{i+1})$ with the weight $w_{i}=0$. 
%
%
%
It is not difficult to check that the set $\overline \cM_{k}^{\infty}(R)$ is invariant under a location-scale transformation.

\begin{thm}\label{thm-3}
Let $k\geqslant 3$ and $R>0$. If $p\in\overline \cM_{k}^{\infty}(R)$,  
\begin{equation}\label{eq-thm-3}
\B_{k,n}(p)\le 41.3\pa{\frac{R}{n}}^{1/3}+83.2\sqrt{\frac{2k}{n}}.
\end{equation}
\end{thm}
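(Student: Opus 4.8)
The plan is to upper-bound, for a well-chosen integer $D$, the approximation term $\inf_{q\in\overline \cO_{D,k}}\norm{p-q}$ that enters the definition~\eref{eq-defB} of $\B_{k,n}(p)$, and then to optimise the resulting expression over $D$. Since $p\in\overline \cM_{k}^{\infty}(R)$ exactly means $\bs{R}_{k,0}(p)<R$, I would first freeze a representation $p=\sum_{i=1}^{k-2}w_{i}p_{i}\1_{(x_{i-1},x_{i})}$ as in~\eref{def-mbark}, with $p_{i}$ a monotone density on the interval $I_{i}$ of length $L_{i}$ and variation $V_{i}$, chosen so that, putting $c_{i}:=w_{i}\log(1+L_{i}V_{i})\ge 0$ and $\Sigma:=\sum_{i}\sqrt{c_{i}}$, one has $\Sigma^{2}<R$. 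The one approximation ingredient needed is a Birgé-type result for monotone densities: a monotone density on an interval of length $L$ with variation $V$ can be approximated in $\L_{1}$-distance within $\log(1+LV)/m$ by a monotone density constant on $m$ intervals; rescaling by $w_{i}$, the block $w_{i}p_{i}\1_{I_{i}}$ is then approximated on $I_{i}$, within $c_{i}/m_{i}$, by $w_{i}$ times such an $m_{i}$-piece object (which integrates to $w_i$ over $I_i$, so the glued function will remain a density).

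Given an integer $D\ge k-1$, I would set $N:=D-k+2\ge 1$ and allocate $m_{i}=\lceil N\sqrt{c_{i}}/\Sigma\rceil$ pieces to the $i$-th block when $c_{i}>0$, and $m_{i}=1$ when $c_{i}=0$. Gluing the block approximations yields a density $q$ that is monotone on each $\mathring I_{i}$ — hence $k$-piecewise monotone, with breakpoints among $x_{0},\dots,x_{k-2}$ — and piecewise constant, and since $m_{i}\ge N\sqrt{c_{i}}/\Sigma$ (the allocation $m_i\propto\sqrt{c_i}$ being the one dictated by Cauchy–Schwarz),
\[
\norm{p-q}\le\sum_{i:\,c_{i}>0}\frac{c_{i}}{m_{i}}\le\frac{\Sigma}{N}\sum_{i}\sqrt{c_{i}}=\frac{\Sigma^{2}}{N}<\frac{R}{N}.
\]
Counting the constancy intervals of $q$ — at most $N+\Card\{i:c_{i}>0\}$ step pieces inside the blocks, plus the $\Card\{i:c_{i}=0\}=k-2-\Card\{i:c_{i}>0\}$ blocks carrying no mass, plus the two unbounded tails on which $q=0$ — gives at most $D$ bounded ones, so $q\in\overline \cO_{D,k}$. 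Hence $\inf_{q\in\overline \cO_{D,k}}\norm{p-q}<R/(D-k+2)$ for every $D\ge k-1$; feeding this into~\eref{eq-defB} and writing $t=D-k+2$,
\[
\B_{k,n}(p)\le\inf_{t\ge 1}\cro{\frac{3R}{2t}+83.2\sqrt{\frac{t+2k-1}{n}}}.
\]

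To conclude I would balance the two terms. Let $t^{\star}=(3R\sqrt n/83.2)^{2/3}$ be the minimiser of $t\mapsto \tfrac{3R}{2t}+83.2\sqrt{t/n}$, whose minimal value equals $\tfrac32\,3^{1/3}\,83.2^{2/3}(R/n)^{1/3}\le 41.3\,(R/n)^{1/3}$. If $t^{\star}\ge 1$, take $t=t^{\star}$ and bound $\sqrt{(t^{\star}+2k-1)/n}\le\sqrt{t^{\star}/n}+\sqrt{2k/n}$, which gives~\eref{eq-thm-3}. If $t^{\star}<1$, i.e.\ $R<83.2/(3\sqrt n)$, take $t=1$, which gives $\tfrac32 R+83.2\sqrt{2k/n}$, and one checks $\tfrac32 R\le 41.3\,(R/n)^{1/3}$ throughout that range; again~\eref{eq-thm-3} follows.

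The only genuinely delicate step is the Birgé-type approximation lemma with the sharp constant $1$ in front of $\log(1+LV)/m$, realised by a non-uniform, mass-balanced partition of the interval exploiting monotonicity: it is this constant, together with the value $83.2$ built into $\B_{k,n}$, that makes the leading constant come out as $41.3$ (and the precise form of this lemma for small $m$ is where some care is required). Everything else — the Cauchy–Schwarz allocation of pieces, the bookkeeping guaranteeing that $D$ bounded constancy intervals suffice so that no extra multiple of $k$ leaks into the bound, and the final one-dimensional optimisation — is routine.
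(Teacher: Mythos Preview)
Your approach is the paper's: write $p$ in block form, apply a Birg\'e-type piecewise-constant approximation on each block, allocate pieces per block via the Cauchy--Schwarz-optimal rule $m_i\propto\sqrt{w_i\log(1+L_iV_i)}$, and balance the resulting $O(R/D)$ approximation error against the $O(\sqrt{D/n})$ complexity term. Your bookkeeping with $N=D-k+2$ is equivalent to the paper's count $\sum_iD_i\le D+k-2$, which places the approximant in $\overline\cO_{D+k-2,k}$. (Minor slip: blocks with $c_i=0$ need not ``carry no mass''---they may be uniform with $w_i>0$---but they still contribute a single constancy interval, so your count is fine.)

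The one genuine gap is the Birg\'e constant. The paper's Proposition~\ref{prop-birge} gives
\[
\int_I|p-\overline p|\,d\mu\;\le\;[(1+VL)^{1/D}-1]\wedge 2\;\le\;\frac{2\log(1+VL)}{D},
\]
i.e.\ constant $2$ (the sharpest constant extractable from that bound is $2/\log 3\approx 1.82$, attained at $z=\log 3$ where $(e^z-1)\wedge 2=2$). You assume constant $1$; but since $e^z-1>z$ for every $z>0$, the paper's geometric partition can never realise it, and a constant-$1$ version would require a different construction that neither you nor the paper provides. With the proven constant $2$ the optimisation yields a leading coefficient $\tfrac{3}{2}\cdot 6^{1/3}\cdot 83.2^{2/3}\approx 52$, not $41.3$. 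In fact the paper's own derivation contains the matching slip, passing from $\tfrac{3}{2}\cdot\tfrac{2R}{D}$ to $\tfrac{3R}{2D}$ where $\tfrac{3R}{D}$ is what Proposition~\ref{prop-birge} actually gives; so the value $41.3$ in the statement is not supported by the proof as written either. Your instinct that this constant is the crux is correct, but calling it merely ``delicate'' understates the issue: as stated it is unproven.
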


This result is to our knowledge new in the literature.  We deduce that the minimax risk for the $\L_{1}$-norm over $\overline \cM_{k}^{\infty}(R)$ is not larger than $(R/n)^{1/3}\vee (k/n)^{1/2}$ up to a  positive multiplicative constant. For large enough values of $n$, the bound is of order $(R/n)^{1/3}$ while for moderate ones and values of $R$ which are close enough to 0, which means that the densities in $\overline \cM_{k}^{\infty}(R)$ are close to a mixture of $k-2$ uniform distributions, the bound is of order $\sqrt{k/n}$.  

It is interesting to compare this result to that established in Baraud and Birg\'e~\cite{MR3565484}[page 3900] for their $\rho$-estimators. For the problem of estimating a bounded unimodal density $p$ supported on an interval of length $L$, which is an element of $\overline \cM_{4}$, Baraud and Birg\'e show that the Hellinger risk of the $\rho$-estimator is not larger than $(\sqrt{L\norm{p}_{\infty}}/n)^{1/3}\log n$ up to some numerical constant. With our TV-estimator, the bound we get is of order  
$(\log(1+L\norm{p}_{\infty})/n)^{1/3}$ and therefore only depends logarithmically on the quantity $L\norm{p}_{\infty}$ (which is not smaller than 1 since $p$ is a density).  

\begin{proof}
The proof of Theorem \ref{thm-3} is based on \eqref{eq-defB} and the following approximation result. The complete proof is postponed to subsection \ref{subsect-proof-kmono}.
\end{proof}

\begin{prop}\label{prop-birge}
Let $V\ge 0$ and $I$ be a bounded interval of length $L>0$. For all $D\ge 1$, there exists a partition $\cJ=\cJ(D,L,V)$ of $I$ into $D\ge 1$ nontrivial intervals with the following properties. 
For any monotone density $p$  on $I$ for which 
\[
V_{I}(p)=\sup_{x\in \mathring I}p(x)-\inf_{x\in \mathring I}p(x)\le V, 
\]
the function $\overline p=\overline p(\cJ)$ defined by 
\[
\overline p=\sum_{J\in\cJ}\overline p_{J}\1_{J}\quad \text{with}\quad \overline p_{J}=\frac{1}{\mu(J)}\int_{J}p\;d\mu, 
\]
is a monotone density on $I$ that satisfies
%
\begin{equation}\label{eq-lem-birge}
\int_{I}\ab{p-\overline p}d\mu\le \cro{(1+VL)^{1/D}-1}\wedge 2\le \frac{2\log\pa{1+VL}}{D}.
\end{equation}
\end{prop}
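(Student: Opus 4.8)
The plan is to build the partition $\cJ$ by equalizing, across its $D$ intervals, the oscillation (total variation) of the "worst-case" monotone density of variation $V$ on $I$, and then to control the $\L_1$-approximation error of the piecewise-mean $\overline p$ on each piece separately. First I would reduce to the canonical case: by a location-scale transformation (under which both sides of \eqref{eq-lem-birge} are invariant) we may assume $I=(0,L]$ and, after possibly reflecting, that $p$ is nonincreasing. Write $p(0^+)=M$ and $p(L^-)=m$, so $M-m=V_I(p)\le V$ and, since $p$ is a density on an interval of length $L$, the whole mass is $1$; the key elementary fact is that on any subinterval $J$ the error $\int_J|p-\overline p_J|\,d\mu$ is bounded by $\mu(J)\cdot(\sup_J p-\inf_J p)$, i.e.\ by the product of the length and the oscillation of $p$ on $J$, and is also trivially bounded by $2\int_J p\,d\mu$. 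So the total error is at most $\sum_{J\in\cJ}\mu(J)\,\mathrm{osc}_J(p)$.

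The heart of the construction is to choose $\cJ=\cJ(D,L,V)$ independently of $p$ so that this sum is small for every admissible $p$ simultaneously. The classical trick (this is exactly Birgé's argument) is to choose the breakpoints so that the "extremal" density $p^*(x)=c\,(x+a)^{-1}$ — the nonincreasing density on $(0,L]$ with $p^*(0^+)/p^*(L^-)=1+VL/\text{(something)}$, tuned so that $p^*(0^+)-p^*(L^-)$ is as large as allowed while $\int p^*=1$ — has equal oscillation $(1+VL)^{1/D}-1$ over each piece. Concretely one takes the breakpoints $x_j$, $j=0,\dots,D$, so that $p^*(x_j)/p^*(x_{j+1})=(1+VL)^{1/D}$; then on each piece $\mu(J_j)\,\mathrm{osc}_{J_j}(p^*)$ works out (after the $\int p^*=1$ normalization) to be at most $\big((1+VL)^{1/D}-1\big)/D$, and summing over the $D$ pieces gives $(1+VL)^{1/D}-1$. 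One then checks that for an arbitrary nonincreasing $p$ with $V_I(p)\le V$ the quantity $\sum_j \mu(J_j)\,\mathrm{osc}_{J_j}(p)$ is maximized (over this class, with this fixed partition) by a multiple/translate of $p^*$ — a convexity/rearrangement argument: with the partition fixed, $\sum_j \mu(J_j)(p(x_j^-)-p(x_{j+1}^+))$ is linear in the "profile" $(p(x_j))_j$ subject to the monotonicity constraints and the mass constraint, so the extremum is attained at a vertex, which corresponds to $p^*$. Finally one combines with the crude bound $2$ (valid because $\int_J|p-\overline p_J|\le 2\int_J p$ and $\sum_J\int_J p=1$), and uses the elementary inequality $t^{1/D}-1\le (\log t)/D$ for $t\ge 1$ to get the last bound in \eqref{eq-lem-birge}; that $\overline p$ is again a nonincreasing density on $I$ is immediate from $\overline p_{J_j}=\mu(J_j)^{-1}\int_{J_j}p\,d\mu$ being the decreasing sequence of averages of a decreasing function, with total integral $1$.

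The main obstacle I anticipate is the extremal-density step: verifying cleanly that, for the fixed $p$-independent partition, the worst case over all monotone $p$ with $V_I(p)\le V$ and $\int_I p=1$ is indeed the "harmonic" profile $p^*$ and that this worst-case sum equals exactly $(1+VL)^{1/D}-1$. This requires being careful about two competing constraints (bounded variation and unit mass) and about the fact that a monotone density need not be continuous, so "oscillation on $J_j$" must be interpreted via left/right limits; one should probably argue on the distribution function $F$ (concave, since $p$ is nonincreasing, with $F(L)=1$) and recast $\sum_j\mu(J_j)\,\mathrm{osc}_{J_j}(p)$ in terms of increments of $F'$, then optimize. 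The remaining pieces — the per-interval error bound, the two elementary inequalities, and the invariance under location-scale — are routine.
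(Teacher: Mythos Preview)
Your geometric partition coincides with the paper's (both give $x_j-a=V^{-1}\bigl[(1+\eta)^j-1\bigr]$ with $\eta=(1+VL)^{1/D}-1$), but the error analysis has two real gaps.

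First, the per-interval bound for a monotone $p$ on $J=(u,v)$ is $\int_J|p-\overline p_J|\,d\mu\le\tfrac12(v-u)\bigl(p(u)-p(v)\bigr)$, and the factor $\tfrac12$ is not cosmetic here. Without it you would need $\sum_j L_j\,\mathrm{osc}_{J_j}(p)\le\eta$ for every admissible $p$, and your extremal claim that the harmonic profile $p^\ast(x)=c/(x+a)$ realises the maximum is false. For $D=2$ and any $\eta\le1$, put all the variation in $J_2$: take $p$ equal to a constant $c$ on $J_1$ and on an initial portion of $J_2$, then dropping by $V$; this is a bona fide nonincreasing density with $V_I(p)=V$, and it gives $\sum_jL_j\,\mathrm{osc}_{J_j}(p)=L_2V=\eta(1+\eta)>\eta$, whereas your $p^\ast$ yields only $\eta^2/[(1+\eta)\log(1+\eta)]<\eta$. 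The LP reduction also breaks down as written, because $\int p=1$ is not a constraint on the node values $(p(x_j))_j$ alone. The paper sidesteps the extremal problem entirely: with the $\tfrac12$ in hand, an Abel summation exploiting $L_{j+1}-L_j=\eta L_j$ gives directly
\[
\tfrac12\sum_{j=1}^{D}L_j\bigl(p(x_{j-1})-p(x_j)\bigr)\;\le\;\tfrac12\Bigl[V L_1+\eta\sum_{j=1}^{D-1}p(x_j)L_j\Bigr]\;\le\;\tfrac12(\eta+\eta)=\eta,
\]
the only use of the mass constraint being $\sum_{j=1}^{D-1}p(x_j)L_j\le\int_a^{x_{D-1}}p\,d\mu\le1$.

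Second, the ``elementary inequality $t^{1/D}-1\le(\log t)/D$'' is stated in the wrong direction (recall $e^x-1\ge x$). The passage from $(1+VL)^{1/D}-1$ to $2D^{-1}\log(1+VL)$ genuinely requires the $\wedge\,2$: writing $x=D^{-1}\log(1+VL)$, one uses $(e^x-1)\wedge2\le(2/\log3)\,x\le2x$, valid because $e^x-1\le2$ forces $x\le\log3$ and $u\mapsto(e^u-1)/u$ is increasing on $(0,\log3]$.
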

\begin{proof}
The proof is postponed to subsection \ref{subsect-proof-kmono}.
\end{proof}

Although the result is hidden in his calculations, Birg\'e~\cite{MR902242} has established a bound of the same flavour where the variation $V$ is replaced by a uniform bound on $p$. Unlike his, our bound \eref{eq-lem-birge} allows to recover the fact that when $V=0$, i.e. when $p$  is constant on $I$, the left-hand side equals 0 as expected. 
The combination of Theorem~\ref{thm-monotone} and Theorem~\ref{thm-3}  immediately leads to the following 
\begin{cor}\label{cor-monotone}
Let $k\ge 3$. If $X_1,\dots,X_n$ are i.i.d.\ with a density $p\in\overline \cM_{k}^{\infty}(R)$, then the TV-estimator $\widehat p$ on $\cM_{k}$ satisfies 
\begin{equation}\label{eq-cor-mon1}
\mathbb{E}\cro{\|p-\widehat{p}\|} \leq 82.6\pa{\frac{R}{n}}^{1/3}+166.4\sqrt{\frac{2k}{n}}+\frac{2\varepsilon}{n}.
\end{equation}
\end{cor}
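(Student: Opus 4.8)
The plan is straightforward: Corollary~\ref{cor-monotone} is an immediate consequence of chaining Theorem~\ref{thm-monotone} with Theorem~\ref{thm-3}, together with the relation between the total variation distance and the $\L_1$-norm. So the proof amounts to bookkeeping of constants rather than any new idea; I will lay out the steps explicitly.

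First I would specialize Theorem~\ref{thm-monotone} to the i.i.d.\ case. When $X_1,\dots,X_n$ is an i.i.d.\ sample with density $p$, the uniform mixture of the marginals is simply $P\et = P = p\cdot\mu$, so $\dTV{P\et}{\widehat P} = \dTV{P}{\widehat P} = \frac12\norm{p-\widehat p}$. Moreover the infimum over $D\ge 1$ in~\eref{eq-Thm2} of the bracketed quantity, with $P\et$ replaced by $P$, is exactly $\B_{k,n}(p)$ by its definition~\eref{eq-defB} (the two expressions for $\B_{k,n}(p)$ agree because $\dTV{P}{Q}=\frac12\norm{p-q}$ and $\overline\sO_{D,k}$ is the probability model associated to $\overline\cO_{D,k}$). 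Hence Theorem~\ref{thm-monotone} gives
\[
\tfrac12\,\mathbb{E}\cro{\norm{p-\widehat p}} = \mathbb{E}\cro{\dTV{P}{\widehat P}} \le \B_{k,n}(p) + \frac{\varepsilon}{n}.
\]

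Next I would invoke Theorem~\ref{thm-3}: since $p\in\overline\cM_k^\infty(R)$ by hypothesis, we have $\B_{k,n}(p)\le 41.3\,(R/n)^{1/3} + 83.2\sqrt{2k/n}$. Substituting this into the previous display yields
\[
\mathbb{E}\cro{\norm{p-\widehat p}} \le 2\cro{41.3\pa{\frac{R}{n}}^{1/3} + 83.2\sqrt{\frac{2k}{n}}} + \frac{2\varepsilon}{n} = 82.6\pa{\frac{R}{n}}^{1/3} + 166.4\sqrt{\frac{2k}{n}} + \frac{2\varepsilon}{n},
\]
which is exactly~\eref{eq-cor-mon1}.

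There is essentially no obstacle here; the only thing to be careful about is the factor $2$ coming from the identity $\dTV{\cdot}{\cdot} = \frac12\norm{\cdot}$, which doubles every constant (and the $\varepsilon/n$ term) when passing from the TV-risk bound to the $\L_1$-risk bound, and checking that the numerical constants $82.6$ and $166.4$ are indeed $2\times 41.3$ and $2\times 83.2$. One should also note in passing that the hypothesis $k\ge 3$ is exactly what is required for Theorem~\ref{thm-3} (and for $\overline\cM_k^\infty(R)$ to be defined), and that $k\ge 3 \ge 2$ so Theorem~\ref{thm-monotone} applies as well.
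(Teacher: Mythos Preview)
Your proof is correct and matches the paper's approach exactly: the corollary is stated as an immediate combination of Theorem~\ref{thm-monotone} and Theorem~\ref{thm-3}, and your bookkeeping of the factor $2$ from $d=\tfrac12\norm{\cdot}$ is precisely what is needed.
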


\subsection{Estimation of other $k$-piecewise monotone densities}
Corollary~\ref{cor-monotone} provides an upper bound on the $\L_{1}$-risk of the TV-estimator  for estimating a density $p\in \overline \cM_{k}^{\infty}$. 
A natural question is how the estimator performs when the density $p$ is neither bounded nor supported on a compact interval. Since for such densities we may write
\begin{equation}\label{eq-fondmono}
\B_{k,n}(p)\le \inf_{\overline p\in \overline \cM_{k}^{\infty}}\cro{\frac{3}{2}\norm{p-\overline p}+\B_{k,n}(\overline p)},
\end{equation}
an upper bound on $\B_{k,n}(p)$ can be obtained by combining Theorem~\ref{thm-3} with an approximation result showing how general densities in $\overline \cM_{k}$ can be approximated by elements of $\overline \cM_{k}^{\infty}$. In this section, we therefore study the approximation properties of the set $\overline \cM_{k}^{\infty}$ with respect to possibly unbounded and non-compactly supported densities. We start with the case of a monotone density on a half-line and introduce the following definitions. 

\begin{df}\label{def-conjugate}
Given a nonincreasing density $p$ on $(a,+\infty)$ with $a\in\R$, we define $\widetilde p$ as the mapping on $(0,+\infty)$ given by    
\begin{equation}\label{def-ptilde}
\widetilde p(y)=\inf\ac{x>0,\; p(a+x)< y}\ge 0.
\end{equation}
%
We define the $x$-{\em tail function} $\tau_{x}(p,\cdot)$ associated to $p$ as 
\begin{displaymath}
\tau_x(p,\cdot)\,:\,\left|
\begin{array}{rcl}
[0,+\infty)&\longrightarrow&\R_{+}\vspace{2pt}\\
t&\longmapsto &\dps{\int_{t}^{+\infty}p(a+x)d\mu(x)}
\end{array}\right.
\end{displaymath}

the $y$-{\em tail function} $\tau_{y}(p,\cdot)$ as 
\begin{displaymath}
\tau_y(p,\cdot)\,:\,\left|
\begin{array}{rcl}
[0,+\infty)&\longrightarrow&\R_{+}\vspace{2pt}\\
t&\longmapsto &\dps{\int_{t}^{+\infty}\widetilde p(y)d\mu(y)}
\end{array}\right.
\end{displaymath}
and the {\em tail function} $\tau(p,\cdot)$ as 
\begin{equation}\label{eq-tail}
\tau(p,t)=\inf_{s>0}\cro{\tau_{x}(p,st)+\tau_{y}(p,p(a+s))}\quad \text{for all $t\ge 1$}.
\end{equation}
When $p$ is a nondecreasing density on $(-\infty,-a)$ with $a\in\R$, we define $\tau_{x}(p,\cdot)$, $\tau_{y}(p,\cdot)$ and $\tau(p,\cdot)$ as respectively the $x$-tail, $y$-tail and tail functions of the nonincreasing density $x\mapsto p(-x)$ .
\end{df}
Let us comment on these definitions. When $p$ is a continuous decreasing density from $(0,+\infty)$ onto $(0,+\infty)$, $\widetilde p$ is the reciprocal function $p^{-1}$. By reflecting the graph of $x\mapsto p(x)$ in the line $y=x$, we easily see that  $\widetilde p=p^{-1}$ is a nonincreasing density on $(0,+\infty)$. This property remains true in the general case as shown by the lemma below with the special value $B=0$. As a consequence, $\tau_{y}(p,\cdot)$ can be interpreted as the tail of the distribution function associated to the density $\widetilde p$ while $\tau_{x}(p,\cdot)$ is that of $p(a+\cdot)$. Figure~1 displays a graphical representations of the quantities $\tau_{x}(p,st)$ and $\tau_{y}(p,p(a+s))$ for $s>0$ and $t>1$.
\begin{lem}\label{lem-ty}
Let $p$ be a nonincreasing density on $(a,+\infty)$ with $a\in\R$ and $\widetilde p$ the mapping defined by \eref{def-ptilde}. For all $B\ge 0$, 
\begin{equation}\label{lem-ty-eq}
\int_{0}^{+\infty}\cro{p(a+x)-B}_{+}d\mu(x)=\int_{B}^{+\infty}\widetilde p(y)d\mu(y)=\tau_{y}(p,B).
\end{equation}
\end{lem}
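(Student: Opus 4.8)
\emph{Proof plan.} The second equality in \eqref{lem-ty-eq} is just the definition of $\tau_{y}(p,\cdot)$, so the content is the first equality. Write $g(x)=p(a+x)$ for $x>0$, a nonnegative nonincreasing function on $(0,+\infty)$. The idea is to express both sides as the $\mu$-measure of the region lying under the graph of $g$ and above the level $B$, using a layer-cake decomposition and Tonelli's theorem. Concretely, for every $x>0$ and $t\ge 0$ one has $\1_{\{t<\cro{g(x)-B}_{+}\}}=\1_{\{g(x)>B+t\}}$, hence $\cro{g(x)-B}_{+}=\int_{0}^{+\infty}\1_{\{g(x)>B+t\}}\,d\mu(t)$; integrating in $x$, applying Tonelli (the integrand is nonnegative and jointly measurable), and substituting $y=B+t$ gives
\[
\int_{0}^{+\infty}\cro{g(x)-B}_{+}d\mu(x)=\int_{0}^{+\infty}\mu\pa{\{x>0:\ g(x)>B+t\}}d\mu(t)=\int_{B}^{+\infty}\mu\pa{\{x>0:\ g(x)>y\}}d\mu(y).
\]

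It then remains to identify the superlevel sets of $g$ with $\widetilde p$. Since $g$ is nonincreasing, for any $y>0$ the set $\{x>0:\ g(x)\ge y\}$ is downward closed in $(0,+\infty)$, hence an interval of the form $(0,c_{y})$ or $(0,c_{y}]$ with $c_{y}=\sup\{x>0:\ g(x)\ge y\}\in[0,+\infty]$; in either case its $\mu$-measure equals $c_{y}$. By monotonicity, $g(x)\ge y$ for every $x<c_{y}$ and $g(x)<y$ for every $x>c_{y}$, so $\widetilde p(y)=\inf\{x>0:\ g(x)<y\}=c_{y}$. Thus $\widetilde p(y)=\mu(\{x>0:\ g(x)\ge y\})$ for all $y>0$. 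Finally, $\{x>0:\ g(x)>y\}$ and $\{x>0:\ g(x)\ge y\}$ differ only by the level set $\{x>0:\ g(x)=y\}$, which has positive $\mu$-measure for at most countably many $y$; hence $\mu(\{x>0:\ g(x)>y\})=\widetilde p(y)$ for $\mu$-almost every $y>0$. Plugging this into the display above yields $\int_{0}^{+\infty}\cro{g(x)-B}_{+}d\mu(x)=\int_{B}^{+\infty}\widetilde p(y)\,d\mu(y)=\tau_{y}(p,B)$, which is \eqref{lem-ty-eq}.

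The only mildly delicate points are the identification $\widetilde p(y)=c_{y}$ directly from the infimum defining $\widetilde p$ in \eref{def-ptilde} (handled by the monotonicity argument above, which also covers the degenerate cases where the superlevel set is empty or all of $(0,+\infty)$) and the harmless almost-everywhere replacement of $\{g\ge y\}$ by $\{g>y\}$; everything else is Tonelli's theorem and a linear change of variables, and the identity is valid in $[0,+\infty]$ without any integrability assumption on $g$.
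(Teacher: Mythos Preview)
Your proof is correct and follows essentially the same approach as the paper's: both rewrite the left-hand side as a double integral via Fubini/Tonelli and identify $\widetilde p(y)$ with the Lebesgue measure of the superlevel set of $g=p(a+\cdot)$ at height $y$. The only cosmetic difference is that the paper works with the non-strict set $\{g\ge y\}$ and uses the sandwich $\1_{x<\widetilde p(y)}\le \1_{g(x)\ge y}\le \1_{x\le \widetilde p(y)}$ to get $\mu(\{g\ge y\})=\widetilde p(y)$ directly, whereas you start from the strict set $\{g>y\}$ (as produced by the layer-cake identity) and then pass to $\{g\ge y\}$ via the observation that $\{g=y\}$ has positive measure for at most countably many $y$; both routes are equally valid.
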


By changing $p$ into $x\mapsto p(-x)$, we also obtain that when $p$ is nondecreasing on $(-\infty,-a)$, 
\[
\int_{-\infty}^{0}\cro{p(-a+x)-B}_{+}d\mu(x)=\int_{B}^{+\infty}\widetilde p(y)d\mu(y)=\tau_{y}(p,B)\quad \text{for all $B\ge 0$.}
\]
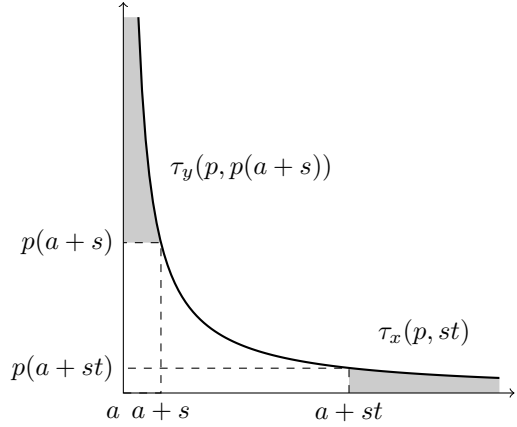
\begin{figure}
\begin{tikzpicture}
	\fill[black!20, domain=3:5, variable=\x]
 	 	plot ({\x}, {1/\x})
  	-- (5,0)
  	-- (3,0)
  	-- cycle;
	\draw[dashed, black] (3,0) node[below] {${\textcolor{black}{a+st}}$} -- (3,.33) -- (0,.33) node[left] {\textcolor{black}{$p(a+st)$}};
	\draw[black] (4,.5) node[above] {$\tau_x(p,st)$};
	
	\fill[black!20, domain=.2:.5, variable=\x]
 	 	plot ({\x}, {1/\x})
  	-- (0,2)
  	-- (0,5)
  	-- cycle;
	\draw[dashed,black] (0,0) node[below] {${\textcolor{black}{a \phantom{+}}}$} -- (.5,0) node[below] {${\textcolor{black}{a+s}}$} -- (.5,2) -- (0,2) node[left] {$\textcolor{black}{p(a+s)}$ };
	\draw[black] (.5,3) node[right] {$\tau_y(p,p(a+s))$};
	\draw[thick, black, domain=.2:5, samples=100] plot ({\x}, {1/\x});
	\draw[<->] (5.2,0) -- (0,0) -- (0,5.2);
\end{tikzpicture}
\caption{The quantities $\tau_{x}(p,st)$ and $\tau_{y}(p,p(a+s))$ (dark areas) for $s>0$ and $t>1$.}
\end{figure}

\begin{proof}
Let $y>0$. Since $p$ is a nonincreasing density on $(a,+\infty)$, it necessarily tends to 0 at $+\infty$. The set $I(y)=\ac{x>0,\; p(a+x)<y}$ is therefore a nonempty unbounded interval with endpoint $\widetilde p(y)<+\infty$ by definition of $\widetilde p(y)$. In particular, 
\[
(\widetilde p(y),+\infty)\subset I(y)\subset [\widetilde p(y),+\infty),
\]
and by taking the complements of those sets we obtain that for all $(x,y)\in(0,+\infty)\times (0,+\infty)$
\[
\1_{x<\widetilde p(y)}\le \1_{p(a+x)\ge y}\le \1_{x\le \widetilde p(y)}.
\]
Integrating these inequalities on $(0,+\infty)\times (B,+\infty)$ with respect to $\mu\otimes\mu$ and using Fubini's theorem, we obtain that 
\begin{align*}
&\int_{B}^{+\infty}\widetilde p(y)d\mu(y)\\
&=\int_{B}^{+\infty}\cro{\int_{0}^{+\infty}\1_{x<\widetilde p(y)}d\mu(x)}d\mu(y)\le \int_{B}^{+\infty}\cro{\int_{0}^{+\infty} \1_{p(a+x)\ge y}d\mu(x)}d\mu(y)\\
&=\int_{0}^{+\infty}\cro{\int_{B}^{+\infty} \1_{p(a+x)\ge y}d\mu(y)}d\mu(x)=\int_{a}^{+\infty}\cro{p(a+x)-B}_{+}d\mu(x)\\
&\le  \int_{B}^{+\infty}\cro{\int_{0}^{+\infty} \1_{x\le \widetilde p(y)}d\mu(x)}d\mu(y)=\int_{B}^{+\infty}\widetilde p(y)d\mu(y),
\end{align*}
which proves~\eref{lem-ty-eq}.
\end{proof}

It follows from \eqref{lem-ty-eq} that if $p$ is a nonincreasing density on $(a,+\infty)$, $\tau(p,t)$ also writes for all $t\ge 1$ as 
\[
\tau(p,t)=\inf_{s>0}\cro{\int_{st}^{+\infty}p(a+x)d\mu(x)+\int_{0}^{s}\cro{p(a+x)-p(a+s)}d\mu(x)}.
\]
It is not difficult to check that the mapping $\tau(p,\cdot)$ is nonincreasing on $[1,+\infty)$, tends to 0 at $+\infty$ and is invariant under a location-scale transformation, i.e.\  by changing $p$ into the density $\sigma^{-1}p[\sigma^{-1}(\cdot-m)]$ on $(\sigma a+m,+\infty)$ with $m\in\R$ and $\sigma>0$.

We consider the general situation where $p$ is an arbitrary element of $\overline \cM_{\ell}$ with $\ell\ge 2$. Changing the values of $p$  on a negligible set, if ever necessary, which will not change the way it can be approximated in $\L_{1}$-norm, we may assume without loss of generality that it can be written in the form: 
\begin{equation}\label{eq-formp00}
p=w_{1}p_{1}\1_{(-\infty,x_{1})}+\sum_{i=2}^{\ell-1}w_{i}p_{i}\1_{(x_{i-1},x_{i})}+w_{\ell}p_{\ell}\1_{(x_{\ell-1},+\infty)}
\end{equation}
where $x_{1}<\ldots<x_{\ell-1}$ is an increasing sequence of real numbers and $p_{1},\ldots,p_{\ell}$, $w_{1},\ldots,w_{\ell}$ are defined as follows. The functions $p_{1}$ and $p_{\ell}$ denote monotone densities on $(-\infty, x_{1})$ and $(x_{\ell-1},+\infty)$ respectively, $w_{1}=\int_{-\infty}^{x_{1}}pd\mu$, $w_{\ell}=\int_{x_{\ell}}^{+\infty}pd\mu$ and when $\ell>2$, the $p_{i}$ are monotone densities on $(x_{i-1},x_{i})$ and $w_{i}=\int_{x_{i-1}}^{x_{i}}pd\mu$ for all $i\in\{2,\ldots,\ell-1\}$. For $p$ written in the form~\eref{eq-formp00}, we set 
\begin{equation}\label{eq-def-tau-infty}
\tau_{\infty}(p,t)=\max_{i\in\{1,\ldots,\ell\}}\tau\pa{p_{i},t}\quad \text{for all $t\ge 1$.}
\end{equation}
The mapping $t\mapsto \tau(p,t)$ is nonincreasing on $\R_{+}$ and tends to 0 at $+\infty$. 
\begin{thm}\label{thm-monogene}
Let $\ell\ge 2$, $k\ge 2\ell$ and $R\ge \ell\log 2$. If $p$ is a density of the form~\eref{eq-formp00} a.e., 
\begin{equation}\label{thm-monogene-eq0}
\inf_{\overline p\in \overline \cM_{k}^{\infty}(R)}\norm{p-\overline p}\le 2\tau_{\infty}\pa{p,\exp\pa{\frac{R}{\ell}}-1}.
\end{equation}
\end{thm}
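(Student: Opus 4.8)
The goal is to approximate a density $p$ written in the form~\eref{eq-formp00} by an element of $\overline \cM_{k}^{\infty}(R)$, i.e.\ by a compactly supported, bounded $k$-piecewise monotone density whose "complexity" $\bsR(p)$ stays below $R$. The natural idea is to treat each of the $\ell$ monotone pieces $p_{i}$ separately: for each piece we truncate it so that it becomes bounded and compactly supported, thereby producing a monotone density on a bounded interval, and we renormalize the total mass. The truncation on each piece $i$ must be done so that (a) the $\L_{1}$ error is controlled by the tail function $\tau(p_{i},\cdot)$ evaluated at a suitable point, and (b) after truncation the quantity $\log(1+L_{i}V_{i})$ is at most $R/\ell$, so that $\bigl[\sum_{i}\sqrt{w_{i}\log(1+L_{i}V_{i})}\bigr]^{2}\le \bigl[\sum_{i}\sqrt{w_{i}}\cdot\sqrt{R/\ell}\bigr]^{2}\le (R/\ell)\bigl(\sum_{i}\sqrt{w_{i}}\bigr)^{2}\le (R/\ell)\cdot \ell \sum_i w_i = R$ by Cauchy--Schwarz, which is exactly why the factor $\ell$ and the hypothesis $k\ge 2\ell$ (each truncated piece may need to be split, as in~\eref{def-mbark}, which requires $k-2\ge\ell$, actually $k\ge 2\ell$ gives room) appear. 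The hypothesis $R\ge \ell\log 2$ guarantees $\exp(R/\ell)-1\ge 1$, so that $\tau_{\infty}(p,\exp(R/\ell)-1)$ is well-defined (recall $\tau(p_i,\cdot)$ is only defined on $[1,+\infty)$).

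\textbf{The truncation step.} Fix a monotone, say nonincreasing, piece $p_{i}$ on an interval $(a_{i},b_{i}|$ (possibly $b_i=+\infty$), and set $t_{i}=\exp(R/\ell)-1\ge 1$. Using Lemma~\ref{lem-ty} and the rewriting of $\tau(p_i,\cdot)$ just below it, choose $s=s_{i}>0$ nearly attaining the infimum in
\[
\tau(p_{i},t_{i})=\inf_{s>0}\cro{\int_{s t_{i}}^{+\infty}p_{i}(a_{i}+x)\,d\mu(x)+\int_{0}^{s}\cro{p_{i}(a_{i}+x)-p_{i}(a_{i}+s)}\,d\mu(x)}.
\]
Define the truncated piece $\widehat p_{i}$ on the bounded interval $(a_{i},a_{i}+s_{i}t_{i})$ by $\widehat p_{i}(a_i+x)=\bigl(p_{i}(a_{i}+x)-p_{i}(a_{i}+s_{i})\bigr)_{+}$ for $x\le s_i t_i$ (so we both cap the height at $p_i(a_i+s_i)$ — no, rather: subtract the floor value — and cut the tail at $s_i t_i$). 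Then $\widehat p_i$ is nonincreasing on a bounded interval, its variation is $V_i\le p_i(a_i)-p_i(a_i+s_i)$, hmm — I would instead simply cut the tail at $x=s_i t_i$ and the height above level $p_i(a_i+s_i)$: the support length is $L_i\le s_i t_i$ and $V_i\le p_i(a_i+s_i)$, so $L_i V_i\le s_i t_i\, p_i(a_i+s_i)\le t_i\int_0^{s_i}p_i(a_i+x)d\mu(x)\cdot(\dots)$ — the key inequality to extract is $L_iV_i\le t_i$ (via $s_i p_i(a_i+s_i)\le \int_0^{s_i}p_i\le 1$ on that piece after renormalizing, plus the tail cut giving length $\le s_i t_i$); then $\log(1+L_iV_i)\le\log(1+t_i)=R/\ell$ as desired. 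The $\L_1$-cost of this operation on piece $i$ is exactly the two integrals above, i.e.\ $\le \tau(p_i,t_i)+(\text{small }\eps)$, and after summing over $i$ (with the $w_i$ weights) and renormalizing the total mass — which costs at most another equal amount by the standard fact that projecting onto the probability simplex in $\L_1$ at most doubles the defect — we get $\norm{p-\overline p}\le 2\sum_i w_i\tau(p_i,t_i)\le 2\tau_{\infty}(p,t_i)=2\tau_{\infty}(p,\exp(R/\ell)-1)$ since $\sum_i w_i=1$. One then passes from $\overline\cM_k^\infty$ to $\overline\cM_k^\infty(R)$ using $\bsR(\overline p)<R$ from the Cauchy--Schwarz bound (strictness is free by taking the $s_i$ infima non-exactly, or by a harmless perturbation).

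\textbf{Main obstacle.} The delicate point is the bookkeeping on a single truncated monotone piece: showing simultaneously that (i) the $\L_1$-error is bounded by the two integrals defining $\tau(p_i,t_i)$, and (ii) the truncated piece has $L_iV_i\le t_i=\exp(R/\ell)-1$, so that $\log(1+L_iV_i)\le R/\ell$. These two demands pull in opposite directions (cutting less tail lowers (i) but raises $L_i$; capping less height lowers (i) but raises $V_i$), and the precise choice of $s_i$ from the infimum in $\tau(p_i,\cdot)$ is what reconciles them — one must check that with $L_i\le s_i t_i$ and $V_i\le p_i(a_i+s_i)$ one has $L_iV_i\le s_i t_i p_i(a_i+s_i)\le t_i\cdot\bigl(s_i p_i(a_i+s_i)\bigr)\le t_i$, using $s_i p_i(a_i+s_i)\le\int_0^{s_i}p_i(a_i+x)\,d\mu(x)\le 1$ (monotonicity + $p_i$ integrates to $1$ on its piece). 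After that, the renormalization and the Cauchy--Schwarz aggregation across the $\ell$ pieces, together with the splitting into at most $k$ intervals allowed by $k\ge 2\ell$, are routine, and~\eref{thm-monogene-eq0} follows.
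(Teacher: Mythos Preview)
Your approach is essentially the same as the paper's: the paper packages the single-piece truncation as an auxiliary lemma (Lemma~\ref{lem-tronc}), taking $\overline p_{s}=(p_{i}\wedge B)\1_{I}/\int_{I}(p_{i}\wedge B)d\mu$ with $I=(a_{i},a_{i}+s_{i}t)$ and $B=p_{i}(a_{i}+s_{i})$ --- exactly the ``cap and cut'' scheme you eventually settle on --- and then aggregates across the $\ell$ pieces with Cauchy--Schwarz just as you describe. The only cosmetic difference is that the paper renormalizes each piece individually (so the original weights $w_{i}$ are preserved and $\overline p=\sum_{i}w_{i}\overline p_{i}$ is automatically a density), whereas you renormalize globally at the end.

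One point you should tighten: your bound $L_{i}V_{i}\le t_{i}$ is computed for the \emph{un-renormalized} truncated piece, but the form~\eref{def-mbark} requires each $p_{i}$ to be a density, so after dividing $\widehat p_{i}$ by its mass $c_{i}=\int_{I}(p_{i}\wedge B)d\mu$ the relevant product becomes $L_{i}\cdot V_{i}/c_{i}$. This is still at most $t_{i}$ because $c_{i}\ge s_{i}B$ (the capped piece equals $B$ on $(a_{i},a_{i}+s_{i})$ by monotonicity), giving $L_{i}V_{i}/c_{i}\le s_{i}t_{i}B/(s_{i}B)=t_{i}$; the paper makes exactly this observation when bounding the variation of $\overline p_{s}$ by $B/c_{i}$. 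Also discard the ``subtract the floor'' variant you float first: it leaves the variation unbounded when $p_{i}$ blows up at $a_{i}$, and the paper uses only the capping scheme.
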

\begin{proof}
The proof  is postponed to Subsection \ref{subsect-proof-kmono}.
\end{proof}

\noindent
By combining Theorem \ref{thm-3} and Theorem \ref{thm-monogene} we obtain the following corollary.

\begin{cor}\label{cor-monogene}
Let $\ell\ge 2$ and $k\ge 2\ell$. If $p$ is a density of the form~\eref{eq-formp00} a.e.,
\begin{equation}\label{cor-monogene-eq0}
\B_{k,n}(p)\le 44.3\pa{\frac{\ell \log(1+r_{n})}{n}}^{1/3}+83.2\sqrt{\frac{2k}{n}}
\end{equation}
where 
\begin{equation}\label{def-rn}
r_{n}=\inf\ac{t\ge 1,\; \tau_{\infty}(p,t)\le \pa{\frac{\ell \log(1+t)}{n}}^{1/3}}
\end{equation}
and $\tau_{\infty}(p,\cdot)$ is defined by~\eref{eq-def-tau-infty}. Then, the TV-estimator $\widehat p$ on $\cM_{k}$ satisfies 
\begin{equation}\label{eq-cor-mon2}
\mathbb{E}\cro{\|p-\widehat{p}\|} \leq 88.6\pa{\frac{\ell \log(1+r_{n})}{n}}^{1/3}+166.4\sqrt{\frac{2k}{n}}+\frac{2\varepsilon}{n}.
\end{equation}
\end{cor}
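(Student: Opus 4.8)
The plan is to concatenate the ``approximate then estimate'' inequality~\eref{eq-fondmono}, the uniform control of $\B_{k,n}$ over the classes $\overline\cM_{k}^{\infty}(R)$ coming from Theorem~\ref{thm-3}, and the approximation bound of Theorem~\ref{thm-monogene}, and then optimise over the radius $R$. First I would fix an arbitrary real $t>r_{n}$ and set $R=\ell\log(1+t)$, so that $R>\ell\log 2$ (because $t>r_{n}\ge 1$, the infimum in~\eref{def-rn} being taken over $t\ge 1$) and $\exp(R/\ell)-1=t$. Restricting the infimum in~\eref{eq-fondmono} to the subset $\overline\cM_{k}^{\infty}(R)\subset\overline\cM_{k}^{\infty}$ and using that, by Theorem~\ref{thm-3} (applicable since $k\ge 2\ell\ge 4\ge 3$), every $\overline p\in\overline\cM_{k}^{\infty}(R)$ satisfies $\B_{k,n}(\overline p)\le 41.3(R/n)^{1/3}+83.2\sqrt{2k/n}$ — a bound depending on $R$ only — one obtains
\[
\B_{k,n}(p)\le \frac{3}{2}\inf_{\overline p\in\overline\cM_{k}^{\infty}(R)}\norm{p-\overline p}+41.3\pa{\frac{R}{n}}^{1/3}+83.2\sqrt{\frac{2k}{n}}.
\]

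Next, since $R\ge\ell\log 2$, $k\ge 2\ell$ and $\ell\ge 2$, Theorem~\ref{thm-monogene} bounds the first term by $\tfrac{3}{2}\cdot 2\,\tau_{\infty}(p,\exp(R/\ell)-1)=3\tau_{\infty}(p,t)$. Then, because $t\mapsto\tau_{\infty}(p,t)$ is nonincreasing whereas $t\mapsto(\ell\log(1+t)/n)^{1/3}$ is increasing, the set appearing in~\eref{def-rn} is an interval unbounded above with infimum $r_{n}$; in particular, for every $t>r_{n}$ one has $\tau_{\infty}(p,t)\le(\ell\log(1+t)/n)^{1/3}=(R/n)^{1/3}$. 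Collecting the three estimates gives
\[
\B_{k,n}(p)\le (3+41.3)\pa{\frac{\ell\log(1+t)}{n}}^{1/3}+83.2\sqrt{\frac{2k}{n}}\qquad\text{for every }t>r_{n},
\]
and, since the left-hand side does not depend on $t$ while $t\mapsto\log(1+t)$ is continuous, letting $t\downarrow r_{n}$ produces exactly~\eref{cor-monogene-eq0}. I would also note that $r_{n}<+\infty$, so the bound is not vacuous: as $t\to+\infty$, $\tau_{\infty}(p,t)\to 0$ while $(\ell\log(1+t)/n)^{1/3}\to+\infty$, hence the defining set of $r_{n}$ is nonempty.

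It remains to convert the bound on $\B_{k,n}(p)$ into a bound on the risk of $\widehat p$. When $X_{1},\dots,X_{n}$ are i.i.d.\ with density $p$ one has $P\et=p\cdot\mu$, and by the very definition~\eref{eq-defB} of $\B_{k,n}$ the right-hand side of~\eref{eq-Thm2} is then equal to $\B_{k,n}(p)+\varepsilon/n$; hence $\E[\dTV{P\et}{\widehat P}]\le\B_{k,n}(p)+\varepsilon/n$. Since $\dTV{P\et}{\widehat P}=\tfrac12\norm{p-\widehat p}$, this reads $\E[\norm{p-\widehat p}]\le 2\B_{k,n}(p)+2\varepsilon/n$, and substituting~\eref{cor-monogene-eq0} yields~\eref{eq-cor-mon2} with the constants $2\times 44.3=88.6$ and $2\times 83.2=166.4$.

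The hard part will be none of the main estimates, which are direct quotations of~\eref{eq-fondmono}, Theorems~\ref{thm-3} and~\ref{thm-monogene}, and~\eref{eq-Thm2}, but rather the bookkeeping around $r_{n}$: one must check that it is finite, that $r_{n}\ge 1$ so the chosen radius $R=\ell\log(1+r_{n})$ meets the hypothesis $R\ge\ell\log 2$ of Theorem~\ref{thm-monogene}, and that the crossing inequality $\tau_{\infty}(p,t)\le(\ell\log(1+t)/n)^{1/3}$ — which may fail exactly at $t=r_{n}$ — does hold for all $t>r_{n}$, so that~\eref{cor-monogene-eq0} is obtained only in the one-sided limit $t\downarrow r_{n}$.
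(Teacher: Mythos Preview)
Your proof is correct and follows essentially the same route as the paper's: fix $t>r_{n}$, set $R=\ell\log(1+t)$, combine~\eref{eq-fondmono} with Theorems~\ref{thm-3} and~\ref{thm-monogene}, use $\tau_{\infty}(p,t)\le(\ell\log(1+t)/n)^{1/3}$ for $t>r_{n}$, and let $t\downarrow r_{n}$. Your justification that the defining set of $r_{n}$ is an upper ray (via monotonicity of both sides) is in fact slightly more explicit than the paper's, which simply asserts the inequality for $t>r_{n}$ after noting $r_{n}$ is well-defined.
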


\begin{proof}
Since $\tau_{\infty}(p,\cdot)$ tends to 0 at $+\infty$, the set 
\[
\cR=\ac{t\ge 1,\; \tau_{\infty}(p,t)\le \pa{\frac{\ell \log(1+t)}{n}}^{1/3}}
\]
is nonempty,  $r_{n}$ is well-defined and for all $t>r_{n}\ge 1$, 
\[
\tau_{\infty}(p,t)\le \pa{\frac{\ell \log(1+t)}{n}}^{1/3}.
\]
Using \eref{eq-fondmono}, Theorems~\ref{thm-3} and~\ref{thm-monogene} with $R=\ell\log(1+t)>\ell \log 2$ we obtain that
\begin{align*}
\B_{k,n}(p)&\le \frac{3}{2}\inf_{\overline p\in \overline \cM_{k}^{\infty}(R)}\norm{p-\overline p}+\sup_{\overline p\in \overline \cM_{k}^{\infty}(R)}\B_{k,n}(\overline p)\\
&\le 3\tau_{\infty}\pa{p,t}+41.3\pa{\frac{\ell \log(1+t)}{n}}^{1/3}+83.2\sqrt{\frac{2k}{n}}\\
&\le 44.3\pa{\frac{\ell \log(1+t)}{n}}^{1/3}+83.2\sqrt{\frac{2k}{n}},
\end{align*}
and the result follows from the fact that $t$ is arbitrary in $(r_{n},+\infty)$. 
\end{proof}

\begin{exa}
Let $n\ge 2$, $\alpha\ge 0$, $\beta\ge -1$, $\gamma\in (0,1)$ and $q$ be the mapping defined by 
\[
q(x)=\frac{2^{1-\gamma}}{x^{1-\gamma}}\1_{(0,2)}(x)+\frac{2^{1+\alpha}\pa{\log 2}^{1+\beta}}{x^{1+\alpha}\pa{\log x}^{1+\beta}}\1_{[2,+\infty)}(x).
\]
When $(\alpha,\beta)\in (0,+\infty)\times [-1,+\infty)$ and when $\alpha=0$ and $\beta>0$, $q$ is a positive, integrable, nonincreasing function on $(0,+\infty)$ and we may denote by $p$ the corresponding density, i.e.\ $p=cq$ for some $c>0$ depending on $\alpha,\beta$ and $\gamma$. The density $p$ may be written in the form~\eref{eq-formp00} with $\ell=2$, $w_{1}=0$, $w_{2}=1$, $x_{1}=0$ and $p_{2}=p$. Throughout this example, $C$ denotes a positive number depending on $\alpha,\beta$ and $\gamma$ that may vary from line to line. 

It follows from Definition~\ref{def-conjugate} with $a=0$ that when $\alpha>0$, for all $t\ge 2$  
\begin{align*}
\frac{\tau_{x}(p,t)}{c2^{1+\alpha}\pa{\log 2}^{1+\beta}}&=\int_{t}^{+\infty}\frac{dx}{x^{1+\alpha}\pa{\log x}^{1+\beta}}=\int_{\log t}^{+\infty}\frac{e^{-\alpha s}}{s^{1+\beta}}ds\le \frac{1}{\alpha t^{\alpha}\pa{\log t}^{1+\beta}},
\end{align*}
and when $\alpha=0$ and $\beta>0$
\begin{align*}
\frac{\tau_{x}(p,t)}{c2^{1+\alpha}\pa{\log 2}^{1+\beta}}&=\int_{\log t}^{+\infty}\frac{1}{s^{1+\beta}}ds= \frac{1}{\beta\pa{\log t}^{\beta}}.
\end{align*}
For $y>c$, $\widetilde p:y\mapsto 2(c/y)^{1/(1-\gamma)}$, hence
\[
\tau_{y}(p,t)=\int_{t}^{+\infty}\widetilde p(y)d\mu(y)=\frac{2(1-\gamma)c^{1/(1-\gamma)}}{\gamma t^{\gamma/(1-\gamma)}}\quad \text{for all $t\ge c$.}
\]
We deduce that for all $t\ge 1$ and $s\in [2/t,2]$, $p(s)\ge p(2)=c$ and 
\begin{align*}
C^{-1}\tau_{\infty}(p,t)&\le 
\begin{cases}
\cro{{(st)^{\alpha}\pa{\log(st)}^{1+\beta}}}^{-1}+s^{\gamma}& \text{when $\alpha>0$ and $\beta\ge -1$}\\
\pa{\log(st)}^{-\beta}+s^{\gamma}& \text{when $\alpha=0$ and $\beta>0$}.
\end{cases}
\end{align*}
Taking 
\[
s=
\begin{cases}
2\cro{\pa{t^{\alpha}\pa{\frac{\log(1+t)}{\log2}}^{1+\beta}}^{-\frac{1}{\alpha+\gamma}}\vee t^{-1}}& \text{when $\alpha>0$ and $\beta\ge -1$}\\
2\cro{\pa{\frac{\log(1+t)}{\log 2}}^{-\frac{\beta}{\gamma}}\vee t^{-1}}& \text{when $\alpha=0$ and $\beta>0$}
\end{cases}
\]
the value of which belongs to $[2/t,2]$, we obtain that for all $t\ge 1$ 
\begin{align*}
C^{-1}\tau_{\infty}(p,t)&\le 
\begin{cases}
\cro{t^{\alpha}\pa{\log(1+t)}^{1+\beta}}^{-\frac{\gamma}{\alpha+ \gamma}}\vee t^{-\gamma}& \text{when $\alpha>0$ and $\beta\ge -1$}\\
\pa{\log(1+t)}^{-\beta}\vee t^{-\gamma}& \text{when $\alpha=0$ and $\beta>0$}.
\end{cases}
\end{align*}
If $\alpha>0$ and $\beta\ge -1$, by taking 
\[
t=t_{n}=C'n^{\frac{1}{3}\pa{\frac{1}{\alpha}+\frac{1}{\gamma}}}\log^{-\kappa}n\quad \text{with}\quad \kappa=\frac{1}{3}\pa{\frac{1}{\alpha}+\frac{1}{ \gamma}}+\frac{1+\beta}{\alpha}
\]
for some constant $C'>0$ large enough, we obtain that $\tau_{\infty}(p,t_{n})\le \pa{2\log(1+t_{n})/n}^{1/3}$ and consequently, $r_{n}$ defined by~\eref{def-rn} satisfies $r_{n}\le t_{n}$. Applying Corollary~\ref{cor-monogene}, we conclude that  for all $k\ge 4$ 
\[
\B_{k,n}(p)\le C\cro{\pa{\frac{\log n}{n}}^{1/3}+\sqrt{\frac{2k}{n}}}.
\]

If $\alpha=0$ and $\beta>0$, we take $t=t_{n}$ such that $\log (1+t_{n})=C'n^{1/(1+3\beta)}$ for some constant $C'>0$ large enough, we obtain that  
\begin{align*}
\tau_{\infty}(p,t_{n})\le Cn^{\frac{-\beta}{1+3\beta}}\le \pa{\frac{2\log(1+t_{n})}{n}}^{1/3},
\end{align*}
hence $r_{n}\le t_{n}$ and we get that for all $k\ge 4$
\[
\B_{k,n}(p)\le C\cro{n^{\frac{-\beta}{1+3\beta}}+\sqrt{\frac{2k}{n}}}.
\]
We are not aware of any $\L_{1}$-risk bound for the Grenander estimator when the target density has an infinite support and heavy tails.
\end{exa}

\section{Convex-concave densities}\label{sect-cvxcve}

\subsection{Piecewise monotone convex-concave densities}

In this section, our aim is to estimate a density on the line which is piecewise monotone convex-concave in the sense defined below.

\begin{df} \label{def_cvx_cve}
A function $g$ is said to be convex-concave on an interval $I$ if it is either convex or concave on $I$. For $k\geq 2$, a function $g$ on $\mathbb{R}$ is said to be $k$-piecewise monotone convex-concave  if there exists $A \in \cA(k-1)$ such that the restriction of $g$ to the each interval $I\in \gI(A)$ is monotone and convex-concave. In particular, there exist at most $k$ functions $\{g_I,\; I \in \gI(A)\}$, where $g_{I}$ is monotone and convex-concave on $I$ such that 
\[
g(x) = \sum_{I \in \gI(A)} g_I(x)\1_{I}(x) \quad \text{for all $x\in\R\setminus A$}.
\]
\end{df}
We denote by $\overline{\cM}_k^1$ the set of $k$-piecewise monotone convex-concave densities. The Laplace density $x\mapsto (1/2)e^{-|x|}$ belongs to  $\overline{\cM}_2^1$, the uniform density on a bounded interval belongs to $\overline{\cM}_3^1$, all convex-concave densities on an interval belong to $\overline{\cM}_4^1$. A function $g\in \overline{\cM}_k^1$ associated to $A\in\cA(k-1)$ admits left and right derivatives at any point $x\in \R\setminus A$. These derivatives are  denoted by $g_l',g_r'$ respectively. More generally, when a function $f$ is continuous and convex-concave on a nontrivial bounded interval $[a,b]$, we define 
\[
f'_{r}(z)=\lim_{x\downarrow z}\frac{f(x)-f(z)}{x-z}\quad \text{for all $z\in [a,b)$}
\]
and 
\[
f'_{l}(z)=\lim_{x\uparrow z}\frac{f(x)-f(z)}{x-z}\quad \text{for all $z\in (a,b]$}.
\]
These quantities are finite for all $z\in (a,b)$ and belong to $[-\infty,+\infty]$ when $z\in \{a,b\}$. We say that $f$ admits a right derivative at $a$ and a left derivative at $b$ when $f'_{r}(a)$ and $f'_{l}(b)$ are finite respectively. 

The role played by piecewise constant functions in the previous section is here played by piecewise linear functions. For $D \geq 1$, let $\overline{\cO}_{D,k}^1$ be the subset of $\overline{\cM}_k^1$ that consists of those densities that are left-continuous and  affine on each interval of a class $\gI(A)$ with $A \in \cA(D+1)$.  
For example, the left-continuous version of the density of a uniform distribution on a nontrivial interval belongs to $\overline{\cO}_{1,3}^1$. 
The proposition below shows that the elements of $\overline{\cO}_{D,k}^1$ are extremal in $\overline{\cM}_k^1$.
\begin{prop} \label{prop_extr_cvx_cve}
Let $k\ge 2$, $D\ge 1$. If $p\in \overline \cM_{k}^{1}$ and $q\in \overline{\cO}_{D,k}^1$, 
the sets $\{x\in\R,\, p(x) -q(x) > 0\}$ and $\{x\in\R,\, p(x) - q(x) < 0\}$ are unions of at most $D + 2k-1$ intervals. In particular, the elements of $\overline{\cO}_{D,k}^1$ are extremal in $\overline{\cM}_k^1$ with degree not larger than $2(D + 2k -1)$.
\end{prop}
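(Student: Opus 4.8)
The plan is to reduce the statement to two ingredients. First, a geometric count: for every $p\in\overline\cM_k^1$ and every $q\in\overline{\cO}_{D,k}^1$ the sets $\{p>q\}$ and $\{p<q\}$ are unions of at most $D+2k-1$ intervals. Second, the elementary fact that any family of subsets of $\R$ each of which is a union of at most $r$ intervals has VC dimension at most $2r$ (on $2r+1$ points, the alternating ``in/out'' pattern would require $r+1$ pairwise disjoint intervals). Granting the count, the classes $\sC^{>}(\overline\cM_k^1,q)=\{\{p>q\}:p\in\overline\cM_k^1\setminus\{q\}\}$ and $\sC^{<}(\overline\cM_k^1,q)$ are contained in the family of unions of at most $D+2k-1$ intervals, hence are VC with dimension at most $2(D+2k-1)$, which is exactly the claim that $q$ is extremal in $\overline\cM_k^1$ with degree at most $2(D+2k-1)$.

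For the count, fix $A\in\cA(k-1)$ and $B\in\cA(D+1)$ witnessing $p\in\overline\cM_k^1$ and $q\in\overline{\cO}_{D,k}^1$, write $\gI(A)=\{I_1,\dots,I_l\}$ with $2\le l=|A|+1\le k$ (so the outermost pieces $I_1=(-\infty,a_1)$ and $I_l=(a_{l-1},+\infty)$ are half-lines), and subdivide each $I_j$ at the points of $B$ it contains, say $\beta_j$ of them, with $\sum_{j}\beta_j\le|B|\le D+1$. On the interior of each of the $\beta_j+1$ resulting subintervals of $I_j$, the function $h:=p-q$ is the difference of an affine function and a function which is convex (or concave) on $I_j$; hence $h$ is convex on all these subintervals if $p|_{I_j}$ is convex, and concave on all of them if $p|_{I_j}$ is concave. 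I will use two elementary facts: a sublevel set of a convex function is an interval, so when $h$ is convex on an interval $\{h\le 0\}$ is an interval and $\{h>0\}$ is a union of at most two intervals; and a strict superlevel set of a concave function is an interval, so when $h$ is concave on an interval $\{h>0\}$ is a single interval.

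Now count $\{p>q\}\cap I_j$. If $p|_{I_j}$ is concave, each subinterval contributes one interval to $\{p>q\}$, so $\{p>q\}\cap I_j$ is a union of at most $\beta_j+1$ intervals. If $p|_{I_j}$ is convex, each subinterval contributes one interval to $\{p\le q\}$, so $\{p\le q\}\cap I_j$ is a union of at most $\beta_j+1$ intervals and, taking complements inside $I_j$, $\{p>q\}\cap I_j$ is a union of at most $\beta_j+2$ intervals. Summing, and using that at most $l\le k$ pieces carry a convex branch, would yield the crude bound $\sum_j\beta_j+2k\le D+2k+1$. The two superfluous intervals are recovered on $I_1$ and $I_l$: on the unbounded subinterval at the outer end of such a half-line the affine density $q$ is forced to vanish identically, so there $h=p\ge 0$ and, $p$ being monotone, $\{p>q\}$ meets that subinterval in at most one interval while $\{p<q\}$ does not meet it at all; carrying this bookkeeping through replaces the bound $\beta_j+2$ by $\beta_j+1$ for $j\in\{1,l\}$ and gives $\sum_j\beta_j+2(l-2)+2\le(D+1)+2(k-2)+2=D+2k-1$ in total. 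Reassembling the pieces over $\R$ can only merge intervals across the finitely many endpoints in $A$ (after fixing, say, the left-continuous representatives of $p$ and $q$, consistent with the definition of $\overline{\cO}_{D,k}^1$), so the count does not increase. The bound for $\{p<q\}$ follows by the same argument with ``convex'' and ``concave'' interchanged, i.e.\ applied to $-h$.

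The delicate point is precisely this last constant: the per-piece estimate naively produces $D+2k+1$, and squeezing it down to $D+2k-1$ rests on the observation that the two outermost pieces of $\gI(A)$ are half-lines on which the affine density $q$ must be identically $0$, so the ``$+2$'' penalty of a generic convex piece is not incurred there. The remaining pieces of the argument — the two convexity facts and the VC-dimension bound for unions of $r$ intervals — are routine.
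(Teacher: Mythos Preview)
Your proof is correct and follows essentially the same approach as the paper: both partition $\R$ by the breakpoints of $p$, subdivide each piece by the breakpoints of $q$, exploit that $p-q$ is convex or concave on each resulting subinterval, and recover the exact constant $D+2k-1$ by saving one interval on each of the two unbounded outer pieces (where an affine integrable $q$ must vanish). The paper does the bookkeeping with half-open intervals $(a_{i-1},a_i]$ and uses the left-continuity of $q$ to make the endpoint analysis explicit; your statement that ``reassembling the pieces over $\R$ can only merge intervals'' is slightly imprecise (a point of $A$ can in principle be an isolated singleton of $\{p>q\}$), but your worst-case formula $\sum_j\beta_j+2(l-2)+2$ already absorbs this, so the final bound stands. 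The VC bound $2r$ for unions of at most $r$ intervals that you sketch is exactly what the paper invokes via Lemma~1 of Baraud and Birg\'e.
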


\begin{proof}
The proof is postponed to Subsection \ref{subsect-proof-cvxcve}.
\end{proof}

For all $D\geq 1$, $k\geq 2$, let $\cO_{D,k}^1$ be a countable and dense subset of $\overline{\mathcal{O}}_{D,k}^1$ (for the $\L_1$-norm) and $\cM_k^1$ a countable and dense subset of $\overline{\cM}_k^1$ that contains $\bigcup_{D \geq 1} \mathcal{O}_{D,k}^1$.
%
By proposition~\ref{prop-VC-monotone}, the elements of $\cO_{D,k}^1$ are also extremal in $\cM_k^1$ with degree no larger than $2(D+2k-1)$ for all $D \geq 1$. We deduce from Theorem \ref{shape-estimation-th} the following result.
%
\begin{thm}\label{thm_or_ineq_cvx_cve}
Let $k\ge 2$. For every product distribution $\gP\et$ of the data, a TV-estimator $\widehat P$ on $\sM_{k}^{1}$ satisfies 
\begin{equation} \label{or-ineq_cvx_cve_extr_pts}
    \E \left[ \dTV{P\et}{\hat{P}} \right] \leq \inf_{D \geq 1} \left\{ 3 \inf_{P \in \overline{\sO}_{D,k}^1} \dTV{P\et}{P} + 
    48 \sqrt{\frac{2(D+2k-1)}{n}} \right\} + \frac{\varepsilon}{n}.
\end{equation}
\end{thm}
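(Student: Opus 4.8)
The plan is to read off Theorem~\ref{thm_or_ineq_cvx_cve} from Theorem~\ref{shape-estimation-th} applied with $\overline\cM=\overline\cM_{k}^{1}$, the only extra input being the degree bound supplied by Proposition~\ref{prop_extr_cvx_cve}. Concretely, I would first translate Proposition~\ref{prop_extr_cvx_cve} into the language of Theorem~\ref{shape-estimation-th}: it says that every $q\in\overline{\cO}_{D,k}^{1}$ is extremal in $\overline{\cM}_{k}^{1}$ with degree not larger than $2(D+2k-1)$, which means precisely that $\overline{\sO}_{D,k}^{1}\subseteq\overline\sO\bigl(2(D+2k-1)\bigr)$ for every $D\ge 1$, where $\overline\sO(\cdot)$ is the family of extremal probabilities attached to $\overline\cM_{k}^{1}$ as in Theorem~\ref{shape-estimation-th}. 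The transfer of extremality to the countable dense subset $\cM_{k}^{1}$, hence the fact that a TV-estimator on $\sM_{k}^{1}$ is a legitimate object, has already been recorded in the paragraph preceding the statement, so nothing further is needed there; likewise the convention $\inf_{\varnothing}=+\infty$ disposes of the (possibly empty) sets $\overline{\sO}_{D,k}^{1}$ for small $D$ without comment.

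Next I would fix $D\ge 1$ and apply the expectation bound~\eqref{Thm1_risk_bound_expecation_eq} of Theorem~\ref{shape-estimation-th} with the choice of degree $D'=2(D+2k-1)$. Since that inequality is an infimum over all degrees, it gives in particular
\[
\E\bigl[\dTV{P\et}{\widehat P}\bigr]\le 3\inf_{P\in\overline\sO(2(D+2k-1))}\dTV{P\et}{P}+48\sqrt{\frac{2(D+2k-1)}{n}}+\frac{\varepsilon}{n},
\]
and the inclusion $\overline{\sO}_{D,k}^{1}\subseteq\overline\sO(2(D+2k-1))$ lets me replace the infimum over $\overline\sO(2(D+2k-1))$ by the larger infimum over $\overline{\sO}_{D,k}^{1}$. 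It then remains to absorb the $\sqrt2$ into the numerical constant, using $48\sqrt2=67.88\ldots<68$, so that the middle term is bounded by $68\sqrt{(D+2k-1)/n}$, and to take the infimum over $D\ge 1$; this yields exactly~\eqref{or-ineq_cvx_cve_extr_pts}.

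I do not anticipate any genuine obstacle: the whole content of the statement is carried by Proposition~\ref{prop_extr_cvx_cve} (the combinatorial fact that $\{p>q\}$ and $\{p<q\}$ are unions of at most $D+2k-1$ intervals when $p\in\overline\cM_{k}^{1}$ and $q\in\overline\cO_{D,k}^{1}$) together with Theorem~\ref{shape-estimation-th}. The only steps requiring minor care are bookkeeping ones --- correctly matching the degree $D'=2(D+2k-1)$ produced by Proposition~\ref{prop_extr_cvx_cve} with the $\sqrt{D'/n}$ term of Theorem~\ref{shape-estimation-th}, and checking the elementary inequality $48\sqrt2<68$ that produces the constant appearing in~\eqref{or-ineq_cvx_cve_extr_pts}.
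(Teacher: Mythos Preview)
Your proposal is correct and is exactly the argument the paper uses: the paragraph preceding the statement records that $\overline{\cO}_{D,k}^{1}\subset\overline\cO\bigl(2(D+2k-1)\bigr)$ via Proposition~\ref{prop_extr_cvx_cve}, and the theorem is then read off from~\eqref{Thm1_risk_bound_expecation_eq} with $D'=2(D+2k-1)$ and $48\sqrt{2}<68$.
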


In the remaining part of this section we assume that the $X_{i}$ are i.i.d.\ with a density $p\in \overline \cM_{k}^{1}$, in which case, the right-hand side of~\eref{or-ineq_cvx_cve_extr_pts} can be written as  $\B_{k,n}^{1}(p)+\varepsilon/n$ with 
\begin{equation}\label{def-b1kn}
\B_{k,n}^1(p) = \inf_{D \geq 1} \left[ \frac{3}{2} \inf_{q \in \overline{\cO}_{D,k}^1} \norm{p - q} + 48 \sqrt{\frac{2(D + 2k -1)}{n}} \right]. 
\end{equation}
As we did in Section~\ref{sect-kmono}, our aim is to bound the quantity $\B_{k,n}^1(p)$ under some suitable additional assumptions on the density $p$. 

\subsection{Approximation of a monotone convex-concave density by a piecewise linear function}
Let us now turn to the approximation of a monotone convex-concave density by a convex-concave piecewise linear function. The approximation result that we establish is actually true for a {\em sub-density} on an interval $[a,b]$, i.e.\ a nonnegative function on $[a,b]$ the integral of which is not larger than 1. In the remaining part of this chapter, we use the following convenient definition. 
\begin{df}\label{def-linear}
Let $D\ge 1$ and $f$ be a continuous function on a compact nontrivial interval $[a,b]$. We say that $\overline f$ is a $D$-linear interpolation of $f$ on $[a,b]$ if there exists a subdivision $a=x_{0}<\ldots<x_{D}=b$ such that $\overline f(x_{i})=f(x_{i})$ and $\overline f$ is affine on $[x_{i-1},x_{i}]$ for all $i\in\{1,\ldots,D\}$. 
\end{df}

This definition automatically determines the values of $\overline f$ on $[a,b]$ since $\overline f$ corresponds on $[x_{i-1},x_{i}]$ to the chord that connects  $(x_{i-1},f(x_{i-1}))$ to $(x_{i},f(x_{i}))$ for all $i\in\{1,\ldots,D\}$. The function $\overline f$ is therefore continuous and piecewise linear on a partition of $[a,b]$ into $D$ intervals and it inherits of some of the features of the function $f$. For example, if $f$ is nonnegative, increasing, decreasing, convex or concave, so is $\overline f$. If $f$ is convex (respectively concave), $\overline f\ge f$ (respectively $\overline f\le f$).

Given a continuous monotone convex-concave function $f$ with increment $\Delta=(f(b)-f(a))/(b-a)$ on a bounded nontrivial interval $[a,b]$, we define its linear index $\Gamma=\Gamma(f)$ as 
\[
\Gamma=1-\frac{1}{2}\pa{\frac{|p_{r}'(a)|\wedge |p_{l}'(b)|}{|\Delta|}+\frac{|\Delta|}{|p_{r}'(a)|\vee |p_{l}'(b)|}},
\]
with the conventions $0/0=1$ and $1/(+\infty)=0$. Since $f$ is convex-concave, monotone and continuous 
\[
|p_{r}'(a)|\wedge |p_{l}'(b)|\le |\Delta| \le |p_{r}'(a)|\vee |p_{l}'(b)| \quad \text{and}\quad 
\]
and 
\[
\Delta=0\implies |p_{r}'(a)|= |p_{l}'(b)|=0.
\]
With our conventions, $\Gamma$ is well-defined and belongs to $[0,1]$. When $f$ is affine, $\Delta=p_{r}'(a)=p_{l}'(b)$ and 
its linear index is 0.  In the opposite direction when $f$ is far from being affine, say when for some $c\in (a,b)$ and $v>0$
\[
f(x)=\frac{v}{b-c}(x-c)_{+}\quad \text{for all $x\in [a,b]$}
\]
its linear index $\Gamma=1-(b-c)/[2(b-a)]$ increases to 1 as $c$ approaches $b$. 

\begin{thm}\label{thm-approx-conv}
Let $p$ be a monotone, continuous, convex-concave density on a bounded interval $[a,b]$ of length $L>0$ with variation $V=|p(b)-p(a)|$ and linear index $\Gamma\in [0,1]$. For all $D\ge 1$, there exists a $2D$-linear interpolation $\overline p$ of $p$ such that 
\begin{equation}\label{eq-thm-approx-conv}
\int_{a}^{b}\ab{p-\overline p}d\mu\le \frac{4}{3}\cro{\pa{1+\sqrt{2LV\Gamma}}^{1/D}-1}^{2}.
\end{equation}
In particular, there exists a continuous convex-concave piecewise linear density $q$ based on a partition of $[a,b]$ into $2D$ intervals that satisfies
\begin{equation}\label{thm-approx-conv-01}
\int_{a}^{b}\ab{p-q}d\mu\le  5.14\frac{\log^{2}\pa{1+\sqrt{2LV\Gamma}}}{D^{2}}.
\end{equation}
\end{thm}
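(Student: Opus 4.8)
The plan is to approximate $p$ on $[a,b]$ by a $2D$-linear interpolation built from two nested subdivisions, and then convert the bound \eqref{eq-thm-approx-conv} into \eqref{thm-approx-conv-01} by elementary calculus. Since $p$ is monotone and convex-concave on $[a,b]$, write $\Delta=(p(b)-p(a))/(b-a)$ and consider the ``affine part'' $a(x)=p(a)+\Delta(x-a)$. The function $h=p-a$ vanishes at the endpoints, is monotone convex-concave with controlled curvature, and its size is governed by $LV\Gamma$: indeed the linear index $\Gamma$ precisely quantifies how far $p$ is from affine, and one gets a bound of the form $\|h\|_\infty \lesssim LV\Gamma$ or, more usefully, an area/oscillation estimate. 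The key geometric input is a one-dimensional interpolation lemma: if $f$ is convex (resp.\ concave) and monotone on an interval $J$ of length $|J|$ with slopes ranging in an interval of multiplicative width $\rho$, then a single chord approximates $f$ on $J$ with $L^1$-error controlled by (slope range)$\times|J|^2$, and more importantly one can choose a subdivision of $J$ into $D$ pieces so that the interpolation error decays like $[(1+c)^{1/D}-1]^2$ where $c$ measures total ``multiplicative curvature''. This is the convex-concave analogue of Proposition~\ref{prop-birge}, where the monotone case gave a first-power bound $[(1+VL)^{1/D}-1]$; here convexity buys a second power.

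First I would reduce to the convex (equivalently concave) case by restricting to each of the at most two maximal monotone convex pieces — but since $p$ is assumed convex-concave on all of $[a,b]$, it is already convex on $[a,b]$ or concave on $[a,b]$; say convex. Then I would split $[a,b]$ at the point(s) where the derivative passes through the reference slope $\Delta$, reducing to the situation where the derivative $p'$ stays on one side of $\Delta$, so that the shifted function has constant-sign curvature relative to the chord. On such a piece I would choose the subdivision $a=x_0<\dots<x_D=b$ geometrically with respect to the variation of the derivative: concretely, choose the $x_i$ so that the increments $|p'_r(x_i)-p'_l(x_{i-1})|$ (or the relevant multiplicative ratios of slopes, using the $1/(+\infty)=0$ convention for infinite endpoint derivatives) are equalized. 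On each subinterval $[x_{i-1},x_i]$ the error $\int_{x_{i-1}}^{x_i}|p-\overline p|$ of the chord is bounded by a product of the subinterval length and the slope increment over it — essentially $\tfrac12$(length)(slope variation) — and the geometric choice makes the $D$ contributions comparable, yielding $\int_a^b|p-\overline p| \le \tfrac43[(1+\sqrt{2LV\Gamma})^{1/D}-1]^2$ after optimizing the split between the two monotone-curvature halves (that's where the two halves and the factor $\sqrt{2}$, plus $2D$ intervals total, come from). The constant $4/3$ should fall out of bounding $\sum$ of a geometric-type series by an integral.

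Then \eqref{thm-approx-conv-01} follows from \eqref{eq-thm-approx-conv} by two routine inequalities: $(1+u)^{1/D}-1 \le e^{\log(1+u)/D}-1$, and for $D\ge 1$ and $u\ge 0$ one has $e^{t}-1 \le t e^{t}$ with $t=\log(1+u)/D \le \log(1+u)$, so $(1+u)^{1/D}-1 \le \tfrac{\log(1+u)}{D}(1+u)^{1/D} \le \tfrac{\log(1+u)}{D}(1+u)$; squaring and multiplying by $4/3$ gives a bound $\le \tfrac43 (1+u)^2 \tfrac{\log^2(1+u)}{D^2}$ with $u=\sqrt{2LV\Gamma}$, which is not directly of the stated form. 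So instead I would use the sharper elementary bound valid for $D\ge 1$: $(1+u)^{1/D}-1 \le \tfrac{1}{D}\log(1+u)\cdot\alpha$ is false in general, so the right move is $[(1+u)^{1/D}-1]^2 \le [(1+u)-1]\cdot[(1+u)^{1/D}-1]\cdot(\text{something})$ — more cleanly, use that $x\mapsto (1+u)^x-1$ is convex in $x$, hence $(1+u)^{1/D}-1 \le \tfrac1D((1+u)-1)$ is wrong direction; the correct convexity gives $(1+u)^{1/D}-1 \ge \tfrac1D\log(1+u)$ and an upper bound $(1+u)^{1/D}-1 \le \tfrac1D u$ is also not tight enough. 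The honest route is: for $0\le t\le 1$, $(1+u)^t - 1 \le t\,\log(1+u)\,(1+u)^t$ is too lossy, but one does have the clean inequality $a^t-1\le t(a-1)$ for $a\ge1$, $t\in[0,1]$ by concavity of $a\mapsto a^t$ — applying with $a=1+u$, $t=1/D$ gives $(1+u)^{1/D}-1\le u/D$; combined with $u = \sqrt{2LV\Gamma}$ this gives $\tfrac43 \cdot 2LV\Gamma/D^2$, linear in $LV$, not logarithmic. To land on the logarithmic form $5.14\log^2(1+\sqrt{2LV\Gamma})/D^2$ one must interpolate between these two: use $[(1+u)^{1/D}-1]^2 \le [(1+u)^{1/D}-1]\cdot[(1+u)^{1/D}-1]$ and bound one factor by $u/D$ (small-$u$ regime) and be content, OR bound both by $\tfrac1D\log(1+u)(1+u)^{1/D}$ and note $(1+u)^{2/D}\le$ bounded constant only if $D\gtrsim\log(1+u)$; for $D\le\log(1+u)$ one instead notes the left side of \eqref{eq-thm-approx-conv} is trivially $\le 2$ (total $L^1$ mass of two sub-densities), and $2 \le 5.14\log^2(1+u)/D^2$ when $D^2\le \tfrac{5.14}{2}\log^2(1+u)$, i.e.\ exactly the complementary regime. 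Splitting into the two regimes $D \lessgtr c\log(1+\sqrt{2LV\Gamma})$ and using the trivial bound $\le 2$ in one and $[(1+u)^{1/D}-1]^2\le [e^{\log(1+u)/D}-1]^2 \le [\tfrac{\log(1+u)}{D}e^{\log(1+u)/D}]^2 \le e^2\log^2(1+u)/D^2$-type estimates in the other, and optimizing $c$, yields the numerical constant $5.14$.

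The main obstacle I anticipate is the interpolation lemma itself, i.e.\ proving that one can choose a $D$-point subdivision so the chordal $L^1$-error on a monotone convex piece decays like $[(1+\sqrt{2LV\Gamma})^{1/D}-1]^2$ with the precise constant $4/3$ — this is the genuinely new estimate (the convex-concave refinement of Birgé's monotone bound in Proposition~\ref{prop-birge}), and getting the quadratic-in-$D$ decay rather than linear requires exploiting convexity twice (once for the pointwise error being controlled by the chord–tangent gap, once for summing geometrically). Tracking the explicit constants $4/3$ and then $5.14$ through the regime-splitting argument for \eqref{thm-approx-conv-01} is tedious but not conceptually hard; I expect the paper handles the interpolation via an explicit geometric subdivision of the range of $p'$ (or of $\log$ of slopes, with the $1/(+\infty)=0$ convention absorbing the unbounded-derivative case) and a per-interval estimate of the form $\int_{x_{i-1}}^{x_i}|p-\overline p| \le \tfrac12 (x_i-x_{i-1})\,|p'_l(x_i)-p'_r(x_{i-1})| \le \tfrac12 (x_i-x_{i-1})^2 \cdot(\text{curvature})$, then balances. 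The appearance of $\sqrt{2LV\Gamma}$ rather than $LV\Gamma$ confirms the quadratic gain, and the factor $\Gamma$ is exactly what makes the bound vanish when $p$ is affine, mirroring how the $V=0$ case of Proposition~\ref{prop-birge} vanishes.
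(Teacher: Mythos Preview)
Your overall architecture is right --- split at a point $c$ where the derivative crosses the chord slope $\Delta$, approximate each half by a $D$-linear interpolation via a geometric subdivision, and combine --- but there is a genuine missing ingredient. You propose only one type of per-piece bound, controlled by (length)$^2\times$(derivative variation); this is the paper's Proposition~\ref{prop-approx-con1}, whose underlying per-chord estimate is $\int_J|p-\ell_p|\le\tfrac18|J|^2|p'_r-p'_l|$ (Proposition~\ref{prop-approx-convexe}), not your $\tfrac12|J|\cdot|p'_r-p'_l|$. But on the half $[c,b]$ (for $p$ nondecreasing convex, the case treated first), $p'_l(b)$ may be $+\infty$ and this bound is useless there. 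The paper's key additional idea is a \emph{dual} bound (Proposition~\ref{prop-approx-con2}): error controlled by $V^2|1/p'_r(a)-1/p'_l(b)|$, obtained by applying the primal bound to the inverse function $p^{-1}$ via the $L^1$-isometry of Lemma~\ref{lem_sym_cvx_dec}. On $[c,b]$ one has $1/p'_r(c)\le 1/\Delta$, so the reciprocal-derivative variation stays bounded even when $p'_l(b)=+\infty$. The two halves are then merged through the concavity of $u\mapsto[(1+\sqrt u)^{1/D}-1]^2$ (Lemma~\ref{lem_fun_cve}), and the computation
\[
(c-a)^2(\Delta-p'_r(a))+(p(b)-p(c))^2\Bigl(\tfrac{1}{\Delta}-\tfrac{1}{p'_l(b)}\Bigr)\le L^2(\Delta-p'_r(a))+V^2\Bigl(\tfrac{1}{\Delta}-\tfrac{1}{p'_l(b)}\Bigr)=2LV\Gamma
\]
(using $L\Delta=V$) is exactly where the linear index enters. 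Your suggestion of a ``geometric subdivision of the range of $p'$ or of $\log$ of slopes, with the $1/(+\infty)=0$ convention absorbing the unbounded-derivative case'' does not capture this primal/dual switch, and without it you cannot reach the $\sqrt{2LV\Gamma}$ dependence.

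For \eqref{thm-approx-conv-01}, your regime-splitting instinct is correct but you omit the renormalization step: $\overline p$ need not be a density, so one sets $q=\overline p/\int\overline p$ and applies Lemma~\ref{lem_l1_proj} to get $\int|p-q|\le 2[1\wedge\tfrac43((1+u)^{1/D}-1)^2]$ with $u=\sqrt{2LV\Gamma}$. The paper then handles the split cleanly: writing $z=D^{-1}\log(1+u)$, one has $\tfrac43(e^z-1)^2\le 1$ iff $z\le z_0=\log(1+\sqrt{3/4})$, and on $[0,z_0]$ the convex function $e^z-1$ lies below its secant $(e^{z_0}-1)z/z_0$, so $2[1\wedge\tfrac43(e^z-1)^2]=\tfrac83(e^{z\wedge z_0}-1)^2\le\tfrac83((e^{z_0}-1)/z_0)^2z^2\le 5.14\,z^2$ --- a tidier version of your two-regime argument, and one that produces the stated constant.
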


Since $\Gamma\in [0,1]$, \eref{thm-approx-conv-01} implies that 
\[
\int_{a}^{b}\ab{p-q}d\mu\le  5.14\frac{\log^{2}\pa{1+\sqrt{2LV}}}{D^{2}}.
\]
Nevertheless, when $p$ is affine on $[a,b]$, $\Gamma=0$ and we recover the fact that we may choose $\overline p$ and $q$ on $[a,b]$ such that $\int_{a}^{b}\ab{p-\overline p}d\mu=\int_{a}^{b}\ab{p-q}d\mu=0$.

Gu\'erin {\em et al} \cite{Guerin2006} already established that any bounded convex (or concave) function on a compact interval can be approximated by a piecewise affine function on a partition of this interval into $D$-pieces
with an $\L_{1}$-error not larger than $C/D^2$, for some number $C>0$ which is independent of $D$. The novelty in Theorem \ref{thm-approx-conv} lies in the fact that for probability densities the approximation error depends logarithmically on the product $LV$. 

\begin{proof}
The proof is based on several preliminary approximation results whose statements and proofs are postponed to Subsection \ref{subsect-proof-cvxcve}.
\end{proof}

\subsection{Estimation of $k$-piecewise monotone convex-concave\ bounded and compactly supported densities}
In this section, we consider a density $p\in\overline \cM_{k}^{1}$, with $k\ge 3$, that is of the form \eref{def-mbark} except for the fact that $(iii)$ is here replaced by 
\begin{itemize}
\item[$(iii')$] for $i\in\{1,\ldots,k-2\}$, $p_{i}$ is a monotone continuous convex-concave density on the interval $[x_{i-1},x_{i}]$ of length $L_{i}>0$, with variation $V_{i}=\ab{p_{i}(x_{i-1})-p(x_{i})}<+\infty$ and linear index $\Gamma_{i}\in [0,1]$. 
\end{itemize}
Throughout this section, when we refer to the form \eref{def-mbark}, it means with $(iii)$ replaces by $(iii')$. 
For such a density $p$, we define 
\begin{equation}\label{def-Rp1}
\bs{R}_{k,1}(p)=\inf\cro{\sum_{i=1}^{k-2}\pa{w_{i}\log^{2}\pa{1+\sqrt{2L_{i}V_{i}\Gamma_{i}}}}^{1/3}}^{3/2},
\end{equation}
where the infimum runs among all ways of writing $p$ in the form~\eref{def-mbark}. We denote by $\overline \cM_{k,1}^{\infty}$ the class of all densities of the form ~\eref{def-mbark} a.e.\ and for $R>0$, $\overline \cM_{k,1}^{\infty}(R)$ the subset of those which satisfy $\bs{R}_{k,1}(p)<R$.

Note that a concave (or convex) density on a (necessarily) compact interval belongs to $\overline \cM_{4}^{1}$.

The following result holds. 
\begin{thm} \label{thm_cvx_cve}
Let $k\ge 3$ and $R>0$. For all $p\in  \overline \cM_{k,1}^{\infty}(R)$
\begin{equation}\label{eq-cvx_cve}
\B_{k,n}^1(p) \le  7.71\cro{15.06\pa{\frac{R}{n}}^{2/5}+8.82\sqrt{\frac{4k-5}{n}}},
\end{equation}
where $\B_{k,n}^1(p)$ is defined by~\eref{def-b1kn}.
\end{thm}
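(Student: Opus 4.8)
\textbf{Proof plan for Theorem~\ref{thm_cvx_cve}.}
The plan is to bound, for each integer $m\ge 1$, the quantity $\inf_{q\in\overline\cO_{D,k}^1}\norm{p-q}$ for a suitable degree $D=D(m)$, by approximating each piece $p_i$ of the decomposition \eref{def-mbark2} by a continuous convex-concave piecewise linear density on a partition of $[x_{i-1},x_i]$ into $2D_i$ intervals, with $D_i$ chosen to balance the pieces. First I would fix a writing of $p$ under the form \eref{def-mbark2} with $\cro{\sum_{i=1}^{k-2}\pa{w_i\log^2\pa{1+\sqrt{2L_iV_i\Gamma_i}}}^{1/3}}^{3/2}<R$, which is possible by definition of $\overline\cM_{k,1}^\infty(R)$. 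Applying Theorem~\ref{thm-approx-conv} to each $p_i$ with a parameter $D_i\ge 1$ yields a continuous convex-concave piecewise linear density $q_i$ on $[x_{i-1},x_i]$ based on $2D_i$ intervals with $\int\ab{p_i-q_i}d\mu\le 5.14\,\ell_i^2/D_i^2$, where $\ell_i=\log\pa{1+\sqrt{2L_iV_i\Gamma_i}}$. Then $q=\sum_i w_iq_i\1_{(x_{i-1},x_i)}$ is (a left-continuous version of an element arbitrarily close to an element of) $\overline\cO_{D,k}^1$ with $D=\sum_i 2D_i$ plus a bounded number of breakpoints coming from the $k-2$ junctions, and $\norm{p-q}\le 5.14\sum_i w_i\ell_i^2/D_i^2$.

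The next step is the optimisation over the $D_i$. With a total budget $\sum_i D_i\le N$, minimising $\sum_i w_i\ell_i^2/D_i^2$ is a standard Lagrange computation: the optimal allocation is $D_i\propto (w_i\ell_i^2)^{1/3}$, giving $\sum_i w_i\ell_i^2/D_i^2\le N^{-2}\cro{\sum_i (w_i\ell_i^2)^{1/3}}^{3}$; here one must be careful to round $D_i$ to integers $\ge 1$, which inflates the constant and forces the restriction $R>0$ together with an additive $O(k)$ term in the number of intervals. Writing $S=\cro{\sum_i(w_i\ell_i^2)^{1/3}}^{3/2}$, so that $S<R$, one gets $\norm{p-q}\lesssim S^2/N^2$ for a partition into roughly $N+O(k)$ intervals, i.e.\ $\inf_{q\in\overline\cO_{D,k}^1}\norm{p-q}\lesssim (S/D)^2$ with $D\simeq N$. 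Plugging this into \eref{def-b1kn} gives
\[
\B_{k,n}^1(p)\le\inf_{D\ge 1}\cro{C_1\frac{S^2}{D^2}+68\sqrt{\frac{D+C_2k}{n}}},
\]
and one optimises over $D$: balancing $S^2/D^2$ against $\sqrt{D/n}$ gives $D\simeq (S^2\sqrt n)^{2/5}=S^{4/5}n^{1/5}$, hence a term of order $(S^2/\sqrt n)^{4/5}=S^{2/5}\cdot\pa{S/n^{?}}$… more precisely $S^2/D^2\simeq S^{2-8/5}n^{-2/5}=S^{2/5}n^{-2/5}=(S/n)^{2/5}\cdot S^{?}$; carrying the exponents through yields the rate $(R/n)^{2/5}$ plus the residual $\sqrt{(4k-5)/n}$ coming from the $O(k)$ breakpoints, matching \eref{eq-cvx_cve} after tracking the numerical constants ($5.14$ from Theorem~\ref{thm-approx-conv}, $68$ from Theorem~\ref{thm_or_ineq_cvx_cve}, and the rounding losses). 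Since $S<R$ was arbitrary subject to $S<R$, one lets $S\uparrow R$ at the end.

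The main obstacle I expect is the bookkeeping of constants and, more substantively, the integer-rounding in the Lagrange step: allocating $D_i=\max\pa{1,\lceil N(w_i\ell_i^2)^{1/3}/\sum_j(w_j\ell_j^2)^{1/3}\rceil}$ means that pieces with very small $w_i\ell_i^2$ still cost one interval each, which is exactly why the $k$-dependent term $\sqrt{(4k-5)/n}$ appears separately rather than being absorbed; making this rigorous requires splitting the sum according to whether the idealised $D_i$ exceeds $1$ or not and controlling each part. A secondary technical point is the passage from $q=\sum_iw_iq_i\1_{(x_{i-1},x_i)}$, which is only piecewise affine and left-continuous up to the junction values, to a genuine element of the countable dense set $\cO_{D,k}^1$; this is handled by the density argument recorded after Assumption~\ref{Ass00} (replacing $\overline\cO_{D,k}^1$ by $\cO_{D,k}^1$ changes nothing in the infimum), so it costs nothing but must be invoked. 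Finally one should check that the total number of intervals carrying $q$, namely $\sum_i 2D_i$ together with the $k-2$ cut points $x_i$, is indeed bounded by $2(D+2k-1)$-type quantity consistent with the degree bound of Proposition~\ref{prop_extr_cvx_cve}, so that Theorem~\ref{thm_or_ineq_cvx_cve} applies with the announced $D$.
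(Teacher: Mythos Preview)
Your proposal is correct and follows essentially the same route as the paper: apply Theorem~\ref{thm-approx-conv} piecewise, allocate $D_i\propto (w_i\ell_i^2)^{1/3}$, and optimise the total budget against the $\sqrt{D/n}$ term. Two small simplifications compared with what you anticipate: the rounding does not require any case-splitting, since taking $D_i=\lceil s_0 s_i/\sum_j s_j\rceil$ gives directly $\sum_i D_i\le s_0+(k-2)$, which is exactly where the additive $k$ enters; and no passage to $\cO_{D,k}^1$ is needed because $\B_{k,n}^1$ is defined via $\overline\cO_{D,k}^1$. Also keep track of the factor $2$: the glued density $q$ lies in $\overline\cO_{2D,k}^1$, so you plug $2D$ (not $D$) into~\eref{def-b1kn} before optimising.
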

%
Theorem \ref{thm_cvx_cve} together with Theorem \ref{thm_or_ineq_cvx_cve} imply  that the TV-estimator converges at rate $n^{- 2/5}$ in total variation distance, whenever the underlying density $p\et$ is bounded, compactly supported and belongs to the class $\overline{\cM}_{k,1}^{\infty}$ of $k$-piecewise monotone convex-concave densities. The rate $n^{- 2/5}$ matches the minimax lower bound established in Devroye and Lugosi~\cite{MR1843146}[Section~15.5]  for bounded convex densities. This rate is therefore optimal.

To the best of our knowledge, Theorem \ref{thm_cvx_cve} provides the sharpest known minimax upper bound in this setting, including the case of a monotone, convex or concave density on a compact interval. In comparison, Gao and Wellner \cite{MR2520591} proved that the MLE on the set of convex non-increasing densities on a given interval achieves the rate $n^{- 2/5}$ (for the Hellinger distance). Note that the construction of the MLE requires that the support of the target density be known while our TV-estimator assumes nothing.


Consider now the special case of a continuous, concave density on an interval $[a,b]$ (which is necessarily bounded).
A monotone continuous concave density $p$ on a bounded interval $[a,b]$ with length $L>0$ belongs to $\overline \cM_{3}^{1}$. It necessarily satisfies $L|p(a)-p(b)|/2\le 1$, hence $\bs{R}_{3,1}(p)\le  \log(1+\sqrt{2L|p(a)-p(b)|})\le \log 3$. If $p$ is not monotone but only continuous and concave on $[a,b]$, we may write $p$ as $w_{1}p_{1}+w_{2}p_{2}$ where $p_{1}$ and $p_{2}$ are monotone and concave densities on the intervals $[a,c]$ and $[c,a]$ respectively where $c$ is a maximizer of $p$ in $(a,b)$. The density $p$ belongs to $\overline \cM_{4}^{1}$ and by applying the previous inequality to the densities $p_{1}$ and $p_{2}$ successively and the inequality $z^{1/3}+(1-z)^{1/3}\le 2^{2/3}$ which holds for all $z\in [0,1]$, we obtain that
\begin{align*}
\bs{R}_{4,1}^{2/3}(p)&\le \pa{w_{1}\log^{2}(1+\sqrt{2(c-a)\pa{p_{1}(c)-p_{1}(a)}}}^{1/3}
+ \pa{w_{2}\log^{2}(1+\sqrt{2(b-c)\pa{p_{2}(c)-p_{2}(b)}}}^{1/3}\\
&\le \pa{w_{1}^{1/3}+w_{2}^{1/3}}(\log 3)^{2/3}\le (2\log 3)^{2/3}.
\end{align*}
We immediately deduce from Theorems~\ref{thm_or_ineq_cvx_cve} and~\ref{thm_cvx_cve} the following corollary.  
\begin{cor}\label{cor-cvx_cve}
If $X_{1},\ldots,X_{n}$ are i.i.d.\ with a density that is concave on an interval of $\R$, a TV-estimator $\widehat p$ on $\cM_{4}^{1}$ satisfies 
\[
\E\cro{\norm{p-\widehat p}}\le \frac{320}{n^{2/5}}+\frac{451}{\sqrt{n}}+\frac{2\varepsilon}{n}.
\]
In particular,
\[
\adjustlimits\inf_{\widetilde p}\sup_{p}\E\cro{\norm{p-\widetilde p}}\le \frac{320}{n^{2/5}}+\frac{451}{\sqrt{n}},
\]
where the supremum runs among all concave densities $p$ on an interval of $\R$ and the infimum over all density estimators $\widetilde p$ based on a $n$-sample with density $p$.
\end{cor}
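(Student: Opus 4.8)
The plan is to read the statement off Theorems~\ref{thm_or_ineq_cvx_cve} and~\ref{thm_cvx_cve} specialized to $k=4$; in fact the substantive part has already been isolated in the paragraph preceding the corollary, and what remains is a short assembly together with the arithmetic. First I would pass to a convenient representative of the data density. A concave integrable density must be supported on a bounded interval (otherwise it would fail to be integrable), is bounded, is continuous on the interior of that interval, and has finite one-sided limits at its two endpoints; modifying $p$ on the (negligible) two-point endpoint set so that it equals those limits keeps it concave, so we may assume $p$ is a continuous concave density on a compact interval $[a,b]$. Since $\B_{4,n}^{1}$ depends only on the $\L_{1}$-equivalence class of its argument, this modification is harmless. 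Taking $c\in[a,b]$ at which $p$ is maximal, $p$ is nondecreasing concave on $[a,c]$ and nonincreasing concave on $[c,b]$, hence $p\in\overline{\cM}_{4}^{1}$ and $p$ has the form~\eref{def-mbark2} with the two (possibly degenerate, when $c\in\{a,b\}$) pieces $p_{1},p_{2}$.

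Next I would bound $\bs R_{4,1}(p)$, following verbatim the computation displayed just before the corollary. For a monotone concave density $q$ on an interval of length $\Lambda$ with variation $W$, concavity forces $q$ to lie above its chord, so $1\ge\int q\ge\Lambda W/2$, whence $\Lambda W\le 2$; and since the linear index is at most $1$, $\log^{2}\pa{1+\sqrt{2\Lambda W\Gamma}}\le\log^{2}3$. Applying this to $p_{1}$ and $p_{2}$ and invoking $z^{1/3}+(1-z)^{1/3}\le 2^{2/3}$ on $[0,1]$ gives $\bs R_{4,1}(p)\le 2\log 3<2.2$, so $p\in\overline{\cM}_{4,1}^{\infty}(R)$ with, say, $R=2.2$.

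Then the assembly. Theorem~\ref{thm_cvx_cve} with $k=4$ and $R=2.2$ (so $4k-5=11$) yields $\B_{4,n}^{1}(p)\le 7.71\cro{15.06\pa{2.2/n}^{2/5}+8.82\sqrt{11/n}}$. Because the observations are i.i.d.\ we have $P\et=p\cdot\mu$, so the oracle inequality of Theorem~\ref{thm_or_ineq_cvx_cve} reads $\E\cro{\dTV{P\et}{\widehat P}}\le\B_{4,n}^{1}(p)+\varepsilon/n$; multiplying by $2$ and using $\dTV{P}{Q}=\tfrac12\norm{p-q}$ gives $\E\cro{\norm{p-\widehat p}}\le 2\B_{4,n}^{1}(p)+2\varepsilon/n$, and evaluating $2\cdot 7.71\cdot 15.06\cdot 2.2^{2/5}$ and $2\cdot 7.71\cdot 8.82\cdot\sqrt{11}$ produces the announced constants $320$ and $451$. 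For the minimax bound I would note that this estimate is uniform over all concave densities on all intervals of $\R$ (it depends on $p$ only through concavity, not on $a,b$), so for each $\varepsilon>0$ the worst-case $\L_{1}$-risk of the corresponding TV-estimator on $\cM_{4}^{1}$ is at most $320\,n^{-2/5}+451\,n^{-1/2}+2\varepsilon/n$; since the infimum over all density estimators ranges in particular over the family of TV-estimators indexed by $\varepsilon>0$, letting $\varepsilon\downarrow 0$ removes the last term.

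There is no real obstacle here: the corollary is a corollary. The only points requiring care are bookkeeping ones — the reduction to a continuous version of $p$ on a compact interval and the verification that its splitting at the mode genuinely matches~\eref{def-mbark2} (in particular the degenerate case where $p$ is already monotone), together with the numerical checks that $2\log 3<2.2$ and that the resulting doubled constants do fit under $320$ and $451$.
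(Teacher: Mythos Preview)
Your proposal is correct and follows the paper's own route: the paragraph immediately preceding the corollary already establishes $\bs{R}_{4,1}(p)\le 2\log 3$ for any continuous concave density on a compact interval, and the corollary is then obtained by plugging $k=4$ and any $R>2\log 3$ into Theorem~\ref{thm_cvx_cve}, combining with Theorem~\ref{thm_or_ineq_cvx_cve}, and doubling to pass from $d$ to $\norm{\cdot}$. Your handling of the minimax statement (letting $\varepsilon\downarrow 0$ over the family of TV-estimators) is the intended reading, and your remarks on the continuous representative and the possibly degenerate split at the mode are exactly the bookkeeping the paper leaves implicit.
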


\begin{proof}
Let $p\in\overline \cM_{k,1}^{\infty}(R)$. Without loss of generality we may assume that $p$ is of the form \eref{def-mbark} everywhere and choose a subdivision $(x_{i})_{i\in\{0,\ldots,k-2\}}$ in such a way that 
\[
\cro{\sum_{i=1}^{k-2}\pa{w_{i}\log^{2}\pa{1+\sqrt{2L_{i}V_{i}\Gamma_{i}}}}^{1/3}}^{3/2}\le R.
\]

Let $D\ge k-2$ and $D_{1},\ldots,D_{k-2}$ be some positive integers to be chosen later on that satisfy the constraint $\sum_{i=1}^{k-2}D_{i}\le D$. By Theorem~\ref{thm-approx-conv}, we may find for all $i\in\{1,\ldots,k-2\}$ a density $q_{i}$ that is continuous and supported on $[x_{i-1},x_{i}]$, piecewise linear on a partition of $[x_{i-1},x_{i}]$ into $2D_{i}$ intervals, that  satisfies 
\[
\int_{x_{i-1}}^{x_{i}}\ab{p_{i}-q_{i}}d\mu\le  5.14\frac{\log^{2}\pa{1+\sqrt{2L_{i}V_{i}\Gamma_{i}}}}{D_{i}^{2}}.
\]
The function $q=\sum_{i=1}^{k-2}w_{i}q_{i}\1_{(x_{i-1},x_{i}]}$ is a density, that is left-continuous, convex-concave on each interval $I\in\gI(\{x_{0},\ldots,x_{k-2}\})$ and affine on each interval of a partition $(x_{0},x_{k-2}]=\bigcup_{i=1}^{k-2}(x_{i-1},x_{i}]$ into $\sum_{i=1}^{k-2}2D_{i}\le 2D$ intervals.  It therefore belongs to $\overline \cO_{2D,k}$. Besides,
\begin{align*}
\norm{p-q}&\le \sum_{i=1}^{k-2}w_{i}\int_{x_{i-1}}^{x_{i}}\ab{p_{i}-q_{i}}d\mu\le 5.14\sum_{i=1}^{k-2}\frac{w_{i}\log^{2}\pa{1+\sqrt{2L_{i}V_{i}\Gamma_{i}}}}{D_{i}^{2}}
\end{align*}
and it follows from~\eref{def-b1kn} that
\begin{align*}
\B_{k,n}^1(p) &\le \frac{3}{2} \inf_{q \in \overline{\cO}_{2D,k}^1} \norm{p - q} + 68 \sqrt{\frac{2D + 2k -1}{n}}\\
&\le 7.71\sum_{i=1}^{k-2}\frac{w_{i}\log^{2}\pa{1+\sqrt{2L_{i}V_{i}\Gamma_{i}}}}{D_{i}^{2}}+68 \sqrt{\frac{2D + 2k -1}{n}}. 
\end{align*}
Let us set $c=68/7.71$, $s_{0}=[nR^{4}/(2c^{2})]^{1/5}$, 
\[
s_{i}=\cro{w_{i}\log^{2}\pa{1+\sqrt{2L_{i}V_{i}\Gamma_{i}}}}^{1/3}\quad \text{for all $i\in\{1,\ldots,k-2\}$}
\]
and choose $D=s_{0}+k-2$ and 
\[
D_{i}=\left\lceil\frac{s_{0}s_{i}}{\sum_{j=1}^{k-2}s_{j}}\right\rceil\ge \frac{s_{0}s_{i}}{\sum_{j=1}^{k-2}s_{j}}\vee 1\quad \text{for all $i\in\{1,\ldots,k-2\}$},
\]
so that $\sum_{i=1}^{k-2}D_{i}\le D$. Then, 
\begin{align*}
\B_{k,n}^1(p) &\le 7.71\cro{\sum_{i=1}^{k-2}\frac{s_{i}^{3}}{D_{i}^{2}}+c \sqrt{\frac{2D + 2k -1}{n}}}\le  7.71\cro{\frac{\pa{\sum_{j=1}^{k-2}s_{j}}^{3}}{s_{0}^{2}}+c \sqrt{\frac{2s_{0}+4k-5}{n}}}\\
&\le 7.71\cro{\frac{R^{2}}{s_{0}^{2}}+c \sqrt{\frac{2s_{0}}{n}}+c\frac{\sqrt{4k-5}}{n}}=7.71\cro{2(2c^{2})^{2/5}\frac{R^{2/5}}{n^{2/5}}+c\frac{\sqrt{4k-5}}{n}}
\end{align*}
which gives \eref{eq-cvx_cve}. 
\end{proof}

\section{The case of $s$-concave densities and their generalisations}\label{section-s-shape}
Given an increasing function $\cL$ on $(0,+\infty)$ and a density $p$ for which $\{p>0\}$ is an open subinterval of $\R$, we define 
\[
\cL p:x\mapsto 
\begin{cases}
\cL\circ p(x) & \text{when $x\in \{p>0\}$}\\
-\infty & \text{otherwise.}
\end{cases}
\]
A density $p$ is said to be {\em $\cL$-concave} if $\cL p$ is concave on $\{p>0\}$. 
We denote by $\overline\cM(\cL)$ the class of $\cL$-concave densities and for $D\ge 1$, $\overline\cO_{D}(\cL)$ the subset of $\overline \cM(\cL)$ which consists of these densities $\overline p$ for which $\cL\overline p$ is either affine or takes the value $-\infty$ on the elements of a partition $\gI(A)$, $A\in \cA(D)$, containing thus at most $D+1$ intervals. Note that the uniform distributions on an interval belong to $\overline\cO_{D}(\cL)$ for all $D\ge 2$, which shows in passing that the sets $\overline \cM(\cL)$ and $\overline\cO_{D}(\cL)$ are nonempty for every choices of $\cL$ and $D\ge 2$. As before, we denote by 
$\cO_{D}(\cL)$ a countable and dense subset of $\overline \cO_{D}(\cL)$ for $D\ge 1$ and $\cM(\cL)$ a countable and dense subset of $\overline \cM(\cL)$ which contains $\bigcup_{D\ge 1}\cO_{D}(\cL)$. The sets $\sO_{D}(\cL),\overline \sO_{D},\sM(\cL)$ and $\overline \sM(\cL)$ are the classes of probability distributions on $\R$ that are associated with the sets of densities $\cO_{D}(\cL),\overline \cO_{D}(\cL),\cM(\cL)$ and $\overline \cM(\cL)$ respectively.

The functions $\cL$ we have in mind are typically of the  form $\cL=\cL_{s}$ with $s\in\R$ and defined for $u>0$ by 
\[
\cL_{s}(u)=
\begin{cases}
-u^{s}&\text{when $s<0$}\\
\log u &\text{when $s=0$}\\
u^{s} &\text{when $s>0$.}
\end{cases}
\]
In the literature, a $\cL_{s}$-concave density is called {\em $s$-concave} and  log-concave in the special case $s=0$. For $s<s'$, the reader can check that $\overline \cM(\cL_{s'})\subset \overline \cM(\cL_{s})$, hence that the family $(\overline \cM(\cL_{s}))_{s\in\R}$ is nonincreasing with respect to set inclusion.

Seregin and Wellner \cite{Ser-Well2010} proved that the MLE on $\overline \cM(\cL_{s})$ exists for those $s>-1$ (at least for $n$ large enough). The following result shows that it is unstable at least for those $s\ge 1$.
 \begin{prop}
 Given $n\ge 5$ and $c\ge 5$, consider the distribution $P\et=(1-1/n)P_{0}+(1/n)\delta_{c}$ where $P_{0}$ denotes the uniform distribution on $(0,1)$. Let $\widehat L_{n}$ be the likelihood function on $\cM(\cL_{s})$, with $s\ge 1$, based on an $n$-sample with distribution $P\et$.  With a probability at least 63\%, for any density $p_{s}\in \cM(\cL_{s})$ such that $\widehat L_{n}(p_{s})>0$, the associated distribution $P_{s}=p_{s}\cdot\mu\in \sM(\cL_{s}) $ satisfies 
\[
d(P_{0}, P_{s})\ge \frac{1}{2}\pa{1-\frac{2}{c-1}}\ge \frac{1}{4} \quad \text{while}\quad P_{0}\in \sM(\cL_{s})\; \text{ and }\; d(P_{0},P\et)= \frac{1}{n}.
\]
 \end{prop}
%

\begin{proof}
Let $X_{(1)},\ldots,X_{(n)}$ the order statistics associated to the $n$-sample with distribution $P\et$. Since $c>1$, 
\[
\P\cro{X_{(1)}=c}=\P(X_{1}=\ldots=X_{n}=c)=n^{-n}\quad \text{and}\quad \P\cro{X_{(n)}\ne c}=\pa{1-\frac{1}{n}}^{n}<e^{-1}.
\]
We deduce that the probability of the set $\Omega=\{0\le X_{(1)}<X_{(n)}=c\}$ is at least $1-e^{-1}-n^{-n}>63\%$ for $n\ge 5$. 
Since $s\ge 1$, a density $p_{s}$ in $\cM(\cL_{s})$ is continuous and concave on $\{p_{s}>0\}$, $\{p_{s}>0\}$ is a bounded interval of the form $(a,b)$ with $a<b$ and $p_{s}$ can be extended continuously at $a$ and $b$. In particular, there exists $z\in [a,b]$ such that $p_{s}(z)=\sup_{x\in [a,b]}p_{s}(x)$. Besides, the condition $\widehat L_{n}(p_{s})>0$ implies that on $\Omega$, $a\le X_{(1)}\le 1\le X_{(n)}=c\le b$ and since the graph of the density $p_{s}$ on $[a,b]$ lies above its chord,  
\[
1= \int_{a}^{b}p_{s}(x)dx\ge \frac{p_{s}(z)(b-a)}{2}\ge \frac{p_{s}(z)(c-1)}{2}.
\]
Hence, $p_{s}(z)\le 2/(c-1)<1$ for $c>3$ and we conclude that
\[
d(P_{0},P_{s})\ge \frac{1}{2}\int_{0}^{1}\ab{1-p_{s}(x)}dx\ge \frac{1}{2}\int_{0}^{1}\pa{1-p_{s}(x)}dx\ge \frac{1}{2}\pa{1-\frac{2}{c-1}}.
\]
\end{proof}

\subsection{Extremal points and risk bounds over $\overline \cM(\cL)$}

The following result shows that the elements of $\overline \cO_{D}(\cL)$ are extremal.
\begin{prop}\label{prop-cL}
For all $D\ge 1$, the elements of $\overline \cO_{D}(\cL)$ are extremal in $\overline \cM(\cL)$ with degree not larger than $2(D+2)$.
\end{prop}
\begin{proof}
The proof is postponed to Section~\ref{subsect-proof-sshape}.
\end{proof}

 We immediately deduce from Theorem \ref{shape-estimation-th} the following risk bound for the TV-estimator $\widehat P=\widehat p\cdot \mu$ on $\overline \sM(\cL)$. 
\begin{thm}\label{thm-cL-concave}
For every product distribution $\gP\et$ of the data, a TV-estimator $\widehat P$ on $\sM(\cL)$ satisfies 
\begin{equation}\label{eq-Thm-cL-concave}
\mathbb{E}\cro{\dTV{P \et}{\widehat P}} \leq  \inf_{D\ge 1}\cro{3\inf_{P\in\overline \sO_{D}(\cL)}\dTV{P \et}{P} + 48\sqrt{2}\sqrt{\frac{D+2}{n}}}+\frac{\varepsilon}{n}.
\end{equation}
\end{thm}
As a consequence, if the data are i.i.d.\ with a density $p\et$ that belongs to $\overline \cO(\cL)=\bigcup_{D\ge 1}\overline \cO_{D}(\cL)$, the density $\widehat p\in\overline \cM(\cL)$ of the TV-estimator converges in $\L_{1}$  to $p\et$ at rate $1/\sqrt{n}$. When $\cL$ is continuous, hence one-to-one from $(0,+\infty)$ onto its range, this result applies to these densities $p\et$ of the form $\cL^{-1}(-a|x-c|+b)\1_{I}(x)$ where $a\in [0,+\infty)$, $b,c\in\R$ and $I$ is an open interval. This includes all uniform densities on $\R$ (even when $\cL$ is not continuous). In the special case where $\cL=\cL_{s}$ with $s\in (-1,0)$, densities of the form 
\[
p(x)=\pa{\frac{1}{a|x|+b}}^{1/|s|}\quad \text{or}\quad  p(x)=\pa{\frac{1}{ax+b}}^{1/|s|}\1_{x>0}\quad \text{with $a,b>0$}
\]
and their translated versions belong to $\overline \cO(\cL_{s})$. 

Of special interest is the case $s=0$, which corresponds to log-concave densities. As a result, we obtain that our TV-estimator on the set of log-concave densities estimates at rate $1/\sqrt{n}$ densities of the form $x\mapsto 2a\exp\cro{-a|x-b|}$ or $x\mapsto a\exp\cro{-a(x-b)}\1_{x>b}$ with $a>0$ and $b\in\R$.  Our estimator shares thus some adaptation property which is of the same flavour as those established for the MLE by Kim {\em et al}~\cite{MR3845018} (and Feng {\em et~al.} \cite{Feng2021}), and by Baraud and Birg\'e~\cite{MR3565484} for their $\rho$-estimator. As compared to theirs, our upper bound does not involve logarithmic factors. When considering the Hellinger loss, these logarithmic factors are sometimes necessary while they disappear for the total variation one. Nevertheless, the paper by Kim {\em et al}~\cite{MR3845018} additionally contains a risk bound for the MLE with respect to total variation distance (see their Theorem~1). For the exponential distribution (and their translated versions), they obtain a rate of convergence which is $(\log n)/\sqrt{n}$ for the MLE while our estimator would converge at the faster rate $1/\sqrt{n}$. We do not know whether or not this extra logarithmic factor for the MLE is technical or necessary. In the latter case, this would mean that the MLE is slightly sub-optimal for estimating exponential distributions among the set of log-concave densities. Nevertheless, the authors also exhibit examples of log-concave densities for which this extra logarithmic factor can be removed. For these cases, both the MLE and our TV estimator converge at the rate $1/\sqrt{n}$. 


In order to specify further the risk bound \eref{eq-Thm-cL-concave}, we need to investigate the approximation properties of $\overline \sO_{D}(\cL)$ with respect to $P\et$. In the remaining part of this section, we shall restrict ourselves to the case where $\cL$ is either convex or concave on $(0,+\infty)$ and the data  i.i.d.\ with a density $p\et\in\overline \cM(\cL)$. Within this framework, our aim is to study the approximation properties of the sets of densities $\overline \cO_{D}(\cL)$, $D\ge 1$, with respect to the target density $p\et$ for the $\L_{1}$-norm.

\subsection{Approximation properties of $\overline \cO_{D}(\cL)$ when $\cL$ is convex.}
In this section, we study the case where the function $\cL$ is convex which includes functions of the form $\cL=\cL_{s}$ with $s\ge 1$. 
When the function $\cL$ is not only increasing but also convex, it is continuous on $(0,+\infty)$ and its admits an inverse $\cL^{-1}$, defined on the range of $\cL$, which is increasing, continuous and concave. Since $p=\cL^{-1}(\cL p)$ on $\{p>0\}$, the density $p$ is necessarily concave on the interval $\{p>0\}$ and can therefore be well approximated by piecewise affine functions. However, such functions may not belong to the sets $\overline \cO_{D}(\cL)$. Nevertheless, we show that the approximation properties of the elements of $\overline \cO_{D}(\cL)$ with respect to $p$ are, up to a factor 2, as good as those we would get by using piecewise affine functions. 
\begin{prop}\label{prop-astGuillaume}
Let $\cL$ be a convex increasing function on $(0,+\infty)$ and $p$ an element of $\overline \cM(\cL)$. The set $\{p>0\}$ is a bounded open interval $(a,b)$ with $a<b$ and $p$ can be extended continuously on $[a,b]$. For all $D\ge 1$, there exist a $2D$-linear interpolation $\overline p$ of $p$ on $[a,b]$ as well as a density $s\in \overline{\cO}_{2D+1}(\cL)$ such that
\begin{equation}\label{eq-astGuillaume1}
\int_{\R} |p -s| d\mu \leq 2\int_{a}^{b} |p -\overline p| d\mu \leq \frac{2}{D^2}. 
\end{equation}
%
%
\end{prop}
In particular, this result applies to $\cL=\cL_{s}$ with $s\ge 1$. 
\begin{proof} 
The proof of the proposition is postponed to Section \ref{subsect-proof-sshape}.
\end{proof}

\subsection{Approximation properties of $\overline \cO_{D}(\cL)$ when $\cL$ is concave.}
In this section we investigate the situation where the function $\cL$ is concave. 
\begin{thm} \label{thm-approx-scv}
Assume that the function $\cL$ is increasing and concave on $(0,+\infty)$ and that there exist $\lambda,\kappa>1$ such that 
\begin{equation}\label{eq-condcL}
\lambda^{2}\cL_{l}'(\lambda y)\ge \kappa \cL_{r}'(y)\quad \text{for all $y>0$.}
\end{equation}
Then, set  
\begin{equation}\label{eq-cL-defR}
\gamma=\gamma(\lambda,\kappa)=\cro{\frac{2}{\log \kappa}\pa{\frac{\kappa}{\kappa-1}}^{1/2}+\frac{\lambda\sqrt{\lambda-1}
}{\sqrt{2 \kappa }}\pa{1+\frac{1}{2(\sqrt{\kappa}-1)}}}^{2}.
\end{equation}
For any $p \in \overline{\cM}(\cL)$ and any integer $D \ge 1$, there exists an extremal point $\overline{p} \in \overline{\cO}_{2D+3}(\cL)$ such that
\begin{equation} \label{eq-bd-approx-scv}
\int_{\mathbb{R}} |p(x) - \overline{p}(x)| dx \leq \frac{2 \gamma}{D^2}.
\end{equation}
\end{thm}

\begin{proof}
The proof is postponed to Section~\ref{subsect-proof-sshape}. 
\end{proof}

Let us comment on condition \eref{eq-condcL}. If we wish to interpret the fact that a density $p$ belongs to the set $\overline \cM(\cL)$ as a condition on its shape, it is natural to require that this shape remain unchanged under a location-scale transformation: the density $p_{\sigma}:x\mapsto \sigma p(\sigma x)$ with $\sigma>0$ should therefore belong to $\overline \cM(\cL)$ if $p$ does. 
Let $p\in \overline \cM(\cL)$ and assume for the sake of simplicity that both $\cL$ and $p$ are differentiable so that the derivatives of $\cL p$ and $\cL p_{\sigma}$  are respectively given  by 
\[
(\cL p)'(x)=p'(x)\cL'[p(x)]\quad \text{and}\quad (\cL p_{\sigma})'(x)= p'(\sigma x)\sigma^{2}\cL'[\sigma p(\sigma x)]\quad \text{for all $x\in\R$.}
\]
Under the assumption 
\begin{equation}\label{eq-compare}
\forall \sigma>0,\exists c>0\;\text{ such that }\; \sigma^{2}\cL'(\sigma y)=c\cL'(y)\;\text{ for all $y>0$,}
\end{equation}
%
the following equality holds 
\[
(\cL p_{\sigma})'(x)=c p'(\sigma x)\cL'[p(\sigma x)]=c(\cL p)'(\sigma x)\quad \text{for all $x\in\R$.}
\]
Since $(\cL p)'$ is nonincreasing because $\cL p$ is concave, so is $(\cL p_{\sigma})'$ and we conclude that under \eref{eq-compare} the densities $p_{\sigma}$ with $\sigma>0$ also belong  $\overline \cM(\cL)$. The set $\overline \cM(\cL)$ is stable under a scale transformation. 
Condition \eref{eq-compare} is satisfied for the functions $\cL_{s}$ with $s\in\R$ that are associated to the so-called $s$-concavity property. In our framework, we do not require that the set $\overline \cM(\cL)$ remain invariant under rescaling. From this point of view,  the fact that $p$ belongs to $\overline \cM(\cL)$ may not be interpreted in general as a genuine condition on the shape of $p$. Nevertheless, \eref{eq-condcL} and \eref{eq-compare}
share some similarities. On the one hand, our condition \eref{eq-condcL} only requires that the inequality hold in \eref{eq-compare} for given values of $\sigma=\lambda$ and $c=\kappa$, on the other hand we impose the constraint that these values be both larger than 1. 

The functions $\cL = \cL_s$ are differentiable on $(0,+\infty)$ and they satisfy the assumptions of Theorem \ref{thm-approx-scv} when $s\in (-1,1]$. More precisely, since $\cL_{s}'(y)=|s|y^{s-1}$ for $s\in (-1,1]\setminus\{0\}$ and $\cL_{0}'(y)=y^{-1}$ for all $y>0$, $\lambda^{2}\cL_{s}'(\lambda y)/\cL_{s}'(y)=\lambda^{s+1}$ for all $\lambda>1$ and we may then choose $\kappa=\lambda^{s+1}$. Consequently, for a given choice of $\lambda>1$, the value of $\gamma$ involved in \eref{eq-bd-approx-scv} can be chosen as 
\[
\gamma=\gamma(\lambda,\lambda^{s+1})=\cro{\frac{2}{(s+1)\log \lambda}\pa{\frac{\lambda^{s+1}}{\lambda^{s+1}-1}}^{1/2}+\frac{\lambda^{(1-s)/2}\sqrt{\lambda^{s+1}-1}
}{\sqrt{2}}\pa{1+\frac{1}{2(\sqrt{\lambda^{s+1}}-1)}}}^{2}.
\]
 For values of $s\in\{-1/2,0,1/2,1\}$ and the choice $\lambda=3.3$, which approximately gives the minimal value of $\lambda\mapsto \gamma(\lambda,\lambda^{s+1})$, we obtain the following upper bounds for $\gamma=\gamma(s)$
\begin{equation}\label{eq-gamma(s)}
\gamma(-1/2)<130,\ \gamma(0)<26.5,\ \gamma(1/2)=10.1\quad \text{and}\quad \gamma(1)<4.8.
\end{equation}

\subsection{Risk bounds of the TV-estimator over sets of $\cL$-concave densities.}
Combining the risk bound established for the TV-estimator $\widehat P=\widehat p\cdot \mu$ 
in Theorem~\ref{thm-cL-concave} with the approximation properties of the sets $\overline \cO_{D}(\cL)$ provided by Proposition~\ref{prop-astGuillaume} and Theorem~\ref{thm-approx-scv} , we derive the following result.     

\begin{cor} \label{cor-cL-cve-rates}
Let $X_1,\ldots,X_n$ be an $n$-sample with density $p\et \in \overline{\cM}(\cL)$ and $\widehat p\in \overline{\cM}(\cL)$ the density of the TV-estimator. The following results hold.
\begin{itemize}
\item[(i)] If $\cL$ is convex and increasing on $(0,+\infty)$,  
\begin{equation}\label{eq-cL-cve-rates}
\E\cro{\norm{p\et-\widehat p}}\le \frac{192}{n^{2/5}}+\frac{96\sqrt{10}}{\sqrt{n}}+\frac{2\varepsilon}{n}.
\end{equation}
In particular, this result applies to the family of $s$-concave densities $\overline \cM(\cL_{s})$ with $s\ge 1$. 
\item[(ii)] if $\cL$ is concave, increasing on $(0,+\infty)$ and satisfies \eref{eq-condcL} for some constants $\lambda,\kappa>1$, 
\begin{equation}\label{eq-cL-cve-rates2}
\E\cro{\norm{p\et-\widehat p}}\le  \frac{192 \gamma^{1/5}}{n^{2/5}}+\frac{96\sqrt{14}}{\sqrt{n}}+\frac{2\varepsilon}{n},
\end{equation}
where the constant $\gamma$ is given by \eref{eq-cL-defR}. In particular, this result applies to the family of $s$-concave densities $\overline \cM(\cL_{s})$ with $s\in (-1,1)$. In the special case $s=0$, $\overline \cM(\cL_{s})$ is the family of log-concave densities and one may take $\gamma=26.5$. 
\end{itemize}
\end{cor}

\begin{proof}
For $a,b,c>0$, define on the set of positive integers $D$, 
\[
F(D)=\frac{a}{D^{2}}+b\sqrt{\frac{D-1}{n}}+c.
\]
Then, for the choice 
\[
D\et=\PES{\pa{\frac{a}{b}}^{2/5}n^{1/5}}\quad \text{which satisfies}\quad 1\vee \cro{\pa{\frac{a}{b}}^{2/5}n^{1/5}}\le D\et\le \cro{\pa{\frac{a}{b}}^{2/5}n^{1/5}}+1
\]
we obtain that 
\begin{equation}\label{eq-FD}
\inf_{D\ge 1}F(D)\le F(D\et)\le 2a^{1/5}b^{4/5}n^{-2/5}+c.
\end{equation}

When $\cL$ is convex, it follows from \eref{eq-Thm-cL-concave}, \eref{eq-astGuillaume1} and the subadditivity of the square root that for all $D\ge 1$, 
\begin{align*}
\E\cro{\norm{p\et-\widehat p}}&=2\E\cro{\dTV{P \et}{\widehat P}}\leq  \inf_{D\ge 1}\cro{3\inf_{s\in\overline \cO_{2D+1}(\cL)}\norm{p\et-s} + 96\sqrt{2}\sqrt{\frac{2D+3}{n}}}+\frac{2\varepsilon}{n}\\
&\le \inf_{D\ge 1}\cro{\frac{6}{D^{2}} + 96\sqrt{2}\sqrt{\frac{2(D-1)+5}{n}}}+\frac{2\varepsilon}{n}\le \inf_{D\ge 1}\cro{\frac{6}{D^{2}} + (96\times 2)\sqrt{\frac{D-1}{n}}}+ \frac{96\sqrt{10}}{\sqrt{n}}+\frac{2\varepsilon}{n}.
\end{align*}
We obtain \eref{eq-cL-cve-rates} by applying~\eref{eq-FD} with $a=6$, $b=96\times 2$ and $c=96\sqrt{10}/\sqrt{n}+2\eps/n$. 

When $\cL$ is concave and fulfils the requirements of Theorem~\ref{thm-approx-scv}, we may argue similarly 
by applying \eref{eq-bd-approx-scv} in place of \eref{eq-astGuillaume1} and by using the fact that for all $D\ge 1$, 
\begin{align*}
\E\cro{\norm{p\et-\widehat p}}
&\le \inf_{D\ge 1}\cro{\frac{6 \gamma}{D^{2}} + 96\sqrt{2}\sqrt{\frac{2(D-1)+7}{n}}}+\frac{2\varepsilon}{n}\le \inf_{D\ge 1}\cro{\frac{6\gamma}{D^{2}} + 96\times 2\sqrt{\frac{D-1}{n}}}+ \frac{96\sqrt{14}}{\sqrt{n}}+\frac{2\varepsilon}{n}.
\end{align*}
\end{proof}

\section{Concluding remarks}\label{sect-conclusion}

\subsection{About the computational aspects of the TV-estimators}
The estimation procedure that we propose here requires very few assumptions on the statistical model and the density to be estimated. From this point of view, it provides a competitor to $\rho$-estimators. As for the latter, it is unlikely that an algorithm for calculating TV-estimators can be designed at this level of generality. A more reasonable approach is to see how the estimator can be calculated in a specific situation that is, for a given statistical model, as we would do for the MLE. It is likely that the complexity of the algorithm will be related not only to the size of the model the statistician considers but also to the assumptions he or she wants to make on the target density. For illustration, let us consider the case of the Grenander estimator. In this situation, the statistician has a nice expression of the optimizer of the likelihood function because he or she has beforehand restricted the optimization to the set of densities that are supported on a given half-line (restriction to a specific model) and assumed that all the data do belong to this half-line (assumption on the target density). The MLE would not exist otherwise. Under the same assumptions, it is worth noticing that the calculation of the $\rho$-estimator becomes tractable as well (we merely recovers the MLE). The drawback of these somewhat restrictive assumptions, which make the calculability of the MLE possible and easy (as well as that of the $\rho$-estimator), lie in the fact that a very small error on the support of the density may result in a large distance between the true density and the model and, in turn, to a poor performance of these estimators. This is what we have shown in our Introduction. By considering the whole set of nonincreasing densities on a half-line, the TV- (and $\rho$-) estimator would not suffer from these weaknesses but may be more difficult to compute. This example shows that it seems difficult to disentangle the computational issues from the properties we wish the resulting estimator to achieve. We believe that these computational aspects, of both the TV- and $\rho$-estimators, are an area of research on their own and that solving these computational issues may require different sets of assumptions from those which are needed to establish their mathematical properties. 

\subsection{About the limitation of the approach for estimating a density under a shape constraint}
Our approach offers the advantage that it solves in a uniform framework various problems of estimation of a density under a shape constraint. It provides an estimator that is well-defined, even for sets of densities on which the MLE does not exist, and it leads to an estimator that enjoys some desirable adaptation properties. When  the data are i.i.d.\ with a distribution $P_{0}$ whose density has the expected shape as well as some additional features, the risk bound that we establish for our TV-estimator is of order $1/\sqrt{n}$. Interestingly, the estimator is robust to a departure from this ideal situation. When the data are only independent
but the average $P\et=n^{-1}\sum_{i=1}^{n}P_{i}\et$ of their marginals lies close enough to $P_{0}$, say in a ball (with respect to the total variation distance) centered at $P_{0}$ with radius $r>0$, the risk bound of the TV-estimator does not deteriorate by more than the additional term $3r$. This stability of the risk is the key ingredient that allows us to establish not only the robustness of our estimator but also, by means of approximation theory, some uniform $\L_{1}$-risk bounds over classes of densities of interest. However, this versatile approach suffers from some restrictions: it mainly applies to densities on the line. This is due to the fact that the level sets of unimodal, convex, concave or log-concave functions on the line are quite simple. They take the form of an interval or a union of two intervals. The situation dramatically changes in higher dimensions and the notion of extremal points, that we have introduced here, becomes much less interesting. In dimension 2, the class of all uniform densities on polygons, for example, does not possess any extremal points. It is unclear to us whether or not this limitation to the line is due to our approach or is rather inherent to the problem of robustness (or stability) which we want to solve.
\section{Proofs}\label{sect-7}

\subsection{Some useful lemmas for approximating densities and convex functions with the $\L_{1}$-norm}\label{subsect-7.1}

The following result shows that normalizing to one a nonnegative approximating function of a density cannot worsen its  $\L_{1}$-error by more than a factor 2.
\begin{lem} \label{lem_l1_proj}
Let $p$ be a density on a measured space $(E,\cE,\nu)$ and $f$ a nonnegative integrable function on $(E,\cE,\nu)$ which is not $\nu$-a.e.\ equal to 0 on $E$. 
Then
\[ 
\int_E \left|p- \frac{f}{\int_E f d\nu}\right| d\nu \leq 2 \cro{1\wedge \int_E |p - f| d\nu}. 
\]
\end{lem}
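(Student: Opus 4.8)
Write $c=\int_E f\,d\nu>0$ and $\delta=\int_E|p-f|\,d\nu$. The plan is to split the error $\int_E|p-f/c|\,d\nu$ by inserting $f$ as an intermediate term and applying the triangle inequality:
\[
\int_E\left|p-\frac{f}{c}\right|d\nu\le \int_E|p-f|\,d\nu+\int_E\left|f-\frac{f}{c}\right|d\nu=\delta+|1-c^{-1}|\int_E f\,d\nu=\delta+|c-1|.
\]
So the whole matter reduces to showing $|c-1|\le\delta$, after which we get the bound $2\delta$; combined with the trivial bound $\int_E|p-f/c|\,d\nu\le \int_E p\,d\nu+\int_E (f/c)\,d\nu=2$, this yields $2(1\wedge\delta)$ as claimed.

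To see $|c-1|\le\delta$: since $p$ is a density, $\int_E p\,d\nu=1$, so
\[
|c-1|=\left|\int_E f\,d\nu-\int_E p\,d\nu\right|=\left|\int_E (f-p)\,d\nu\right|\le\int_E|f-p|\,d\nu=\delta.
\]
That is the only non-cosmetic step, and it is immediate. The final step is just to take the minimum of the two bounds $2\delta$ and $2$, noting $2(1\wedge\delta)=2\min(1,\delta)$.

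There is essentially no obstacle here; the lemma is elementary. The one thing worth being slightly careful about is that $c>0$ (guaranteed by the hypothesis that $f$ is not $\nu$-a.e.\ zero), so that $f/c$ is well-defined, and that all the integrals in play are finite, which holds since $p$ and $f$ are integrable. I would write the proof in three short lines: the triangle-inequality split, the estimate $|c-1|\le\delta$, and the concluding $\min$ with the crude bound $2$.

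\begin{proof}[Proof of Lemma~\ref{lem_l1_proj}]
Set $c=\int_E f\,d\nu$, which is positive since $f$ is nonnegative, integrable and not $\nu$-a.e.\ equal to $0$, and set $\delta=\int_E|p-f|\,d\nu$. Since $p$ is a density, $\int_E p\,d\nu=1$, so
\[
|c-1|=\left|\int_E (f-p)\,d\nu\right|\le \int_E|f-p|\,d\nu=\delta.
\]
By the triangle inequality,
\[
\int_E\left|p-\frac{f}{c}\right|d\nu\le \int_E|p-f|\,d\nu+\int_E f\left|1-\frac{1}{c}\right|d\nu=\delta+|c-1|\le 2\delta.
\]
On the other hand, since $p$ and $f/c$ are both nonnegative with integral $1$,
\[
\int_E\left|p-\frac{f}{c}\right|d\nu\le \int_E p\,d\nu+\int_E\frac{f}{c}\,d\nu=2.
\]
Combining the last two displays gives $\int_E|p-f/c|\,d\nu\le 2\pa{1\wedge\delta}$, which is the claim.
\end{proof}
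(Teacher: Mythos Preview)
Your proof is correct and, in fact, more streamlined than the paper's. The paper proceeds by a case split on whether $c=\int_E f\,d\nu$ lies in $(0,1]$ or in $(1,\infty)$: in the first case it uses the density identity $\int_E|f/c-p|\,d\nu=2\int_E(p-f/c)\1_{\{cp\ge f\}}\,d\nu$ and the inequality $f/c\ge f$ to bound the integrand pointwise by $|p-f|$; in the second case it swaps the roles of $p$ and $f$ (applying the first case to the density $f/c$ and the function $p/c$) and picks up a harmless factor $1/c<1$. Your approach sidesteps both the density identity and the case analysis entirely, relying only on the triangle inequality with $f$ as intermediate point and the one-line observation $|c-1|=\bigl|\int_E(f-p)\,d\nu\bigr|\le\delta$. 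The paper's route has the minor conceptual advantage of explaining \emph{why} the constant $2$ is sharp (it comes from the factor $2$ in the positive-part formula for the $\L_1$-distance between densities), but your argument is shorter and requires nothing beyond the triangle inequality.
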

This inequality cannot be improved in general since equality holds when $f=p\1_{I}$ and $I$ is a measurable subset of $E$ on which $p$ is not $\nu$-a.e.\ equal to 0. 

\begin{proof}
The fact that 
\[
\int_E \left|p- \frac{f}{\int_E f d\nu}\right|d\nu\le 2
\]
comes from the triangle inequality. Let us now prove that 
\[
\int_E \left|p- \frac{f}{\int_E f d\nu}\right| d\nu \leq 2 \int_E |p - f| d\nu.
\]
We first assume that $c = \int_{E} f d\nu\in (0,1]$. Since $f/c$ and $p$ are two densities, 
\begin{align*}
\int_E \left|\frac{f}{c} - p \right| d\nu &= 2 \int_I \left[ p - \frac{f}{c} \right] \1_{\{c p \geq f\}} d\nu \leq 2 \int_E [p - f] \1_{\{c p \geq f\}} d\nu\leq 2 \int_E |p - f| d\nu. 
\end{align*}
This proves the lemma when $c \in (0,1]$. let us now assume that $c > 1$.
The previous case of the lemma applies to the density $f/c$ and the nonnegative function $p/c$ the integral of which is not larger than 1. This  yields
\begin{align*}
\int_E \left| \frac{f}{c} - p \right|d\nu 
&\leq 2 \int_E \left|\frac{f}{c} - \frac{p}{c} \right| d\nu=\frac{2}{c}\int_I |f - p| d\nu \leq 2 \int_E|f - p| d\nu.
\end{align*}
\end{proof}

The following lemma applies to approximation functions that are unimodal.   
\begin{lem} \label{lem-scale_dens}
Let $f$ be an integrable function which is nondecreasing on $(-\infty,c)$ and nonincreasing on $(c,+\infty)$ for some $c \in \mathbb{R}$, and such that $\kappa=\int_{-\infty}^{+\infty} f(x) dx \in (0,1]$.
Then, for any probability density $p$ on the line,
\[ \int_{-\infty}^{+\infty} \left| p(x) - f\left(c + \kappa(x-c) \right) \right| dx \le 2 \int_{-\infty}^{+\infty} |p(x) - f(x)| dx. \] 
\end{lem}
The constant 2 is optimal as shown by the following example. Given $\eps\in (0,1)$, let $f=\eps \1_{[-1/2,1/2]}$ and  $p=\1_{[1/\eps,1+1/\eps]}$ so that $\kappa=\eps$ and one may take $c=0$. The function $x\mapsto f(\kappa x)$ is then the uniform density on $[-1/(2\eps),1/(2\eps)]$ and its support, as that of $f$, is disjoint from
that of $p$. We conclude that
\[
\int_{-\infty}^{+\infty} \left| p(x) - f\left(\kappa x\right) \right| dx=2\quad \text{and}\quad  \int_{-\infty}^{+\infty} |p(x) - f(x)| dx=1+\eps.
\]
\begin{proof}
Since the Lebesgue measure is translation invariant, we may assume without loss of generality that $c = 0$. Let $x \in \mathbb{R}$. If $x < 0$, $\kappa x> x$ and $f(\kappa x)\ge f(x)$ since $f$ is nondecreasing on $(-\infty, 0)$ and if $x>0$, $\kappa x< x$ and $f(\kappa x)\ge f(x)$ since $f$ is nonincreasing on $(0,+\infty)$. We deduce that for all $x\in\R$, $f(\kappa x)\ge f(x)$ and consequently, 
\begin{align*}
\int_{- \infty}^{+\infty} \left| f \left( \kappa x \right) - f(x)  \right| dx &=
\int_{- \infty}^{+\infty} f \left( \kappa x \right) dx -  \int_{-\infty}^{+\infty} f(x)  dx = \frac{1}{\kappa}\int_{- \infty}^{+\infty} f \left( x \right) dx - \int_{-\infty}^{+\infty} f(x)  dx \\
&= 1- \int_{-\infty}^{+\infty} f(x)  dx=\int_{-\infty}^{+\infty} p(x) dx - \int_{-\infty}^{+\infty} f(x)  dx \leq \int_{-\infty}^{+\infty} |p(x) - f(x)| dx.
\end{align*}
Using the triangle inequality, we conclude that 
\begin{align*}
\int_{- \infty}^{+\infty} \left| f \left( \kappa x \right) - p(x)  \right| dx &\le \int_{- \infty}^{+\infty} \left| f \left( \kappa x  \right) - f(x)  \right| dx + \int_{-\infty}^{+\infty} |p(x) - f(x)| dx \le 2 \int_{-\infty}^{+\infty} |p(x) - f(x)| dx.
\end{align*}
\end{proof}

The following result shows how well a convex (or concave) function can be approximated by its chord, that is, its $1$-linear interpolation on a compact interval. 
\begin{prop}\label{prop-approx-convexe}
Let $f$ be a convex-concave continuous function on a bounded nontrivial interval $[a,b]$ and $\ell_{f}$ be the linear function
\begin{equation}\label{def-lf}
\ell_{f}:x\mapsto f(a)+\frac{f(b)-f(a)}{b-a}\pa{x-a}.
\end{equation}
The following results hold.
\begin{itemize}
\item[(i)]\label{cas-conv1} If $f$ admits a right derivative $f'_{r}(a)$ at $a$ and a left derivative $f'_{l}(b)$ at $b$,
\begin{equation}\label{eq-approx-convexe}
\int_{a}^{b}\ab{f-\ell_{f}}d\mu\le \frac{(b-a)^{2}}{8}\ab{f'_{r}(a)-f'_{l}(b)}.
\end{equation}
\item[(ii)]\label{cas-conv2} If $f$ is strictly monotone on $[a,b]$ with $f'_{r}(a)\ne 0$ and $f'_{l}(b)\ne 0$
\begin{equation}\label{eq-approx-convexe-01}
\int_{a}^{b}\ab{f-\ell_{f}}d\mu\le \frac{(f(b)-f(a))^{2}}{8}\ab{\frac{1}{f'_{r}(a)}-\frac{1}{f'_{l}(b)}},
\end{equation}
with the convention $1/(\pm \infty)=0$.
\end{itemize}
\end{prop}
\begin{proof}
Let us first assume that $f$ is concave on $[0,1]$,  admits a right derivative at 0, a left derivative at 1 and satisfies $f(0)=0$ and $f(1)$=1. Let us show that 
\begin{equation}\label{eq-approx-fc00}
\int_{0}^{1}\ab{f-\ell_{f}}d\mu\le \frac{1}{2}\frac{\pa{f_{r}'(0)-1}\pa{1-f'_{l}(1)}}{f'_{r}(0)-f'_{l}(1)}
,
\end{equation}
with the convention $0/0=1$. If $f$ is linear on $[0,1]$, then $f_{r}'(0)=f'_{l}(1)=1$ and the inequality is satisfied with our convention. Otherwise, $f_{l}'(1)< 1=f(1)-f(0)<f_{r}'(0)$ and since $f$ lies above its chord and under its tangents at 0 and 1, 
\[
\ell_{f}(x)\le f(x)\le\min\ac{f'_{r}(0)x,1+f_{l}'(1)(x-1)} \quad \text{for all $x\in [0,1]$}.
\]
Since $\ell_{f}(x)=x$, we deduce that for all $c\in [0,1]$
\begin{align*}
\int_{0}^{1}\ab{f-\ell_{f}}d\mu&\le \int_{0}^{c}(f'_{r}(0)-1)xd\mu+\int_{c}^{1}(1-f_{l}'(1))(1-x)d\mu\\
&=\frac{1}{2}\cro{(f'_{r}(0)-1)c^{2}+(1-f_{l}'(1))(1-c)^{2}}.
\end{align*}
The result follows by minimizing with respect to $c$, i.e.\ by taking $c=(1-f_{l}'(1))/(f_{r}'(0)-f_{l}'(1))\in (0,1)$. 
In particular, we deduce from \eref{eq-approx-fc01} that 
\begin{align*}
\int_{0}^{1}\ab{f-\ell_{f}}d\mu\le \frac{c(1-c)}{2}\pa{f'_{r}(0)-f'_{l}(1)}
\end{align*}
and since $c(1-c)\le 1/4$, we obtain that 
\begin{equation}\label{eq-approx-fc01}
\int_{0}^{1}\ab{f-\ell_{f}}d\mu\le \frac{1}{8}\pa{f'_{r}(0)-f'_{l}(1)}.
\end{equation}
Note that the inequality also holds when $f'_{r}(0)=f'_{l}(1)=1$. 

When $f$ is increasing on $[0,1]$ and satisfies $0<1-f_{l}'(1)<1$, i.e.\ $f_{l}'(1)\ne 0$, we also deduce from \eref{eq-approx-fc01} and the convexity of $z\mapsto 1/z$ on $(0,+\infty)$ that 
\begin{align*}
\int_{0}^{1}\ab{f-\ell_{f}}d\mu&\le \frac{1}{4}\frac{1}{\frac{1}{2}\cro{\pa{\frac{1}{1-f'_{l}(1)}-1}+\pa{1+\frac{1}{f'_{r}(0)-1}}}}\le \frac{1}{4}\cro{\frac{1}{2}\pa{\frac{1}{\frac{1}{1-f'_{l}(1)}-1}+\frac{1}{1+\frac{1}{f'_{r}(0)-1}}}}\\
&= \frac{1}{8}\cro{\frac{1-f'_{l}(1)}{f'_{l}(1)}+\frac{f'_{r}(0)-1}{f'_{r}(0)}}
\end{align*}
which leads to 
\begin{equation}\label{eq-approx-fc02}
\int_{0}^{1}\ab{f-\ell_{f}}d\mu\le \frac{1}{8}\pa{\frac{1}{f'_{l}(1)}-\frac{1}{f'_{r}(0)}}.
\end{equation}
Note that the inequality is still satisfied when $f'_{r}(0)=+\infty$ with the convention $1/(+\infty)=0$. 

Let us now turn to the proofs of~\eref{eq-approx-convexe} and~\eref{eq-approx-convexe-01}. Note that ~\eref{eq-approx-convexe} is clearly true when $f$ is constant on $[a,b]$ and we may therefore assume that $f(a)\ne f(b)$. We obtain \eref{eq-approx-convexe} and \eref{eq-approx-convexe-01} by applying \eref{eq-approx-fc01} and \eref{eq-approx-fc02} respectively  to the function 
\[
g(x)=\frac{f(a+x(b-a))-f(a)}{f(b)-f(a)}
\]
when $f$ is concave and satisfies $f(b)>f(a)$ or when $f$ is convex and satisfies $f(a)>f(b)$. In the other cases, one may use the function 
\[
g(x)=\frac{f(b-x(b-a))-f(b)}{f(a)-f(b)}.
\]
\end{proof}

The following result is an extension of  Proposition~\ref{prop-approx-convexe}. It shows how well a convex (or concave) function can be approximated by a suitable $D$-linear interpolation on a compact interval. 
\begin{prop}\label{prop-guer1}
Let $D$ be some positive integer and $f$ a convex-concave function on a non-trivial interval $[a,b]$ such that 
\[
\Delta=\frac{f(b)-f(a)}{b-a}\ne 0.
\]
\begin{itemize}
\item[(i)] If $f$ admits a right derivative at $a$ and a left derivative at $b$, there exists a $D$-linear interpolation $\ell_{1}$ of $f$ such that 
\begin{align}
\int_{a}^{b}\ab{f-\ell_{1}}d\mu&\le \frac{(b-a)^{2}}{2D^{2}}\frac{\ab{f_{r}'(a)-\Delta}\ab{\Delta-f'_{l}(b)}}{\ab{f'_{r}(a)-f'_{l}(b)}}\le \frac{(b-a)^{2}}{8D^{2}}\ab{f'_{r}(a)-f'_{l}(b)},\label{prop-guer1a}
\end{align}
with the convention $0/0=0$ in the right-hand side of \eref{prop-guer1a}.
\item[(ii)] If $f$ is strictly monotone on $[a,b]$ with $f_{r}'(a)\ne 0$ and $f_{r}'(b)\ne 0$, there exists a $D$-linear interpolation $\ell_{2}$ of $f$
\begin{equation}
\int_{a}^{b}\ab{f-\ell_{2}}d\mu\le \frac{\pa{f(a)-f(b)}^{2}}{8D^{2}}\ab{\frac{1}{f'_{r}(a)}-\frac{1}{f'_{l}(b)}}\label{prop-guer1b}
\end{equation}
with the convention $1/(\pm \infty)=0$.

\item[(iii)] If $f$ is monotone and convex or concave on $[a,b]$, there exists a $D$-linear interpolation $\ell_{3}$ of $f$ such that 
\begin{equation}
\int_{a}^{b}\ab{f-\ell_{3}}d\mu\le \frac{(b-a)\ab{f(a)-f(b)}}{2D^{2}}.\label{prop-guer1c}
\end{equation}
\end{itemize}
\end{prop}

\begin{proof}
If $f$ is a concave and monotone function on $[0,1]$ satisfying $f(0)=0$ and $f(1)=1$, Gu\'erin {\em et al}~\cite{Guerin2006} proved that there exists a $D$-linear interpolation $\ell$ of $f$ on $[0,1]$ such that 
\[
\int_{0}^{1}\ab{f-\ell}d\mu\le \frac{1}{2D^{2}}.
\]
If $f$ admits a right derivative at 0 and a left derivative at 1, they proved that there exists a $D$-linear interpolation $\ell$ of $f$ on $[0,1]$ such that 
\begin{equation}\label{eq-guer1}
\int_{0}^{1}\ab{f-\ell}d\mu\le \frac{1}{2D^{2}}\frac{\pa{f'_{r}(0)-1}\pa{1-f_{l}'(1)}}{f'_{r}(0)-f'_{l}(1)}.
\end{equation}
The results established in Proposition~\ref{prop-guer1} are deduced from~\eref{eq-guer1} by arguing as in the proof of Proposition~\ref{prop-approx-convexe}.
\end{proof}

\subsection{Proofs of Section \ref{sec-ell-estimator}}\label{subsect-proof-ell-estimator}

\begin{proof}[Proof of Theorem \ref{shape-estimation-th}]
Let $\frD\ge 1$ such that $\overline \cO(\frD)$ is not empty. Such an integer $\frD$ exists since $\overline \cO$ is nonempty. Let $\overline P=\overline p\cdot \mu$ be an arbitrary point in $\sO(\frD)$ with $\overline p\in\cO(\frD)$.  For $P,Q\in\sM$ and $\zeta\ge 0$, we set 
\begin{align*}
\gZ_+(\bsX,P)&=\sup_{Q\in\sM}\cro{\gT(\bsX,P,Q)-\E\cro{\gT(\bsX,P, Q)}}-\zeta\\
\gZ_{-}(\bsX,P)&=\sup_{Q\in\sM}\cro{\E\cro{\gT(\bsX,Q,P)}-\gT(\bsX,Q,P)}-\zeta
\end{align*}
and 
\[
\gZ(\bsX,P)=\gZ_+(\bsX,P)\vee\gZ_{-}(\bsX,P).
\]
Applying the first inequality of \eqref{bound_test_TV_eq} with $P=Q$ and $Q=\overline{P}$, we infer that for all $Q\in\sM$, 
\begin{align*}
n\dTV{\overline{P}}{Q} 
&\leq n\dTV{P \et}{\overline{P}}+\E\cro{\gT(\bsX,Q,\overline P)}\\
&=n\dTV{P \et}{\overline{P}}+\E\cro{\gT(\bsX,Q,\overline P)}-\gT(\bsX,Q,\overline P)+\gT(\bsX,Q,\overline P)\\
&\leq n\dTV{P \et}{\overline{P}}+\gZ(\bsX,\overline P)+\gT(\bsX,Q,\overline P)+\zeta\\
&\leq n\dTV{P \et}{\overline{P}}+\gZ(\bsX,\overline P)+\gT(\bsX,Q)+\zeta.
\end{align*}
In particular, the inequality applies to $Q=\widehat P\in \sE(\bsX)$ and using the fact that 
\[
\gT(\bsX,\widehat P)\leq \inf_{P \in\sM}\gT(\bsX,P)+\varepsilon \leq \gT(\bsX,\overline P)+\varepsilon,
\]
we deduce that 
\begin{align}
n\dTV{\overline{P}}{\widehat P}&\leq n\dTV{P \et}{\overline{P}}+\gZ(\bsX,\overline P)+\gT(\bsX,\overline P)+\zeta+\varepsilon.\label{eq-fond00}
\end{align}
Using now the second inequality of \eqref{bound_test_TV_eq} with $P=\overline P$, we  obtain that
\begin{align*}
\gT(\bsX,\overline P)&=\sup_{Q\in\sM}\gT(\bsX,\overline P,Q)\\
&\leq \sup_{Q\in\sM}\cro{\gT(\bsX,\overline P,Q)-\E\cro{\gT(\bsX,\overline P,Q)}-\zeta}+\sup_{Q\in\sM}\E\cro{\gT(\bsX,\overline P,Q)}+\zeta\\
&\leq \gZ(\bsX,\overline P)+n\dTV{P \et}{\overline{P}}+\zeta,
\end{align*}
which with~\eref{eq-fond00} lead to 
\begin{equation}\label{eq-fond01}
n\dTV{\overline{P}}{\widehat P}\leq 2n\dTV{P \et}{\overline{P}} + 2\zeta+\varepsilon+2\gZ(\bsX,\overline P).
\end{equation}
Let us now bound from above $\gZ(\bsX,\overline P)$. We set for $P\in\sM$
\begin{align*}
\gw(P)&=\E\cro{\sup_{Q\in\sM}\cro{\gT(\bsX,P,Q)-\E\cro{\gT(\bsX,P, Q)}}}
\vee \E\cro{\sup_{Q\in\sM}\cro{\E\cro{\gT(\bsX,Q, P)}-\gT(\bsX,Q,P)}}.
\end{align*}
The functions $t_{(\overline P,Q)}$ satisfy $|t_{(\overline P,Q)}(x)-t_{(\overline P,Q)}(x')|\le 1$ for all $Q\in\sM$ and $x,x'\in E$, hence 
\[
\ab{\gZ_{+}((x_{1},\ldots,x_{i},\ldots,x_{n}),\overline P)-\gZ_{+}((x_{1},\ldots,x_{i}',\ldots,x_{n}),\overline P)}\leq 1
\]
for all $\gx\in\gE$, $x_{i}'\in E$ and $i\in\{1,\ldots,n\}$. Arguing as in the proof of Lemma 2 of Baraud~\cite{BY-TEST} (with $\xi +\log 2$ in place of $\xi$), we deduce that with a probability at least $1-(1/2)e^{-\xi}$, 
\begin{align}
\gZ_{+}(\bsX,\overline P)&\le \E\cro{\gZ_{+}(\bsX,\overline P)}+\sqrt{\frac{n(\xi +\log 2)}{2}}\\
&=\E\cro{\sup_{Q\in\sM}\cro{\gT(\bsX,\overline P,Q)-\E\cro{\gT(\bsX,\overline P, Q)}}}+\sqrt{\frac{n(\xi +\log 2)}{2}}-\zeta\nonumber\\
&\le \gw(\overline P)+\sqrt{\frac{n(\xi +\log 2)}{2}}-\zeta.\label{eq-thm1-007}
\end{align}
Arguing similarly, with a probability at least $1-(1/2)e^{-\xi}$, 
\begin{align}
\gZ_{-}(\bsX,\overline P)&\le \gw(\overline P)+\sqrt{\frac{n(\xi +\log 2)}{2}}-\zeta.\label{eq-thm1-007b}
\end{align}
Putting \eref{eq-thm1-007} and \eref{eq-thm1-007b} together and choosing $\zeta= \gw(\overline P)+\sqrt{n(\xi +\log 2)/2}$, we obtain that with a probability at least $1-e^{-\xi}$,
\[
\gZ(\bsX,\overline P)=\gZ_+(\bsX,P)\vee\gZ_{-}(\bsX,P)\le \gw(\overline P)+\sqrt{\frac{n(\xi +\log 2)}{2}}-\zeta\le 0
\]
which with \eref{eq-fond01} lead to the bound
%
\begin{equation}\label{eq-thm1-final}
\dTV{\overline{P}}{\widehat P}\leq 2\dTV{P \et}{\overline{P}} + \frac{2\gw(\overline P)}{n}+\sqrt{\frac{2(\xi +\log 2)}{n}}+\frac{\varepsilon}{n}.
\end{equation}
It remains now to control $\gw(\overline P)$. Since $\overline p\in\cO(\frD)\subset \overline \cO(\frD)$, it is extremal in $\overline \cM\supset \cM$ with degree not larger than $\frD$, the classes $\ac{\{q<\overline p\},\; q\in\cM\setminus\{\overline p\}}$ and $\ac{\{q>\overline p\},\; q\in\cM\setminus\{\overline p\}}$ are both VC with dimensions not larger than $\frD$. We may therefore apply Proposition 3.1 in Baraud~\cite{Bar2016} with $\sigma=1$ and get 
\begin{align}
\E\cro{\sup_{q\in\cM\setminus\{\overline p\}}\ab{ \sum_{i=1}^{n}\pa{\1_{\overline p>q}(X_{i})-P_i\et(\overline p>q)}}}
&\leq 10\sqrt{5n\frD},\label{eq-Besup00}
\end{align}
and 
\begin{align}
\E\cro{\sup_{q\in\cM\setminus\{\overline p\}}\ab{ \sum_{i=1}^{n}\pa{\1_{\overline p<q}(X_{i})-P_i\et(\overline p<q)}}}
&\leq 10\sqrt{5n\frD}.\label{eq-Besup01}
\end{align}
These inequalities entail $\gw(\overline P)\leq 10\sqrt{5n\frD}$, and we infer from~\eref{eq-thm1-final} that 
\begin{equation} \label{eq_ubd_dist_pbar_phat}
  \dTV{\overline{P}}{\widehat P} \leq 2\dTV{P \et}{\overline{P}} + 20\sqrt{\frac{5\frD}{n}}+\sqrt{\frac{2(\xi +\log 2)}{n}}+\frac{\varepsilon}{n}.  
\end{equation}
Since $\overline P$ is arbitrary in the set $\sO(\frD)$ which is dense on $\overline \sO(\frD)$, we infer that equation \eqref{eq_ubd_dist_pbar_phat} holds for all $\overline P \in \overline \sO(\frD)$, which yields \eqref{inthm_ubd_dist_p_phat}.
Hence, by the triangle inequality,
\begin{align*}
  \dTV{P \et}{\widehat P}  &\leq \inf_{P\in\overline \sO(\frD)} \left\{ \dTV{P \et}{P} +  \dTV{P}{\widehat P} \right\} \\
  &\leq 3\inf_{P\in\overline \sO(\frD)}\dTV{P \et}{P} + 20\sqrt{\frac{5\frD}{n}}+\sqrt{\frac{2(\xi +\log 2)}{n}}+\frac{\varepsilon}{n}.
\end{align*}
With our convention that $\inf_{\varnothing}=+\infty$, the inequality is also true when  $\overline \sO(\frD)=\varnothing$, hence for all values of $\frD$, which  leads to \eref{Thm1_risk_bound_deviation_eq}. Inequality \eref{Thm1_risk_bound_expecation_eq} follows by integrating this deviation bound with respect to $\xi$. 
\end{proof}

\subsection{Proofs of Section \ref{sect-kmono}}\label{subsect-proof-kmono}

\begin{proof}[Proof of Proposition \ref{prop-birge}]
We restrict ourselves to the case where $p$ is nonincreasing on $I$, the proof in the other case is similar. Let $q$ be the function that coincides with $p$ on $\mathring I=(a,b)$ and satisfies $q(a)=\sup_{x\in (a,b)}p(x)$ and $q(b)=\inf_{x\in(a,b)}p(x)$. Clearly, $p=q$ a.e.\ and satisfies $V_{I}(p)=q(a)-q(b)=V_{I}(q)$. Without loss of generality, we may therefore assume that $I=[a,b]$ and that $V_{I}(p)=p(a)-p(b)$. 

Since $p$ is nonincreasing in $I$, for all intervals $J\subset I$ with endpoints $u<v$, 
\begin{equation}\label{eq-lem-lulu}
\int_{J}\ab{p-\overline p_{J}}d\mu\le \frac{(v-u)(p(u)-p(v))}{2}.
\end{equation}
In particular, when $D=1$ it suffices to take $\cJ=\{I\}$ and the result follows from \eref{eq-lem-lulu} with $u=a$ and $v=b$ and the trivial inequality $\int_{I}|p-\overline p_{J}|d\mu\le 2$. It remains to prove the result for  $D\ge 2$ and since \eref{eq-lem-birge} is trivially true for $V=0$ we may also assume that $V>0$. 

Let us set $\eta=(1+VL)^{1/D}-1>0$, $x_{0}=a$ and for all $j\in\{1,\ldots,D\}$, 
\[
x_{j}=x_{j-1}+L\frac{(1+\eta)^{j}}{\sum_{k=1}^{D}(1+\eta)^{k}}=x_{0}+L\frac{(1+\eta)^{j}-1}{(1+\eta)^{D}-1}=x_{0}+\frac{(1+\eta)^{j}-1}{V}.
\]
Then, we obtain an increasing sequence of points $a=x_{0}<x_{1}<\ldots<x_{D}=b=a+L$ and a partition $\cJ$ of $I$ into $D$ intervals based on $\{x_{0},\ldots,x_{D}\}$. Using \eref{eq-lem-lulu} and the facts that $x_{j+1}-x_{j}=(1+\eta)(x_{j}-x_{j-1})>x_{j}-x_{j-1}$ for $j\in\{1,\ldots,D-1\}$, we obtain 
\begin{align*}
\int_{I}\ab{p-\overline p}d\mu&=\sum_{J\in\cJ}\int_{J}\ab{p-\overline p_{J}}d\mu\le \frac{1}{2}\sum_{j=1}^{D}(x_{j}-x_{j-1})[p(x_{j-1})-p(x_{j})]\\
&=\frac{1}{2}\cro{p(x_{0})(x_{1}-x_{0})+\sum_{j=1}^{D-1}p(x_{j})\cro{(x_{j+1}-x_{j})-(x_{j}-x_{j-1})}}
-\frac{p(x_{D})(x_{D}-x_{D-1})}{2}\\
&\le \frac{1}{2}\cro{V_{I}(p)(x_{1}-x_{0})+\eta\sum_{j=1}^{D-1}p(x_{j})(x_{j}-x_{j-1})}
+\frac{p(x_{D})\cro{(x_{1}-x_{0})-(x_{D}-x_{D-1})}}{2}\\
&\le  \frac{1}{2}\cro{V(x_{1}-x_{0})+\eta\sum_{j=1}^{D-1}\int_{x_{j-1}}^{x_{j}}pd\mu}\le \frac{1}{2}\cro{V(x_{1}-x_{0})+\eta}=\eta.
\end{align*}
Together with the trivial bound $\int_{I}\ab{p-\overline p}d\mu\le 2$, this last inequality leads to \eref{eq-lem-birge}. The second inequality derives from the fact that $(e^{x}-1)\wedge 2\le 2x/\log 3\le 2x$ for all $x\ge 0$.
\end{proof}

\begin{proof}[Proof of Theorem~\ref{thm-3}]
Let $D,D_{1},\ldots,D_{k-2}$ be positive integers and $p$ a density in $\overline \cM_{k}^{\infty}(R)$. We may therefore write $p$ in the form~\eref{def-mbark} with 
\[
\cro{\sum_{i=1}^{k-2}\sqrt{w_{i}\log\pa{1+L_{i}V_{i}}}}^{2}\le R.
\]
Applying Proposition~\ref{prop-birge} to the density $p_{i}$, with $I=I_{i}=(x_{i-1},x_{i})$, $L=L_{i}=(x_{i}-x_{i-1})$ and $V=V_{i}$ for each $i\in\{1,\ldots,k-2\}$, we build a monotone density $\overline p_{i}$ on $I_{i}$ which is piecewise constant on a partition of $I_{i}$ into $D_{i}\ge 1$ nontrivial intervals and that satisfies 
\[
\int_{I_{i}}\ab{p_{i}-\overline p_{i}}d\mu\le \frac{2S_{i}}{D_{i}}\quad \text{with}\quad S_{i}=\log\pa{1+V_{i}L_{i}}.
\]
Let us now take $D_{i}=\left\lceil D\sqrt{w_{i}S_{i}}/(\sum_{i=1}^{k-2}\sqrt{w_{i}S_{i}})\right\rceil\vee 1$ for all $i\in\{1,\ldots,k-2\}$. Since 
\[
\frac{D\sqrt{w_{i}S_{i}}}{\sum_{i=1}^{k-2}\sqrt{w_{i}S_{i}}}\vee 1\le D_{i}\le \frac{D\sqrt{w_{i}S_{i}}}{\sum_{i=1}^{k-2}\sqrt{w_{i}S_{i}}}+1,
\]
the density $\overline p=\sum_{i=1}^{k-2}w_{i}\overline p_{i}$ satisfies 
\begin{align*}
\norm{p-\overline p}&\le \sum_{i=1}^{k-2}w_{i}\int_{I_{i}}\ab{p_{i}-\overline p_{i}}d\mu\le  \sum_{i=1}^{k-2}\frac{2w_{i}S_{i}}{D_{i}}\le \frac{2}{D}\pa{\sum_{i=1}^{k-2}\sqrt{w_{i}S_{i}}}^{2}\le \frac{2R}{D}.
\end{align*}
Besides, the density $\overline p$ is $k$-piecewise monotone, supported on $[x_{1},x_{k-2}]$ and piecewise constant on a partition of $\R$ consisting of at most $\sum_{i=1}^{k-2}D_{i}\le D+k-2$ bounded intervals. It therefore belongs to $\overline \cO_{D+k-2,k}$. Finally, let us choose 
\[
D=\left\lceil\pa{\frac{9R^{2}n}{83.2^2}}^{1/3}\right\rceil\le \pa{\frac{9R^{2}n}{83.2^2}}^{1/3}+1. 
\]
Using the sub-additivity property of the square root, we deduce from~\eref{eq-defB} that 
\begin{align*}
\B_{k,n}(p)&\le \frac{3R}{2D}+83.2\sqrt{\frac{D-1+2k}{n}}\\
&\le \frac{3^{1/3}\times 83.2^{2/3}}{2}\pa{\frac{R}{n}}^{1/3}+\frac{83.2}{\sqrt{n}}\sqrt{\pa{\frac{9nR^{2}}{83.2^{2}}}^{1/3}+2k}\\
&\le \cro{\frac{3^{1/3}\times 83.2^{2/3}}{2}+ 3^{1/3}\times83.2^{2/3}}\pa{\frac{R}{n}}^{1/3}+83.2\sqrt{\frac{2k}{n}},
\end{align*}
which is~\eref{eq-thm-3}.  
\end{proof}

\begin{proof}[Proof of Theorem \ref{thm-monogene}]
Let us start with the following lemma where we show that the mapping $\tau(p,\cdot)$ controls the $\L_{1}$-approximation error of a monotone density $p$ by the elements of the class $\overline \cM_{3}^{\infty}(R)$. 
\begin{lem}\label{lem-tronc}
Let $p$ be a density on $\R$, $B$ some positive number and $I$ a subset of $\R$ on which the density $p$ is not a.e.\ equal to 0. The density $p_{|I}^{\wedge B}=(p\wedge B)\1_{I}/\int_{I}(p\wedge B)d\mu$ satisfies, 

\begin{equation}\label{lem-tronc-eq1}
\norm{p-p_{| I}^{\wedge B}}\le 2 \cro{\int_{I}\pa{p-B}_{+}d\mu+\int_{I^{c}}pd\mu}.
\end{equation}
If $p$ is a monotone density on a half-line
%
\begin{equation}\label{lem-tronc-eq2}
\inf_{\overline p\in\overline \cM_{3}^{\infty}(R)}\norm{p-\overline p}\le 2\tau\pa{p,\exp(R)-1}\quad \text{for all $R\ge \log 2$.}
\end{equation}
Besides, if $p$ is a nonincreasing density on $(a,a+l)$ (respectively a nondecreasing density on $(a-l,a)$) with $a\in\R$ and $l\in(0,+\infty]$, we may restrict the infimum to the nonincreasing densities on $(a,a+l)$ (respectively the nondecreasing densities on $(a-l,a)$) that belong to $\overline \cM_{3}^{\infty}(R)$.
\end{lem}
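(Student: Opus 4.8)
I would first dispose of \eqref{lem-tronc-eq1}, which is an immediate application of Lemma~\ref{lem_l1_proj} with $f=(p\wedge B)\1_{I}$. This $f$ is nonnegative, integrable and not $\mu$-a.e.\ zero (on the positive-measure set $\{p>0\}\cap I$ one has $p\wedge B>0$ because $B>0$), so the lemma gives $\norm{p-p_{|I}^{\wedge B}}\le 2\int_{\R}\ab{p-f}\,d\mu$, and it only remains to observe that $\int_{\R}\ab{p-f}\,d\mu=\int_{I}(p-p\wedge B)\,d\mu+\int_{I^{c}}p\,d\mu=\int_{I}(p-B)_{+}\,d\mu+\int_{I^{c}}p\,d\mu$.

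For \eqref{lem-tronc-eq2} and the final assertion, the plan is to produce, for each $s>0$, one explicit element of $\overline\cM_{3}^{\infty}(R)$ by capping $p$ at the height $p(a+s)$ and cutting its support down to an interval of length $st$, and then to optimise over $s$. Using the reflection $x\mapsto p(-x)$ built into Definition~\ref{def-conjugate}, I may assume $p$ is a nonincreasing density on $(a,a+l)$ with $a\in\R$, $l\in(0,+\infty]$ (the half-line case being $l=+\infty$), so that $p$ vanishes off $(a,a+l)$ and $p(a+s)\le s^{-1}\int_{a}^{a+s}p\,d\mu\le s^{-1}<+\infty$ for every $s>0$. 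Put $t=\exp(R)-1\ge 1$. If $\tau(p,t)\ge 1$ there is nothing to prove, since $\norm{p-\overline p}\le 2$ for every density and $\overline\cM_{3}^{\infty}(R)$ contains a nonincreasing density on $(a,a+l)$ (the uniform density when $l<+\infty$, or $\1_{(a,a+1)}$ when $l=+\infty$); so assume $\tau(p,t)<1$. Since Lemma~\ref{lem-ty} with $B=0$ gives $\tau_{y}(p,0)=1$, the infimum defining $\tau(p,t)$ in~\eqref{eq-tail} may then be taken over those $s>0$ with $p(a+s)>0$ only. For such an $s$ I put $B=p(a+s)>0$, $I_{s}=(a,a+st)\cap(a,a+l)$ and $\overline p_{s}=p_{|I_{s}}^{\wedge B}$; applying \eqref{lem-tronc-eq1}, together with $\int_{I_{s}}(p-B)_{+}\,d\mu\le\int_{0}^{+\infty}(p(a+x)-p(a+s))_{+}\,d\mu(x)=\tau_{y}(p,p(a+s))$ (Lemma~\ref{lem-ty}) and $\int_{I_{s}^{c}}p\,d\mu\le\int_{st}^{+\infty}p(a+x)\,d\mu(x)=\tau_{x}(p,st)$, yields
\[
\norm{p-\overline p_{s}}\le 2\cro{\tau_{x}(p,st)+\tau_{y}(p,p(a+s))}.
\]

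The crux is to show that $\overline p_{s}=(p\wedge B)\1_{I_{s}}/c_{s}$, with $c_{s}=\int_{I_{s}}(p\wedge B)\,d\mu$, is a nonincreasing density on $(a,a+l)$ belonging to $\overline\cM_{3}^{\infty}(R)$. Nonincreasingness and the support condition are immediate ($I_{s}\subseteq(a,a+l)$ and $p\wedge B$ is nonincreasing); since $p$ is nonincreasing one has $\sup\overline p_{s}=B/c_{s}$, so regarding $\overline p_{s}$ as a density on the smallest interval $(a,a+L_{s})$ containing its support (note $\overline p_{s}=B/c_{s}>0$ on $(a,a+s)$), its length satisfies $L_{s}\le st$ and its variation $V_{s}\le B/c_{s}$. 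The key estimate is that $t\ge 1$ makes $(a,a+s)\subseteq I_{s}$, where $p\ge p(a+s)=B$, whence $c_{s}\ge sB$; therefore $V_{s}\le 1/s$ and $L_{s}V_{s}\le t$, and a routine check of the equality case (it would force $c_{s}=sB$, i.e.\ $p\equiv 0$ on $(a+s,a+st)$, hence $L_{s}\le s$, which is incompatible with $L_{s}V_{s}=t$ once $t>1$ and forces $V_{s}=0$ when $t=1$) upgrades this to $L_{s}V_{s}<t$, so $\bs{R}_{3,0}(\overline p_{s})\le\log(1+L_{s}V_{s})<\log(1+t)=R$. Taking the infimum over the admissible $s$ then gives
\[
\inf_{\overline p}\norm{p-\overline p}\le 2\inf_{s>0}\cro{\tau_{x}(p,st)+\tau_{y}(p,p(a+s))}=2\tau(p,t)=2\tau\pa{p,\exp(R)-1},
\]
the left-hand infimum running over the nonincreasing densities on $(a,a+l)$ in $\overline\cM_{3}^{\infty}(R)$, which establishes \eqref{lem-tronc-eq2} and the last assertion at once. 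I expect the membership $\overline p_{s}\in\overline\cM_{3}^{\infty}(R)$ — turning the bound $c_{s}\ge sB$ into $\bs{R}_{3,0}(\overline p_{s})<R$ and handling the degenerate shapes of $\overline p_{s}$ — to be the only genuinely delicate point; the rest is bookkeeping with the tail functions of Definition~\ref{def-conjugate} and the elementary inequality \eqref{lem-tronc-eq1}.
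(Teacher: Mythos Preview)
Your proof is correct and follows essentially the same approach as the paper's: apply Lemma~\ref{lem_l1_proj} for \eqref{lem-tronc-eq1}, then for each admissible $s$ build $\overline p_{s}$ by capping at $B=p(a+s)$ and restricting to an interval of length $\le st$, use $c_{s}\ge sB$ to force $\bs{R}_{3,0}(\overline p_{s})<R$, and optimise over $s$. Your unified treatment of $l=+\infty$ and $l<+\infty$ via $I_{s}=(a,a+st)\cap(a,a+l)$ and the reduction to $s$ with $p(a+s)>0$ (using $\tau_{y}(p,0)=1$) is slightly more streamlined than the paper's case split, which handles $s\ge l/t$ separately by a monotonicity argument; the paper obtains the strict inequality $\bs{R}_{3,0}<R$ more directly by noting $\inf_{I}\overline p_{s}>0$ when $st<l$, whereas you argue by exclusion of the equality case --- both work.
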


\begin{proof}
Since $p$ is not equal to 0 a.e.\ on $I$, $\int_{I}(p\wedge B)d\mu>0$ and $p_{| I}^{\wedge B}$ is therefore a well-defined density on $I$. By Lemma~\ref{lem_l1_proj}, 
\begin{align*}
\int_{\R}\ab{p-p_{| I}^{\wedge B}}d\mu&\le 2\int_{\R}\ab{p-(p\wedge B)\1_{I}}d\mu= 2 \int_{I^{c}}pd\mu+2 \int_{I}\pa{p-B}_{+}d\mu.
\end{align*}
Changing $p$ into $x\mapsto p(-x)$ if necessary and possibly changing the value of $p$ at the endpoint of the half-line, we may assume without loss of generality that $p$ is a nonincreasing density on a half-line of the form $(a,+\infty)$ with $a\in\R$. Let us now set 
\[
l=\sup\{z>0,\; p(a+z)>0\}\in (0,+\infty]\quad \text{and}\quad t=\exp(R)-1\ge 1.
\]

We first consider the case where $l=+\infty$.  Given $s>0$, let us take $B=p(a+s)>0$ and $I=(a,a+st)$. Since $p$ is nonincreasing on $(a,+\infty)$, $p(x)\ge p(a+s)=B$ for all $x\in (a,a+s)\subset I$ and consequently
\[
\int_{I}(p\wedge B)d\mu\ge \int_{a}^{a+s}(p\wedge B)d\mu=sB.
\]
The density $\overline p_{s}=p_{| I}^{\wedge B}$  belongs to $\overline \cM_{3}^{\infty}$, is supported on an interval of length not larger than $st$ and its variation on $I$ is not larger than 
\[
\frac{B}{\int_{I}(p\wedge B)d\mu}-p(a+st)<\frac{B}{\int_{I}(p\wedge B)d\mu}.
\]
Hence, it follows from~\eref{def-Rp} that
\begin{align*}
\bs{R}_{k,0}(\overline p_{s})< \log\pa{1+\frac{tsB}{\int_{I}(p\wedge B)d\mu}}\le  \log(1+t)=R
\end{align*}
and consequently, $\overline p_{s}\in \overline \cM_{3}^{\infty}(R)$. Applying~\eref{lem-tronc-eq1} and Lemma~\ref{lem-ty}, we obtain that for all $s>0$
\begin{equation}\label{eq-infs2}
\inf_{\overline p\in \overline \cM_{3}^{\infty}(R)}\norm{p-\overline p}\le \norm{p-\overline p_{s}}\le 2\cro{\tau_{x}(p,st)+\tau_{y}\pa{p,p(a+s)}}
\end{equation}
and we derive~\eref{lem-tronc-eq2} from~\eref{eq-tail}. Since for all $s>0$, $\overline p_{s}$ is a density on $(a,+\infty)$, we may restrict the infimum to these densities in $\overline \cM_{3}^{\infty}(R)$ that satisfy this property. 

Let us now turn to the case where $l<+\infty$ and define $s_{0}=l/t\le l$. Given $s\in (0,s_{0})$, we take $B=p(a+s)>0$ and $I=(a,a+st)$. Since $st<l$, $p(a+st)>0$ and by arguing as before, we obtain that 
\[
\inf_{\overline p\in \overline \cM_{3}^{\infty}(R)}\norm{p-\overline p}\le 2\cro{\tau_{x}(p,st)+\tau_{y}\pa{p,p(a+s)}}\quad \text{for all $s\in (0,s_{0})$.}
\]
It follows from the monotonicity of $\tau_{y}(p,\cdot)$ that for all $0<s<s_{0}\le s'$, 
\[
\tau_{y}\pa{p,p(a+s)}\le \tau_{y}\pa{p,p(a+s_{0})}\le \tau_{y}\pa{p,p(a+s')},
\]
and since the mapping $u\mapsto \tau_{x}(p,u)$ is continuous and nonincreasing,  for all $s'\ge s_{0}$ 
\begin{align*}
&\inf_{s\in (0,s_{0})}\cro{\tau_{x}(p,st)+\tau_{y}\pa{p,p(a+s)}}\\
& \le \inf_{s\in (0,s_{0})}\tau_{x}(p,st)+\tau_{y}\pa{p,p(a+s')}=\tau_{x}(p,s_{0}t)+\tau_{y}\pa{p,p(a+s')}\\
&=0+\tau_{y}\pa{p,p(a+s')}=\tau_{x}(p,s't)+\tau_{y}\pa{p,p(a+s')}.
\end{align*}
Consequently, \eref{eq-infs2} remains satisfied for all $s>0$. Since it is actually enough to restrict the infimum to those $s\in (0,s_{0})$ and since for such values of $s$ the density $\overline p_{s}=p_{| I}^{\wedge B}$ vanishes outside $(a,a+s)\subset (a,a+l)$, we  may restrict the infimum in~\eref{lem-tronc-eq2} to those densities in $\overline \cM_{3}^{\infty}(R)$ that vanish outside $(a,a+l)$.
\end{proof}

Let us set $\eta=\tau_{\infty}\pa{p,\exp\pa{R/\ell}-1}$. Since $R/\ell\ge \log 2$, by applying Lemma~\ref{lem-tronc} to the densities $p_{i}$, we may find for all $i\in\{1,\ldots,\ell\}$ a density $\overline p_{i}\in\overline \cM_{3}^{\infty}(R/\ell)$ such that $\|p_{i}-\overline p_{i}\|\le 2\tau\pa{p_{i},\exp(R/\ell)-1}$.  In particular, the density $\overline p=\sum_{i=1}^{\ell}w_{i}\overline p_{i}$ satisfies 
\begin{equation}\label{thm-monogene-eq1}
\norm{p-\overline p}\le \sum_{i=1}^{\ell}w_{i}\norm{p_{i}-\overline p_{i}}\le \max_{i\in\{1,\ldots,\ell\}}\norm{p_{i}-\overline p_{i}}\le 2\eta.
\end{equation}

When $\ell>2$ and $i\in\{2,\ldots,\ell-1\}$, it follows from the definition of $\overline \cM_{3}^{\infty}(R/\ell)$ and Lemma~\ref{lem-tronc} that we may choose $\overline p_{i}$ in such a way that it vanishes outside an interval $I_{i}=(x_{i,0},x_{i,1})\subset (x_{i-1},x_{i})$ with 
\[
\log\cro{1+\pa{x_{i,0}-x_{i,1}}\pa{\sup_{x\in I_{i}}\overline p_{i}(x)-\inf_{x\in I_{i}}\overline p_{i}(x)}}< \frac{R}{\ell}
\]
and $x_{i,0}=x_{i-1}$ when $p_{i}$ is nonincreasing and $x_{i,1}=x_{i}$ when $p_{i}$ is nondecreasing. The mapping $\overline p_{1}$ is a nondecreasing density on an interval of the form $I_{1}=(x_{1,0},x_{1,1})$ with $x_{1,0}<x_{1,1}=x_{1}$ and 
\[
\log\cro{1+\pa{x_{1,1}-x_{1,0}}\pa{\sup_{x\in I_{1}}\overline p_{1}(x)-\inf_{x\in I_{1}}\overline p_{1}(x)}}< \frac{R}{\ell}.
\]
Similarly, $\overline p_{\ell}$ is a nonincreasing density on an interval of the form $I_{\ell}=(x_{\ell,0},x_{\ell,1})$ with $x_{\ell,0}=x_{\ell-1}<x_{\ell,1}$ and 
\[
\log\cro{1+\pa{x_{\ell,1}-x_{\ell,0}}\pa{\sup_{x\in I_{\ell}}\overline p_{\ell}(x)-\inf_{x\in I_{\ell}}\overline p_{\ell}(x)}}< \frac{R}{\ell}.
\]
The density $\overline p=\sum_{i=1}^{\ell}w_{i}\overline p_{i}$ can also be written as 
\[
w_{1}\overline p_{1}\1_{(x_{1,0},x_{1})}+\sum_{i=2}^{\ell-1}w_{i}\cro{\overline p_{i}\1_{I_{i}}+0\1_{(x_{i-1},x_{i})\setminus I_{i}}}+w_{\ell}\overline p_{\ell}\1_{(x_{\ell-1},x_{\ell,1})}
\]
and may therefore be written in the form~\eref{def-mbark} when $k\ge 2\ell$. Moreover, by the Cauchy-Schwarz inequality 
\begin{align*}
\bs{R}_{k,0}(\overline p)< \cro{\sum_{i=1}^{\ell}\sqrt{w_{i}\pa{\frac{R}{\ell}}}+0}^{2}=\frac{R}{\ell}\cro{\sum_{i=1}^{\ell}\sqrt{w_{i}}}^{2}\le R
\end{align*}
and consequently, $\overline p\in\overline \cM_{k}^{\infty}(R)$. We deduce from~\eref{thm-monogene-eq1} that
\begin{align*}
\inf_{q\in \overline \cM_{k}^{\infty}(R)}\norm{p-q}&\le \norm{p-\overline p}\le 2\eta
\end{align*}
which is \eref{thm-monogene-eq0}.
\end{proof}

\subsection{Proofs of Section \ref{sect-cvxcve}}\label{subsect-proof-cvxcve}
\begin{proof}[Proof of Proposition \ref{prop_extr_cvx_cve}]
The proof relies on the following lemma the proof of which is a direct consequence of convexity and is therefore omitted.
\begin{lem}\label{lem-5}
Let $g$ be a convex and continuous function on a nontrivial interval $J$. The set $\{x\in J,\; g(x)<0\}$ is an interval (possibly empty). The set $\{x\in J,\; g(x)>0\}$ has one of the following forms: $\varnothing, J$, $J\cap (-\infty, c_{0})$, $J\cap (c_{1},+\infty)$, $[J\cap (-\infty, c_{0})]\cup [J\cap (c_{1},+\infty)]$ with $c_{0},c_{1}\in J$ and $c_{0}<c_{1}$. In particular $\{x\in J,\; g(x)>0\}$ is the union of at most two intervals. When $g$ is continuous and concave on $J$, the same conclusion holds with $\{x\in J,\; g(x)>0\}$ in place of $\{x\in J,\; g(x)<0\}$ and vice-versa.
\end{lem}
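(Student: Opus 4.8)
The plan is to derive Lemma~\ref{lem-5} directly from the defining inequality of convexity, with continuity entering only to locate the endpoints. Suppose first that $g$ is convex and continuous on the nontrivial interval $J$. If $x,y\in J$ satisfy $g(x)<0$ and $g(y)<0$, then for every $\lambda\in[0,1]$ convexity gives $g\bigl(\lambda x+(1-\lambda)y\bigr)\le\lambda g(x)+(1-\lambda)g(y)<0$; hence $\{x\in J,\ g(x)<0\}$ is a convex subset of $\R$, i.e.\ an interval (possibly empty). Running the same computation with $\le$ in place of $<$ shows that $K:=\{x\in J,\ g(x)\le 0\}$ is likewise a subinterval of $J$, possibly empty and possibly equal to $J$.

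Next I would describe $\{x\in J,\ g(x)>0\}=J\setminus K$. If $K=\varnothing$ this set equals $J$, and if $K=J$ it is empty; otherwise $K$ is a nonempty proper subinterval of $J$, and I set $c_0=\inf K$ and $c_1=\sup K$, so that $c_0\le c_1$. Since $g$ is continuous, $\{g>0\}$ is relatively open in $J$ and $K$ relatively closed in $J$, and this is exactly what ensures that, whenever $c_0>\inf J$ (so that the left part $J\cap(-\infty,c_0)$ is nonempty), the infimum is attained and $c_0\in J$, and symmetrically $c_1\in J$ whenever $c_1<\sup J$. Removing the interval $K$ from the interval $J$ then leaves $J\setminus K=\bigl(J\cap(-\infty,c_0)\bigr)\cup\bigl(J\cap(c_1,+\infty)\bigr)$, whose left piece is empty precisely when $K$ contains the left endpoint of $J$ and whose right piece is empty precisely when $K$ contains the right endpoint of $J$. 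This produces exactly the five forms listed (in the genuine two-piece case $c_0<c_1$; the degenerate case $c_0=c_1$, where $K$ is a single point, still leaves a union of two intervals), so in particular $\{g>0\}$ is a union of at most two intervals. As a sanity check one may note that an ``interior'' set $(c_0,c_1)\subset\mathring J$ can never arise for $\{g>0\}$: if $g(a)\le 0$, $g(b)\le 0$ and $a<c<b$, then writing $c=\lambda a+(1-\lambda)b$ gives $g(c)\le 0$.

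Finally, for the concave case I would apply the convex case to $-g$, which is convex and continuous on $J$: then $\{x\in J,\ g(x)>0\}=\{x\in J,\ (-g)(x)<0\}$ is an interval, while $\{x\in J,\ g(x)<0\}=\{x\in J,\ (-g)(x)>0\}$ has one of the five forms above, which is the asserted ``vice versa''. I do not anticipate any genuine obstacle here: the only point needing a little care is the endpoint bookkeeping in the second paragraph, namely verifying that $c_0$ and $c_1$ actually lie in $J$ whenever the corresponding piece is nonempty — and this is precisely where continuity of $g$, rather than mere convexity, is used. This is presumably why the paper calls the statement ``a direct consequence of convexity'' and omits the argument.
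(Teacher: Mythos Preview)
Your argument is correct and is exactly the ``direct consequence of convexity'' that the paper alludes to before omitting the proof: convexity of $\{g<0\}$ and $\{g\le 0\}$ from the defining inequality, then $\{g>0\}=J\setminus K$ with continuity pinning down the endpoints. Your remark on the degenerate case $c_0=c_1$ is apt---the lemma as stated does not literally cover it, but the conclusion actually used downstream (union of at most two intervals) is unaffected.
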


Since $p$ belongs to $\overline \cM_{k}^{1}$ and $q$ belongs to $\overline{\cO}_{D,k}^1$, there exists $A=\{a_{1},\ldots,a_{l}\}$ with $l\in\{1,\ldots,k-1\}$ such that $p$ is convex-concave on each element of $\gI(A)$ and there exists a subset $B\subset \R$ with cardinality not larger than $D+2$ such that $q$ is left-continuous on $\R$ and affine on each element of $\gI(B)$. Since $p$ and $q$ are densities, $p$ is necessarily convex on the two unbounded intervals of $\gI(A)$ and $q$ vanishes on the two unbounded intervals of $\gI(B)$. 

We define $J_{1}=(-\infty,a_{1}]$, $J_{l+1}=(a_{l},+\infty)$ and when $l\ge 2$, $J_{i}=(a_{i-1},a_{i}]$ for all $i\in\{2,\ldots,l\}$. Besides, we set $I_{i}=\mathring{J}_{i}$ for all $i\in\{1,\ldots,l+1\}$. We shall repeatedly use Lemma~\ref{lem-5} with $g=p-q$ throughout the proof. Given $\epsilon \in\{\pm 1\}$, we set 
\begin{equation}\label{decomp-Ceps}
C_{\epsilon}=\ac{x\in\R,\; \epsilon g(x)>0}=\bigcup_{i=1}^{l+1}\ac{x\in J_{i},\; \epsilon g(x)>0}.
\end{equation}
Our aim is to show that $C_{\epsilon}$ is the union of at most $D+2k-1$ intervals. The second part of the proposition is a consequence of Lemma~1 of Baraud and Birg\'e~\cite{MR3565484}. 

If $m_{1}=|B\cap I_{1}|=0$ then $q=0$ on $J_{1}$ (since it is left-continuous), $g=p-0$ is continuous, monotone (nondecreasing) and convex on $I_{1}$, $\{x\in J_{1},\; g(x)<0\}=\varnothing$ and $\{x\in J_{1},\; g(x)>0\}$ is an interval. 

If $m_{1}=|B\cap I_{1}|\ge 1$, we may partition $J_{1}$ into $s=m_{1}+1\ge 2$ consecutive intervals $K_{1},\ldots,K_{s}$ that we may choose to be of the form $(a,b]$, $a<b$, $a\in\R\cup\{-\infty\}$, $b\in\R$. Since $p$ is continuous on $I_{1}$ and $q$ is left-continuous, $g$ is continuous on $K_{1},\ldots,K_{s-1}$ and on $\mathring{K}_{s}$. On $K_{1}$, $\{x\in K_{1},\; g(x)<0\}=\varnothing$ and the set $\Lambda_{1,]}^{+}=\{x\in K_{1},\; g(x)> 0\}$ is an interval which is either empty or contains the right endpoint of $K_{1}$. When $s\ge 2$, we may apply Lemma~\ref{lem-5} to $g$ and the intervals $K_{i}$ with $i\in\{2,\ldots,s-1\}$. We obtain that  $\Lambda_{i}^{-}=\{x\in K_{i},\; g(x)<0\}$ is an interval and $\{x\in K_{i},\; g(x)>0\}$ is of the form $\Lambda_{i,(}^{+}\cup\Lambda_{i,]}^{+}$ where $\Lambda_{i,(}^{+},\Lambda_{i,]}^{+}$ are two (possibly empty) intervals and when they are not, $\inf\Lambda_{i,(}^{+}=\inf K_{i}$ and the right endpoint of $K_{i}$ belongs to $\Lambda_{i,]}^{+}$. The set  $\{x\in K_{s},\; g(x)<0\}$ can also be written as  $\Lambda_{s}^{-}\cup\Lambda_{s,]}^{-}$ where $\Lambda_{s}^{-},\Lambda_{s,]}^{-}$ are two possibly empty intervals and when  $\Lambda_{s,]}^{-}$ is not empty, it reduces to $\{a_{1}\}$. The set $\{x\in K_{s},\; g(x)>0\}$ writes $\Lambda_{s,(}^{+}\cup\Lambda_{s}^{+}$ where $\Lambda_{s,(}^{+},\Lambda_{s}^{+}$ are two possibly empty intervals and when they are not $\inf \Lambda_{s,(}^{+}=\inf K_{s}$ and $\sup \Lambda_{s}^{+}=\sup K_{s}$. We conclude that 
\begin{align*}
\ac{x\in J_{1},\; g(x)<0}=\cro{\bigcup_{i=2}^{s-1}\Lambda_{i}^{-}}\cup\cro{\Lambda_{s}^{-}\cup\Lambda_{s,]}^{-}}
\end{align*}
and 
\begin{align*}
\ac{x\in J_{1},\; g(x)>0}&=\Lambda_{1,]}^{+}\cup\cro{\bigcup_{i=2}^{s-1}\pa{\Lambda_{i,(}^{+}\cup\Lambda_{i,]}^{+}}}\cup\cro{\Lambda_{s,(}^{+}\cup\Lambda_{s}^{+}}
\end{align*}
are both the unions of at most $s=m_{1}+1$ intervals. 

By arguing similarly, we obtain that on the interval $J_{l+1}$: the sets $\{x\in J_{l+1},\; \epsilon g(x)>0\}$ with $\epsilon \in\{\pm 1\}$ are the unions of at most $m_{l+1}+1$ intervals where $m_{l+1}=|B\cap J_{l+1}|$.

When $l\ge 2$, let us now consider an interval of the form $J_{i}=(a_{i-1},a_{i}]$ with $i\in\{2,\ldots,l\}$ and set $m_{i}=|B\cap I_{i}|$. If $m_{i}=0$, $g$ is continuous and convex-concave on $I_{i}$ and by arguing as for $K_{s}$, we obtain that $\{x\in J_{i},\; \epsilon g(x)>0\}$ is the union of at most 2 intervals for every $\epsilon \in\{\pm 1\}$. If $m_{i}\ge 1$, we may partition $I_{i}$ with $m_{i}+1$ intervals of the form $(a,b]$ with $a<b$, $a,b\in\R$. On each of these intervals, $g$ is continuous and convex-concave and by applying Lemma~\ref{lem-5} and arguing as previously, we obtain that $\{x\in J_{i},\; \epsilon g(x)>0\}$ is a union of at most $m_{i}+2$ intervals. 

Using~\eref{decomp-Ceps} and the facts that $\sum_{i=1}^{l+1}m_{i}\le |B|\le D+1$ and  $l\le k-1$, we conclude that the sets $C_{\varepsilon}$ are unions of at most 
\begin{align*}
m_{1}+1+m_{l+1}+1+\sum_{i=2}^{l}\pa{m_{i}+2}\le |B|+2l\le  D+2k-1
\end{align*}
intervals.
\end{proof}

\begin{proof}[Proof of Theorem \ref{thm-approx-conv}] The proof is based on Proposition \ref{prop-approx-convexe} as well as two other approximation results, namely Proposition \ref{prop-approx-con1} and Proposition \ref{prop-approx-con2}  below. We shall additionally use a elementary lemma to simplify some calculations. The proofs the two propositions and the lemma follow that of Theorem \ref{thm-approx-conv}. 

\begin{prop}\label{prop-approx-con1}
Let $p$ be a monotone, continuous and convex-concave sub-density on a bounded interval $[a,b]$ of length $L>0$ with a right derivative $p'_{r}(a)$ at $a$ and a left derivative $p'_{l}(b)$ at $b$. For all $D\ge 1$, there exists a $D$-linear interpolation $\overline p$ of $p$ such that
\begin{equation}\label{eq-approx-monoconv01}
\int_{a}^{b}\ab{p-\overline p}d\mu\le \frac{4}{3}\cro{\pa{1+L\sqrt{\ab{p'_{l}(b)-p'_{r}(a)}}}^{1/D}-1}^{2}.
\end{equation}
\end{prop}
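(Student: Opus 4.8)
The plan is to prove Proposition~\ref{prop-approx-con1} by a geometric-sequence construction of the interpolation points, in the same spirit as the proof of Proposition~\ref{prop-birge}, but now using the quadratic bound on the chord error supplied by Proposition~\ref{prop-approx-convexe}\ref{cas-conv2} rather than the crude trapezoidal estimate. Without loss of generality I would reduce to the case where $p$ is nonincreasing and convex on $[a,b]$ (the other three monotone/convex-concave combinations follow by the symmetries $x\mapsto -x$ and $y\mapsto -y$, which preserve the class of functions and the quantity $|p'_l(b)-p'_r(a)|$, and by the fact that for a concave $p$ the chord lies below so $|p-\ell_p|$ is controlled identically). I would also dispose of the trivial cases: if $D=1$ take $\overline p=\ell_p$ and invoke \eqref{eq-approx-convexe-01} directly after checking $\tfrac{(p(b)-p(a))^2}{8}|1/p'_r(a)-1/p'_l(b)|\le\tfrac{(b-a)^2}{8}|p'_r(a)-p'_l(b)|$-type bounds; and if $p'_r(a)=p'_l(b)$ then $p$ is affine so any interpolation is exact and the right-hand side is $0$.

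In the main case, the idea is to split $[a,b]=\bigcup_{j=1}^D[x_{j-1},x_j]$ at points chosen so that each piece contributes roughly equally. On $[x_{j-1},x_j]$, Proposition~\ref{prop-approx-convexe}\ref{cas-conv2} gives
\[
\int_{x_{j-1}}^{x_j}|p-\overline p|\,d\mu\le \frac{(p(x_{j-1})-p(x_j))^2}{8}\left(\frac{1}{|p'_l(x_j)|}-\frac{1}{|p'_r(x_{j-1})|}\right),
\]
so the natural quantity to equalize is the variation of the reciprocal slope, or equivalently the variation of $p$ weighted by slopes. Concretely I would introduce the ``conjugate'' change of variable: since $p$ is nonincreasing convex, $|p'|$ is nonincreasing, and one can parametrize by letting the increments of $p$ across the pieces form a geometric-type progression tuned to the product $L\sqrt{|p'_l(b)-p'_r(a)|}$. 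Set $\eta=\bigl(1+L\sqrt{|p'_l(b)-p'_r(a)|}\bigr)^{1/D}-1$ and choose the subdivision so that on each subinterval the local analogue of $(b-a)\sqrt{|p'_r(a)-p'_l(b)|}$ equals $\eta$. Summing the $D$ per-piece bounds, telescoping the slope-reciprocal differences, and using $\sum_{j}(\cdot)\le(\sum_j\sqrt{\cdot})^2/1$ together with the elementary estimate leads to a bound of the form $\tfrac{4}{3}\eta^2$; the constant $4/3$ should drop out of an optimization $\sup_{0\le u\le 1}\frac{u(1-u)^2}{\text{something}}$ or from combining the $1/8$ with the telescoping.

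The step I expect to be the real obstacle is producing the subdivision that makes the telescoping clean while simultaneously controlling \emph{both} the lengths $x_j-x_{j-1}$ and the slope increments $|p'_r(x_{j-1})|-|p'_l(x_j)|$ — because unlike in Proposition~\ref{prop-birge}, here the per-piece error mixes a length factor and a derivative-jump factor, and $p$ may fail to be differentiable (only one-sided derivatives exist, and $p'$ may jump). To handle this rigorously I would work with the convex conjugate / Legendre-type transform of $p$ (or with $\widetilde p$ as in Definition~\ref{def-conjugate}), observe that convexity of $p$ makes the relevant monotone rearrangement well-behaved, and define the $x_j$ implicitly through level-crossing of $|p'|$ combined with a length constraint, splitting any single point where $|p'|$ jumps by more than the target amount into its own degenerate piece of zero length (which contributes zero error). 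Once the subdivision is in place, the remaining computation is the geometric-progression bookkeeping already rehearsed in the proof of Proposition~\ref{prop-birge}, plus the inequality $\bigl(\sum_{j=1}^D a_j\bigr)^2\le D\sum_{j=1}^D a_j^2$ in the reverse direction and the convexity bound $e^t-1\ge t$; the factor $4/3$ will come from bounding a product of the shape $u\cdot(1+\eta-1)$ appearing after the telescoping of $1/|p'|$.
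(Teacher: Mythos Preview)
Your proposal contains a genuine confusion between the two dual bounds in Proposition~\ref{prop-approx-convexe}. You plan to apply \ref{cas-conv2} on each piece, i.e.\ the estimate $\frac{(\Delta p_j)^2}{8}\,\Delta(1/|p'|)_j$, and then telescope the reciprocal slopes. But summing such terms naturally produces a bound of the form $V^2\bigl|1/p'_r(a)-1/p'_l(b)\bigr|$, which is the content of Proposition~\ref{prop-approx-con2}, not the target $L^2\bigl|p'_l(b)-p'_r(a)\bigr|$ of Proposition~\ref{prop-approx-con1}. There is no general inequality converting one into the other, and your proposed Legendre/conjugate transform would again land you on the dual statement. (Indeed, in the paper Proposition~\ref{prop-approx-con2} is \emph{derived from} \ref{prop-approx-con1} via this very duality, not the other way around.) A concrete symptom: in your reduction $p$ is nonincreasing convex, so $p'_r(a)$ may vanish, and then bound~\ref{cas-conv2} does not even apply on the first piece.

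The key idea you are missing is this. Reduce instead to the nondecreasing convex case and subtract the tangent at $a$: set $g=p-\ell$ with $\ell(x)=p(a)+p'_r(a)(x-a)$, so that $g$ is nondecreasing convex with $g(a)=g'_r(a)=0$ and $\int_a^b g\,d\mu\le 1$. Now use bound~\ref{cas-conv1} (not \ref{cas-conv2}) on each piece of a \emph{decreasing} geometric subdivision $\Delta_{i+1}=q^{-1}\Delta_i$, and perform an Abel summation on $\sum_i\Delta_i^2(g'(x_i)-g'(x_{i-1}))$. The sub-density constraint $\int g\le 1$ enters twice: once to bound $\sum g'(x_i)\Delta_{i+1}^2$ by an integral of $g$, and once through the convexity inequality $g(b)\le\sqrt{2g'_l(b)}$ (from $\int g\ge g(b)^2/(2g'_l(b))$). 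Optimizing $q$ via $q^D-1=L\sqrt{g'_l(b)}=L\sqrt{|p'_l(b)-p'_r(a)|}$ yields the bound with constant $\tfrac{1+4(1+\sqrt{2})}{8}<\tfrac{4}{3}$. The symmetries you mention then cover the remaining three cases. Your ``obstacle'' of mixing length and slope factors is real, and it is the tangent subtraction together with $\int g\le 1$ that resolves it---not duality.
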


\begin{prop}\label{prop-approx-con2}
Let $p$ be a strictly monotone, continuous, convex-concave sub-density  with variation $V=|p(a)-p(b)|$ on a nontrivial bounded interval $[a,b]$. Assume furthermore that $p'_{r}(a)$ and $p'_{l}(b)$ are nonzero. Then, for all $D\ge 1$ there exists a $D$-linear interpolation $\overline p$ of $p$ on $[a,b]$ such that
\begin{equation}\label{eq-cor-monoconv01}
\int_{a}^{b}\ab{p-\overline p}d\mu\le \frac{4}{3}\cro{\pa{1+V\sqrt{\ab{\frac{1}{p'_{r}(a)}-\frac{1}{p'_{l}(b)}}}}^{1/D}-1}^{2}.
\end{equation}
\end{prop}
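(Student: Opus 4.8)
The plan is to deduce Proposition~\ref{prop-approx-con2} from Proposition~\ref{prop-approx-con1} by passing to the inverse function of $p$. Replacing $p$ by $x\mapsto p(a+b-x)$ if necessary — a substitution that leaves $V$, the number $\ab{1/p'_{r}(a)-1/p'_{l}(b)}$ and the family of $D$-linear interpolations unchanged — I may assume that $p$ is nonincreasing; being \emph{strictly} monotone it then satisfies $p(a)>p(b)$, so $V=p(a)-p(b)>0$. Let $g=p^{-1}\colon[p(b),p(a)]\to[a,b]$ be the (continuous, strictly decreasing) inverse of $p$ and set $h=g-a$, a nonnegative function on $[p(b),p(a)]$, an interval of length $V$.

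First I would verify that $h$ fulfils every hypothesis of Proposition~\ref{prop-approx-con1} on $[p(b),p(a)]$. It is continuous, strictly decreasing, with values in $[0,b-a]$. It is convex-concave: reflecting the epigraph (resp.\ hypograph) of $p$ across the first diagonal produces the epigraph (resp.\ hypograph) of $g$, so $g$, hence $h$, is convex whenever $p$ is convex and concave whenever $p$ is concave. It is a sub-density: splitting the region under the graph of $p$ at the height $p(b)$ gives $\int_{p(b)}^{p(a)}h\,d\mu=\int_{a}^{b}p\,d\mu-(b-a)p(b)\le\int_{a}^{b}p\,d\mu\le1$. Finally, because the one-sided derivatives of the inverse of a monotone function are the reciprocals of the corresponding one-sided derivatives, $h$ has the finite right derivative $1/p'_{l}(b)$ at $p(b)$ and the finite left derivative $1/p'_{r}(a)$ at $p(a)$ — finiteness being exactly the hypothesis that $p'_{l}(b)$ and $p'_{r}(a)$ are nonzero, with the convention $1/(\pm\infty)=0$ when $p$ has a vertical tangent at an endpoint.

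Applying Proposition~\ref{prop-approx-con1} to $h$ then furnishes a $D$-linear interpolation $\overline h$ of $h$, attached to a subdivision $p(b)=s_{0}<\dots<s_{D}=p(a)$, with $\int_{p(b)}^{p(a)}\ab{h-\overline h}\,d\mu\le\frac{4}{3}\cro{\bigl(1+V\sqrt{\ab{1/p'_{r}(a)-1/p'_{l}(b)}}\bigr)^{1/D}-1}^{2}$, which is precisely the right-hand side of~\eqref{eq-cor-monoconv01}. It remains to carry $\overline h$ back to a $D$-linear interpolation of $p$ with the same $\L_{1}$ error. Since $h$ is strictly decreasing so is $\overline h$, hence $\overline h+a$ is a strictly decreasing, piecewise affine bijection from $[p(b),p(a)]$ onto $[a,b]$; I set $\overline p=(\overline h+a)^{-1}$. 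With $x_{j}=p^{-1}(s_{D-j})$ one checks $a=x_{0}<\dots<x_{D}=b$, that $\overline p$ is affine on each $[x_{j-1},x_{j}]$ (the inverse of an affine monotone map), and that $\overline p(x_{j})=s_{D-j}=p(x_{j})$, so $\overline p$ is indeed a $D$-linear interpolation of $p$.

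The last step, which I expect to be the delicate one, is the identity $\int_{a}^{b}\ab{p-\overline p}\,d\mu=\int_{p(b)}^{p(a)}\ab{h-\overline h}\,d\mu$. By the remark following Definition~\ref{def-linear}, $p-\overline p$ and $h-\overline h$ each keep a constant sign, so the left-hand side is the Lebesgue area of the planar region comprised between the graphs of $p$ and $\overline p$ over $[a,b]$. Because $p$ and $\overline p$ are both strictly decreasing, with inverses $g$ and $\overline h+a$, the reflection $(x,y)\mapsto(y,x)$ sends this region onto the region comprised between the graphs of $g$ and $\overline h+a$ over $[p(b),p(a)]$; reflection preserves area, and translating the inner variable by $a$ identifies this area with $\int_{p(b)}^{p(a)}\ab{h-\overline h}\,d\mu$. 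Combining this identity with the bound of Proposition~\ref{prop-approx-con1} yields~\eqref{eq-cor-monoconv01}. The only technical points to watch are the endpoint cases where a one-sided derivative of $p$ is infinite, and the rigorous justification that reflecting the region between two comonotone graphs gives the region between the two inverse graphs; both follow from the strict monotonicity of all the functions at hand.
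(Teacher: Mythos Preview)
Your proof is correct and follows essentially the same route as the paper: reduce to the decreasing case, pass to the inverse function, apply Proposition~\ref{prop-approx-con1} to that inverse (which is a monotone convex-concave sub-density on an interval of length $V$ with one-sided derivatives $1/p'_{l}(b)$ and $1/p'_{r}(a)$), and pull the resulting $D$-linear interpolation back through the inverse. The only presentational difference is that the paper isolates the two key facts you argue inline --- that $\int_I|f-g|\,d\mu=\int_J|f^{-1}-g^{-1}|\,d\mu$ for comonotone bijections and that the inverse of a nonnegative decreasing sub-density on $[0,b]$ is again a sub-density --- as a separate lemma (Lemma~\ref{lem_sym_cvx_dec}), proved via Fubini rather than your area-reflection picture; your direct computation $\int_{p(b)}^{p(a)}h\,d\mu=\int_a^b p\,d\mu-(b-a)p(b)$ is in fact slightly sharper than what the paper's lemma gives.
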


\begin{lem} \label{lem_fun_cve}
If $F$ is a nondecreasing, differentiable, concave function on $\R_{+}$, the mapping 
\[
\phi: u \mapsto \left[ F\pa{\sqrt{u}} - F(0) \right]^2
\]
is concave on $\mathbb{R}_+$. In particular for all $D\ge 1$, 
\[
F_{D}:u\mapsto \cro{\pa{1+\sqrt{u}}^{1/D}-1}^{2}
\]
is concave on $\R_{+}$.
\end{lem}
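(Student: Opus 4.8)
The plan is to prove the first assertion — that $\phi(u) = [F(\sqrt u) - F(0)]^2$ is concave on $\R_+$ — and then deduce the second as a special case. First I would reduce to the case $F(0) = 0$: replacing $F$ by $G = F - F(0)$ changes neither the hypotheses (monotonicity, differentiability, concavity) nor the function $\phi$, so we may assume $\phi(u) = F(\sqrt u)^2$ with $F \ge 0$, $F$ nondecreasing, differentiable and concave on $\R_+$. I would then compute $\phi'(u)$ by the chain rule, getting $\phi'(u) = F(\sqrt u) F'(\sqrt u) / \sqrt u$ for $u > 0$, and differentiate once more. The cleanest route, however, avoids a second derivative: I would instead show directly that $\phi'$ is nonincreasing on $(0,+\infty)$. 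Writing $v = \sqrt u$, it suffices to show that $v \mapsto F(v)F'(v)/v$ is nonincreasing in $v$ on $(0,+\infty)$ (since $v \mapsto \sqrt u = v$ is increasing, monotonicity is preserved). Both factors $F(v)/v$ and $F'(v)$ are nonincreasing on $(0,+\infty)$: $F'$ is nonincreasing because $F$ is concave, and $F(v)/v$ is nonincreasing because $F$ is concave with $F(0)\ge0$ (the chord from the origin has nonincreasing slope — this is the standard "concave function through a point above the origin has decreasing average slope" fact, which follows from $(F(v)/v)' = (vF'(v)-F(v))/v^2 \le 0$, the numerator being $\le F(0) - 0 \le 0$ by concavity, i.e. $F(v) \ge F(0) + vF'(v) \ge vF'(v)$... wait, that inequality goes the wrong way). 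Let me instead argue: for $0 < v_1 < v_2$, concavity gives $F(v_1) \ge \frac{v_2 - v_1}{v_2}F(0) + \frac{v_1}{v_2}F(v_2) \ge \frac{v_1}{v_2}F(v_2)$, hence $F(v_1)/v_1 \ge F(v_2)/v_2$. Since $F \ge 0$ and $F' \ge 0$, the product of two nonnegative nonincreasing functions is nonincreasing, so $\phi'$ is nonincreasing on $(0,+\infty)$; combined with continuity of $\phi$ on $[0,+\infty)$ this yields concavity of $\phi$ on $\R_+$.

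For the "in particular" statement, I would apply the first part with $F(v) = (1+v)^{1/D}$. This $F$ is nondecreasing, differentiable on $\R_+$, and concave there since $1/D \le 1$ (its second derivative $\frac{1}{D}(\frac{1}{D}-1)(1+v)^{1/D-2}$ is $\le 0$). Then $F(\sqrt u) - F(0) = (1+\sqrt u)^{1/D} - 1$, so $\phi = F_D$ and concavity of $F_D$ on $\R_+$ follows immediately.

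The main obstacle — really the only delicate point — is the monotonicity of $v \mapsto F(v)/v$ on $(0,+\infty)$, which is where the hypothesis $F(0) \ge 0$ (here $F(0)$ has been normalized to $0$) is essential; without it the claim is false. Everything else is a routine chain-rule computation plus the elementary fact that a product of nonnegative nonincreasing functions is nonincreasing. One minor care point is the behaviour at $u = 0$: $\phi'$ may blow up there (e.g. for $F_D$ with $D$ large), but this is harmless since concavity on $[0,+\infty)$ only requires $\phi$ continuous at $0$ and $\phi'$ nonincreasing on the open half-line.
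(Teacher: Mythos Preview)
Your proposal is correct and follows essentially the same approach as the paper: compute $\phi'(u)=\dfrac{F(\sqrt{u})-F(0)}{\sqrt{u}}\,F'(\sqrt{u})$ and observe that it is the product of two nonnegative nonincreasing functions of $u$ (the chord slope $v\mapsto [F(v)-F(0)]/v$ and $v\mapsto F'(v)$, composed with $\sqrt{\cdot}$), hence $\phi'$ is nonincreasing and $\phi$ concave. Your preliminary reduction to $F(0)=0$ is harmless but unnecessary, and your momentary self-doubt about the derivative of $F(v)/v$ was unwarranted---the inequality $F(v)\ge F(0)+vF'(v)$ from concavity does give $vF'(v)-F(v)\le -F(0)\le 0$ after the reduction, so that route also works.
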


Let's turn to the proof of Theorem \ref{thm-approx-conv}. We first introduce some mappings of interest we will use in the proofs of Theorem~\ref{thm-approx-conv} and Proposition~\ref{prop-approx-con1}.
Let $\cV$ be the linear space of continuous functions $f$ on $[a,b]$ that admit a right derivative $f'_{r}(a)$ at $a$ and a left derivative $f'_{l}(b)$ at $b$. Given $m\in\R$, we define $\cT_{1}$ and $\cT_{2}$ as the mappings defined on $\cV$ by 
\begin{equation}\label{def-ctj}
\cT_{1}:f\mapsto [x\mapsto f(a+b-x)],\quad  \cT_{2}:f\mapsto [x\mapsto m-f(x)].
\end{equation}
Although $\cT_{2}$ depends on the choice of $m$, we drop this dependency in the notation for the sake of convenience. The mappings $\cT_{j}$ are one-to-one from $\cV$ onto itself, isometric with respect to the $\L_{1}$-norm on $\cV$ and they satisfy for all $f\in\cV$
\begin{align}
\ab{f(a)-f(b)}&=\ab{\cT_{j}(f)(a)-\cT_{j}(f)(b)}\label{prop1-ctj}\\
\ab{f'_{r}(a)-f'_{l}(b)}&=\ab{(\cT_{j}(f))'_{r}(a)-(\cT_{j}(f))'_{l}(b)}\label{prop2-ctj}
\end{align}
and $\cT_{j}^{-1}=\cT_{j}$ for all $j\in\{1,2\}$. \\
Let us now turn to the proof of Theorem~\ref{thm-approx-conv} and assume first that $p$ is nondecreasing, continuous and convex on $[a,b]$ so that $p_{r}'(a)\ge 0$. If $p$ is constant, the result is clear by taking $\overline p=p$. We may therefore assume that $p(a)<p(b)$ and choose a point $c\in (a,b)$ such that $p(c)>p(a)$. In particular, $w_{1}=\int_{a}^{c}pd\mu>0$ and 
\[
0<\frac{p(c)-p(a)}{c-a}\le p'_{l}(c)\le \frac{p(b)-p(c)}{b-c}<+\infty.
\]
The restriction $p_{1}$ of $p$ on the interval $[a,c]$ is nondecreasing, continuous and  convex and so is the density $p_{1}/w_{1}$. We may therefore apply Proposition~\ref{prop-approx-con1} to $p_{1}/w_{1}$ and find a $D$-linear interpolation $\overline p_{1}$ of $p$ on $[a,c]$ that satisfies
\begin{align}
\int_{a}^{c}\ab{p-\overline p_{1}}d\mu&\le \frac{4w_{1}}{3}\cro{\pa{1+(c-a)\sqrt{\frac{p'_{l}(c)-p'_{r}(a)}{w_{1}}}}^{1/D}-1}^{2}.\label{eq-approx-conv001}
\end{align}
The restriction $p_{2}$ of $p$ to $[c,b]$ is increasing, continuous and convex with nonzero right and left derivatives at $c$ and $b$ respectively. We may therefore apply Proposition~\ref{prop-approx-con2} to the density $p_{2}/w_{2}$ with $w_{2}=\int_{c}^{b}pd\mu=1-w_{1}>0$ and find a $D$-linear interpolation $\overline p_{2}$ of $p$ on $[c,b]$ that satisfies 
\begin{align}
\int_{c}^{b}\ab{p-\overline p_{2}}d\mu\le \frac{4w_{2}}{3}\cro{\pa{1+\frac{p(b)-p(c)}{\sqrt{w_{2}}}\sqrt{\frac{1}{p'_{r}(c)}-\frac{1}{p'_{l}(b)}}}^{1/D}-1}^{2}.\label{eq-approx-conv002}
\end{align}

We may choose $c\in (a,b)$ such that 
\[
p'_{l}(c)\le \Delta\quad \text{and}\quad p'_{r}(c)\ge \Delta\quad \text{with}\quad \Delta=\frac{p(b)-p(a)}{b-a}.
\]
Then 
\begin{align}
A&=(c-a)^{2}\pa{p'_{l}(c)-p'_{r}(a)}+(p(b)-p(c))^{2}\pa{\frac{1}{p'_{r}(c)}-\frac{1}{p'_{l}(b)}}\nonumber \\
&\le (b-a)^{2}\pa{\Delta-p'_{r}(a)}+(p(b)-p(a))^{2}\pa{\frac{1}{\Delta}-\frac{1}{p'_{l}(b)}}\nonumber\\
&=2 (b-a)(p(b)-p(a))\cro{1-\frac{1}{2}\pa{\frac{p'_{r}(a)}{\Delta}+\frac{\Delta}{p'_{l}(b)}}}=2LV\Gamma.\label{def-A-con}
\end{align}

The function $\overline p=\overline p_{1}\1_{[a,c)}+\overline p_{2}\1_{[c,b]}$  is a $2D$-linear interpolation of $p$ on $[a,b]$. Since by Lemma~\ref{lem_fun_cve} the function $F_{D}$ is concave and increasing, we deduce from~\eref{def-A-con}, ~\eref{eq-approx-conv001} and \eref{eq-approx-conv002} that
\begin{align*}
\frac{3}{4}\norm{p-\overline p}&\le w_{1}\int_{a}^{c}\ab{p-\overline p_{1}}d\mu+w_{2}\int_{c}^{b}\ab{p-\overline p_{2}}d\mu\\
&\le w_{1}F_{D}\pa{\frac{(c-a)^{2}\pa{p'_{l}(c)-p_{r}(a)}}{w_{1}}}\\
&\quad +w_{2}F_{D}\pa{\frac{(p(b)-p(c))^{2}}{w_{2}}\pa{\frac{1}{p'_{r}(c)}-\frac{1}{p'_{l}(b)}}}\\
&\le F_{D}(A)\le F_{D}(2LV\Gamma), 
\end{align*}
which is~\eref{eq-thm-approx-conv}. 

The density $q=\overline p/\int_{a}^{b}\overline pd\mu$ is continuous convex-concave on $[a,b]$ and piecewise linear on a partition of $[a,b]$ into $2D$ intervals and it follows from Lemma~\ref{lem_l1_proj} that 
\begin{equation}\label{eq-approx-conv003}
\int_{a}^{b}\ab{p-q}d\mu\le 2\ac{1\wedge \cro{\frac{4}{3}\pa{\pa{1+\sqrt{2LV\Gamma}}^{1/D}-1}^{2}}}.
\end{equation}
The mapping $z\mapsto (4/3)(e^{z}-1)^{2}$ is not larger than 1 if and only if  $z\le z_{0}=\log(1+\sqrt{3/4})$ and for all $z\in [0,z_{0}]$, $e^{z}-1\le (e^{z_{0}}-1)z/z_{0}$. Consequently, for all $z\ge 0$
\begin{align*}
2\ac{1\wedge \cro{\frac{4}{3}\pa{e^{z}-1}^{2}}}&= \frac{8}{3}\pa{e^{z_{0}\wedge z}-1}^{2}\le \frac{8}{3}\pa{\frac{e^{z_{0}}-1}{z_{0}}z}^{2}\le 5.14z^{2}.
\end{align*}
Applying this inequality with $z=D^{-1}\log\pa{1+\sqrt{2LV\Gamma}}$, we deduce~\eref{thm-approx-conv-01} from ~\eref{eq-approx-conv003}. 

Theorem~\ref{thm-approx-conv} is therefore proven for a nondecreasing, continuous convex density $p$. In order to prove the result in the other cases, we use the transformations $\cT_{1}$ and $\cT_{2}$ introduced above and defined by ~\eqref{def-ctj}. These transformations are isometric with respect to the $\L_{1}$-norm and they preserve the variation of a monotone function. Note that they also preserve the linear index, i.e.\ 
for all continuous convex-concave function $f$ on $[a,b]$ and $j\in\{1,2\}$, $\Gamma(f)=\Gamma\pa{\cT_{j}(f)}$.
Applying $\cT_{1}$ to nonincreasing continuous convex densities we establish \eref{thm-approx-conv-01}  and we extend it to all monotone concave continuous densities  by applying $\cT_{2}$.
\end{proof}

\begin{proof}[Proof of Proposition \ref{prop-approx-con1}]
Let us first consider a function $g$ that is monotone, continuous and convex on $[a,b]$ and that satisfies $g(a)=g_{r}'(a)=0$ and $0<\int_{a}^{b}gd\mu\le 1$. Then $g$ is nonnegative and nondecreasing on $[a,b]$. Since $\int_{a}^{b}gd\mu>0$, $g$ is not identically equal to 0 on $[a,b]$ and consequently, 
\[
g_{l}'(b)\ge \frac{g(b)-g(a)}{b-a}>0.
\]

Let $q> 1$ and 
\[
x_{0}=a\quad \text{and} \quad x_{i}=x_{i-1}+L\frac{q^{-i}}{\sum_{j=1}^{D}q^{-j}}\quad \text{for all $i\in\{1,\ldots,D\}$},
\]
so that $x_{D}=b$ and $\Delta_{i+1}=x_{i+1}-x_{i}=q^{-1}(x_{i}-x_{i-1})=q^{-1}\Delta_{i}$ for all $i\in\{1,\ldots,D-1\}$ and 
\[
\Delta_{D}=L\frac{q^{-D}(1-q^{-1})}{q^{-1}-q^{-(D+1)}}=L\frac{q-1}{q^{D}-1}.
\]

Let $g'$ be any nondecreasing function on $[a,b]$ satisfying $0=g_{r}'(a)=g'(a)$, $g_{l}'(b)=g'(b)$ and $g_{l}'(x)\le g'(x)\le g_{r}'(x)$ for all $x\in (a,b)$. Since $g$ is convex, we may write 
\begin{equation}\label{eq-approx-convexe2}
g(x)\ge g(x_{i})+g'(x_{i})(x-x_{i})\quad \text{for all $i\in\{1,\ldots,D\}$ and $x\in [a,b]$}.
\end{equation}
In particular,  for all $i\in\{1,\ldots,D-1\}$
\begin{align*}
\int_{x_{i}}^{x_{i}+1}\pa{g(x_{i})+g'(x_{i})(x-x_{i})}d\mu(x)&=\cro{g(x_{i})+\frac{g'(x_{i})\Delta_{i+1}}{2}}\Delta_{i+1}\le \int_{x_{i}}^{x_{i}+1}gd\mu, 
\end{align*}
hence,  
\begin{equation}\label{eq-approx-convexe3}
g'(x_{i})\Delta_{i+1}^{2}\le 2\int_{x_{i}}^{x_{i}+1}(g-g(x_{i}))d\mu.
\end{equation}
Moreover, applying~\eref{eq-approx-convexe2} with $x_{i}=x_{D}=b$ and using the facts that $g$ is nonnegative and nonincreasing, we get
\begin{align*}
1\ge \int_{a}^{b}gd\mu&\ge \int_{a}^{b}\pa{g(b)+g'(b)(x-b)}_{+}d\mu(x)=\int_{b-g(b)/g'(b)}^{b}\pa{g(b)+g'(b)(x-b)}d\mu(x)=\frac{g^{2}(b)}{2g'(b)},
\end{align*}
consequently
\begin{equation}\label{eq-approx-convexe4}
g(b)\le \sqrt{2g'(b)}.
\end{equation}

Let $g_{i}$ be the restriction of $g$ on the interval $[x_{i-1},x_{i}]$  and $\overline g$ the function on $[a,b]$ that coincides on $[x_{i-1},x_{i}]$ with $\ell_{g_{i}}$ defined by \eref{def-lf} with $f=g_{i}$  for all $i\in\{1,\ldots,D\}$. The function $\overline g$ is a $D$-linear interpolation of $g$ on $[a,b]$ that satisfies 
\begin{align*}
\int_{a}^{b}\ab{g-\overline g}d\mu&=\sum_{i=1}^{D}\int_{x_{i-1}}^{x_{i}}\pa{\overline g-g}d\mu\le \frac{1}{8}\sum_{i=1}^{D} \Delta_{i}^{2}\pa{g_{l}'(x_{i})-g_{r}'(x_{i-1})}\le \frac{1}{8}\sum_{i=1}^{D} \Delta_{i}^{2}\pa{g'(x_{i})-g'(x_{i-1})}\\
&=\frac{1}{8}\cro{-\Delta_{1}^{2}g'(a)+\Delta_{D}^{2}g'(b)+\sum_{i=1}^{D-1}g'(x_{i})\pa{\Delta_{i}^{2}-\Delta_{i+1}^{2}}}.
\end{align*}
Since $g'(a)=0$ and $\Delta_{i}=q\Delta_{i+1}$ for all $i\in\{1,\ldots,D-1\}$ we deduce that 
\begin{equation}\label{eq-approx-convexe5}
\int_{a}^{b}\ab{g-\overline g}d\mu\le \frac{1}{8}\cro{\Delta_{D}^{2}g'(b)+(q^{2}-1)\sum_{i=1}^{D-1}g'(x_{i})\Delta_{i+1}^{2}}.
\end{equation}
Using \eref{eq-approx-convexe3}, \eref{eq-approx-convexe4} and the fact that $g$ is nondecreasing, 
\begin{align*}
\frac{1}{2}\sum_{i=1}^{D-1}g'(x_{i})\Delta_{i+1}^{2}&\le \sum_{i=1}^{D-1}\int_{x_{i}}^{x_{i+1}}(g-g(x_{i}))d\mu\\
&=\int_{x_{1}}^{x_{D}}gd\mu-\sum_{i=1}^{D-1}\Delta_{i+1}g(x_{i})=\int_{x_{1}}^{x_{D}}gd\mu-q^{-1}\sum_{i=1}^{D-1}\Delta_{i}g(x_{i})\\
&\le \int_{x_{1}}^{x_{D}}gd\mu-q^{-1}\sum_{i=1}^{D-1}\int_{x_{i-1}}^{x_{i}}gd\mu= \int_{x_{1}}^{x_{D}}gd\mu-q^{-1}\int_{x_{0}}^{x_{D-1}}gd\mu\\
&=(1-q^{-1})\int_{x_{1}}^{x_{D}}gd\mu+q^{-1}\int_{x_{D-1}}^{x_{D}}gd\mu-q^{-1}\int_{x_{0}}^{x_{1}}gd\mu\\
&\le (1-q^{-1})\times 1+q^{-1}\Delta_{D}g(b)\le  1-q^{-1}+q^{-1}\sqrt{2\Delta_{D}^{2}g'(b)}.
\end{align*}
It follows from \eref{eq-approx-convexe5} that 
\begin{align*}
\int_{a}^{b}\ab{g-\overline g}d\mu&\le \frac{1}{8}\cro{\Delta_{D}^{2}g'(b)+2(q^{2}-1)\pa{ 1-q^{-1}+q^{-1}\sqrt{2\Delta_{D}^{2}g'(b)}}}\\
&\le \frac{(q-1)^{2}}{8}\cro{\frac{L^{2}g'(b)}{(q^{D}-1)^{2}}+2\pa{1+\frac{1}{q}}\pa{1+\frac{L\sqrt{2g'(b)}}{q^{D}-1}}}.
\end{align*}
Finally, choosing $q$ such that
\[
q^{D}-1=L\sqrt{g'(b)}\quad \text{i.e.}\quad q=\pa{1+L\sqrt{g'(b)}}^{1/D}> 1,
\]
we get that
\begin{align}
\int_{a}^{b}\ab{g-\overline g}d\mu&\le \frac{1+4(1+\sqrt{2})}{8}\cro{\pa{1+L\sqrt{g'(b)}}^{1/D}-1}^{2}\nonumber\\
&\le \frac{4}{3}\cro{\pa{1+L\sqrt{g'(b)}}^{1/D}-1}^{2}.\label{eq-approx-convexe6}
\end{align}

Let us now prove Proposition~\ref{prop-approx-con1} in the case where $p$ is convex and nondecreasing. Then $p'_{r}(a)\ge 0$ and we may set $\ell:x\mapsto p(a)+p_{r}'(a)(x-a)$ and $g: x\mapsto p(x)-\ell(x)$. The function $g$ is nonnegative, nondecreasing, continuous and convex on $[a,b]$ and it satisfies $g(a)=g'_{r}(a)=0$. If $\int_{a}^{b}gd\mu=0$, then $p=\ell$ and we may choose $\overline p=p$. Otherwise $0<\int_{a}^{b}gd\mu\le 1$, since $\ell$ is nonnegative on $[a,b]$, and we may apply our previous result to $g$. This leads to a $D$-linear interpolation $\overline g$ of $g$ from which we may define $\overline p=\overline g-\ell(x)$ which is a $D$-linear interpolation of $p$ on $[a,b]$. Inequality~\eref{eq-approx-monoconv01} follows from~\eref{eq-approx-convexe6} and the facts that $\ab{p-\overline p}=\ab{g-\overline g}$ and $g'_{l}(b)=p'_{l}(b)-p'_{r}(a)$. \\

In order to prove Proposition~\ref{prop-approx-con1} in the other cases, we use transformations defined by \eqref{def-ctj}. We can note that if $\ell$ is a $D$-linear interpolation of $f$, $\cT_{j}(\ell)$ is $D$-linear interpolation of $\cT_{j}(f)$ (based on the same subdivision).

If $p$ is convex and nonincreasing, we apply the transformation $\cT_{1}$. Then $g=\cT_{1}(p)$ is a convex, nondecreasing sub-density on $[a,b]$ and our previous result applies. We may find a $D$-linear interpolation $\overline g$ on $[a,b]$ such that 
\begin{equation}\label{eq-approx-convexe7}
\int_{a}^{b}\ab{g-\overline g}d\mu\le \frac{4}{3}\cro{\pa{1+L\sqrt{\ab{g_{l}'(b)-g_{r}'(a)}}}^{1/D}-1}^{2},
\end{equation}
and the function $\overline p=\cT_{1}(\overline g)$ is a $D$-linear interpolation of $p=\cT_{1}(g)$ that satisfies~\eref{eq-approx-monoconv01}. The result is therefore proven for all convex continuous monotone sub-density on $[a,b]$. 

If $p$ is a concave continuous sub-density, we apply the transformation $\cT_{2}$ with $m=2S/(b-a)$ and $S=\int_{a}^{b}pd\mu\le 1$. Since $p$ is concave and monotone, $(p(a)\vee p(b))(b-a)/2\le S$, hence $p(x)\le m$ for all $x\in [a,b]$, and $g=\cT_{2}(p)=m-p$ is a monotone convex sub-density since 
\[
\int_{a}^{b}\pa{m-p}d\mu=\frac{2S}{b-a}\times (b-a)-S=S\le 1.
\]
Applying the previous result, we may find a $D$-interpolation $\overline g$ of $g$ that satisfies~\eref{eq-approx-convexe7} and $\overline p=\cT_{2}(\overline g)$ satisfies~\eref{eq-approx-monoconv01}. This completes the proof of Proposition~\ref{prop-approx-con1}.
\end{proof}

\begin{proof}[Proof of Proposition \ref{prop-approx-con2}]
We start with the following lemma. 
\begin{lem} \label{lem_sym_cvx_dec}
If $f$ and $g$ are two continuous increasing or decreasing functions from on interval $I$ onto an interval $J$,  
\begin{equation} \label{eq_sym_dist_L1}
  \int_I |f - g| d\mu = \int_J  |f^{-1} - g^{-1}| d\mu.  
\end{equation}
Moreover, if $\inf I=0$ and $f$ is nonnegative, continuous and decreasing
\[
\int_{J}f^{-1}d\mu\le \int_{I}fd\mu.
\]
\end{lem}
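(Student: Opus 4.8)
The plan is to prove both assertions by Tonelli's theorem, interpreting each $\L_1$-integral as the planar Lebesgue measure of the region lying between two graphs and using that the diagonal reflection $(x,y)\mapsto(y,x)$ preserves Lebesgue measure. Throughout, ``increasing/decreasing'' is understood in the strict sense, so that $f$ and $g$ are continuous bijections from $I$ onto $J$ with continuous monotone inverses; I treat the case where $f$ and $g$ are both increasing in detail, the case where both are decreasing being analogous (and reducible to the former by composing with an orientation-reversing affine bijection of $I$, endpoints of $I$ being $\mu$-null).

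For \eqref{eq_sym_dist_L1}, I would first write, for $x\in I$, with $m(x)=f(x)\wedge g(x)$ and $M(x)=f(x)\vee g(x)$,
\[
|f(x)-g(x)|=\int_{\R}\1_{\{m(x)\le y\le M(x)\}}\,d\mu(y),
\]
and note that $m(x),M(x)\in J$ because $J$ is an interval containing $f(x)$ and $g(x)$. Applying Tonelli to this nonnegative integrand over $I\times\R$, and discarding the values $y\notin J$ (for which the slice is empty), gives
\[
\int_I|f-g|\,d\mu=\int_J\mu\big(\{x\in I:\ m(x)\le y\le M(x)\}\big)\,d\mu(y).
\]
The key step is to identify the slice for fixed $y\in J$. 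Since $f,g$ are increasing bijections, $f(x)\le y\Leftrightarrow x\le f^{-1}(y)$ and $g(x)\le y\Leftrightarrow x\le g^{-1}(y)$, whence $m(x)\le y\Leftrightarrow x\le f^{-1}(y)\vee g^{-1}(y)$ and, similarly, $M(x)\ge y\Leftrightarrow x\ge f^{-1}(y)\wedge g^{-1}(y)$. As $f^{-1}(y),g^{-1}(y)\in I$ and $I$ is an interval, the slice is exactly the interval $[\,f^{-1}(y)\wedge g^{-1}(y),\ f^{-1}(y)\vee g^{-1}(y)\,]$, of length $|f^{-1}(y)-g^{-1}(y)|$. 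Substituting this into the displayed identity yields \eqref{eq_sym_dist_L1}.

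For the second inequality I would use $\inf I=0$ together with $f^{-1}(y)\in I$ to write, for $y\in J$,
\[
f^{-1}(y)=\mu\big(\{x\in I:\ 0\le x\le f^{-1}(y)\}\big)=\mu\big(\{x\in I:\ f(x)\ge y\}\big)=\int_I\1_{\{f(x)\ge y\}}\,d\mu(x),
\]
the middle equality holding because $f$ is decreasing. Tonelli then gives $\int_J f^{-1}\,d\mu=\int_I\mu\big(\{y\in J:\ y\le f(x)\}\big)\,d\mu(x)$, and since $f\ge 0$ we have $J\subseteq[0,+\infty)$, so $\{y\in J:\ y\le f(x)\}\subseteq[0,f(x)]$ has measure at most $f(x)$; integrating over $x\in I$ yields $\int_J f^{-1}\,d\mu\le\int_I f\,d\mu$.

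The argument is elementary measure theory; the only point requiring care --- the one I would regard as the main obstacle, though a mild one --- is the slice identification in the first part: keeping straight which of $\wedge,\vee$ attaches to which endpoint, and verifying that the resulting interval lies inside $I$ (which is precisely where injectivity of $f,g$ and their having the same monotonicity type are used). Beyond this, one only needs to check that half-open or unbounded $I,J$ cause no difficulty, their endpoints being $\mu$-negligible; I do not anticipate any genuine obstruction.
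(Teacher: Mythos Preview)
Your proof is correct and follows essentially the same approach as the paper. Both arguments use Fubini/Tonelli to rewrite $\int_I|f-g|\,d\mu$ as the planar measure of the region between the two graphs and then re-slice horizontally; the paper expresses the indicator identity as $\1_{g(t)\le y\le f(t)}+\1_{f(t)\le y\le g(t)}=\1_{f^{-1}(y)\le t\le g^{-1}(y)}+\1_{g^{-1}(y)\le t\le f^{-1}(y)}$, which is exactly your $\1_{\{m(x)\le y\le M(x)\}}=\1_{\{f^{-1}(y)\wedge g^{-1}(y)\le x\le f^{-1}(y)\vee g^{-1}(y)\}}$ in different notation, and your treatment of the second inequality is virtually identical to the paper's.
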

\begin{proof}
Let us first assume that $f,g$ are both increasing. For all $(t,y)\in I\times J$
\[
\1_{g(t)\le y\le f(t)}+\1_{f(t)\le y\le g(t)}=\1_{f^{-1}(y)\le t\le g^{-1}(y)}+\1_{g^{-1}(y)\le t\le f^{-1}(y)}.
\]
By integrating this equality on $I\times J$ and using Fubini's theorem, we obtain that 
\begin{align*}
\int_{I}\ab{f(t)-g(t)}d\mu(t)&=\int_{I}\cro{\int_{J}\cro{\1_{g(t)\le y\le f(t)}+\1_{f(t)\le y\le g(t)}}d\mu(y)}d\mu(t)\\
&=\int_{J}\cro{\int_{I}\cro{\1_{f^{-1}(y)\le t\le g^{-1}(y)}+\1_{g^{-1}(y)\le t\le f^{-1}(y)}}d\mu(t)}d\mu(y)\\
&=\int_{J}\ab{f^{-1}(y)-g^{-1}(y)}d\mu(y).
\end{align*}
When $f$ and $g$ are both decreasing, we may apply the above equality to $\overline f=f(-\cdot)$ and $\overline g=g(-\cdot)$ from $K=\{-x,\; x\in I\}$ to $J$ then $\overline f^{-1}=-f^{-1},\overline g^{-1}=-g^{-1}$ and the result follows from the facts 
\[
\int_{I}\ab{f-g}d\mu=\int_{K}\ab{\overline f-\overline g}d\mu \text{ and } \int_{J}\ab{\overline f^{-1}-\overline g^{-1}}d\mu=\int_{J}\ab{f^{-1}-g^{-1}}d\mu.
\]

Let us now turn to the  proof of the second part of the lemma. Since $\inf I=0$ and $f$ is decreasing and continuous from $I$ onto $J$, for all $t\in J$
\[
\mu\pa{\ac{x\in I,\; f(x)\ge t}}=\mu\pa{\ac{x\in I,\; x\le f^{-1}(t)}}=f^{-1}(t).
\]
Besides, $f$ being nonnegative, the interval $J\subset \R_{+}$ and it follows from Fubini's theorem that
\begin{align*}
\int_{J}f^{-1}(t)d\mu(t)&=\int_{J}\mu\pa{\ac{x\in I,\; f(x)\ge t}}d\mu(t)\\
&=\int_{J}\cro{\int_{I}\1_{t\le f(x)}d\mu(x)}d\mu(t)=\int_{I}\cro{\int_{J}\1_{t\le f(x)}d\mu(t)}d\mu(x)\\
&\le \int_{I}\cro{\int_{0}^{+\infty}\1_{t\le f(x)}d\mu(t)}d\mu(x)=\int_{I}
f(x)d\mu(x).
\end{align*}
\end{proof}

Let us now turn to the proof of~\eref{eq-cor-monoconv01}. We first claim that it is sufficient to establish ~\eref{eq-cor-monoconv01} when $p$ is decreasing. If $p$ were increasing, we could apply the result to $q=\cT_{1}(p)$ defined by~\eref{def-ctj}, which is then decreasing, and find a $D$-linear interpolation of $q$ on $[a,b]$ that satisfies 
\[
\int_{a}^{b}\ab{q-\overline q}d\mu\le \frac{4}{3}\cro{\pa{1+V\sqrt{\ab{\frac{1}{q'_{r}(a)}-\frac{1}{q'_{l}(b)}}}}^{1/D}-1}^{2}.
\]
We then conclude by using the facts that $\cT_{1}$ is an $\L_{1}$-isometry, the function $\overline p=\cT_{1}(\overline q)$ which is a $D$-linear interpolation of $p$, and $q'_{r}(a)=-p_{l}'(b)$ and  $q'_{l}(b)=-p_{r}'(a)$. 

We may therefore assume that $p$ is decreasing and change $p$ into $p(\cdot-a)$, which amounts to translating the sub-density $p$, we may also assume without loss of generality that $a=0$. By Lemma~\ref{lem_sym_cvx_dec}, $s=p^{-1}$ is then a sub-density on $[p(b),p(0)]$ which is furthermore decreasing, continuous and convex-concave. 
Since the right and left derivatives of $p$ at $0$ and $b$ respectively are not zero, $s$ admits a right derivative at $p(b)$ and a left derivative at $p(0)$ given by $1/p'_{l}(b)$ and $1/p'_{r}(0)$ (with our convention $1/(+\infty)=0$. We may therefore apply Proposition~\ref{prop-approx-con1} and find a $D$-linear interpolation $\overline s$ of $s$ on $[p(b),p(0)]$ that satisfies 
\[
\int_{p(b)}^{p(0)}\ab{s-\overline s}d\mu\le \frac{4}{3}\cro{\pa{1+V\sqrt{\ab{\frac{1}{p'_{r}(0)}-\frac{1}{p'_{l}(b)}}}}^{1/D}-1}^{2}.
\]
Since $s$ is continuous and decreasing from $[p(b),p(0)]$ onto $[0,b]$, so is $\overline s$, and we may set $\overline p=\overline s^{-1}:[0,b]\to [p(b),p(0)]$. The function $\overline p$ is a $D$-linear interpolation of $p$ on $[0,b]$ and we conclude by using the equality $\int_{p(b)}^{p(0)}\ab{s-\overline s}d\mu=\int_{0}^{b}\ab{p-\overline p}d\mu$ which is a consequence of Lemma~\ref{lem_sym_cvx_dec}.
\end{proof}

\begin{proof}[Proof of Lemma~\ref{lem_fun_cve}]
For all $u>0$, 
\[
 \phi'(u) =  \frac{F\pa{\sqrt{u}} - F(0)}{\sqrt{u}}F'(\sqrt{u})
\]
is the product of $u\mapsto [F\pa{\sqrt{u}} - F(0)]/\sqrt{u}$ and $u\mapsto F'(\sqrt{u})$ which are both nonnegative and nonincreasing since $F$ is nondecreasing and concave. The function $\phi'$ is therefore nonincreasing and $\phi$ concave. 
\end{proof}

\subsection{Proofs of Section \ref{section-s-shape}}\label{subsect-proof-sshape}

\begin{proof}[Proof of Proposition~\ref{prop-cL}]
Let $p\in \overline \cM(\cL)$, $J=\{p>0\}$ and $\overline p\in\overline \cO_{D}(\cL)$. There exists a partition $\cI$ of $\R$ into at most $D+1$ intervals such that the restriction of $\cL\overline p$ to an element $I\in\cI$ is either affine or equal to $-\infty$. We denote by $\phi$ and $\overline \phi$ the functions $\cL p$ and $\cL \overline p$ respectively which map $\R$ into $\R\cup\{-\infty\}$. Since $\cL$ is increasing 
\[
\{p>\overline p\}=\{\phi>\overline \phi\}\quad \text{and}\quad \{p<\overline p\}=\{\phi<\overline \phi\}.
\]
To prove that $\overline p$ is extremal in $\overline \cM(\cL)$ with degree not larger that $2(D+2)$, it suffices to prove that $\{\phi>\overline \phi\}$ and $\{\phi<\overline \phi\}$ are both unions of at most $D+2$ intervals and then to use Lemma~1 of Baraud and Birg\'e~\cite{MR3565484}. This is what we shall do now. 

Let us start with the set $\{\phi>\overline \phi\}$.  Either $\overline \phi$ is finite on $I\in \cI$ and $I\cap J\cap \{\phi>\overline \phi\}$ is an interval since $\phi-\overline \phi$ is a concave function on the interval $I\cap J$ when it is not empty. Otherwise, $\overline \phi$ takes the value $-\infty$ on $I$ and $I\cap J\cap \{\phi>\overline \phi\}=I\cap J$ remains an interval.  The set $I\cap J^{c}\cap \{\phi>\overline \phi\}$ is empty since $\phi$ takes the value $-\infty$ on $J^{c}$. Hence 
\[
I\cap \ac{\phi>\overline \phi}=\pa{I\cap J\cap \ac{\phi>\overline \phi}}\cup \pa{I\cap J^{c}\cap \ac{\phi>\overline \phi}}
\]
is an interval and 
\[
\ac{p>\overline p}=\ac{\phi>\overline \phi}=\bigcup_{I\in \cI}\pa{I\cap \ac{\phi>\overline \phi}}
\]
the union of at most $D+1$ intervals. 

Let us now turn to the set $\{\phi<\overline \phi\}$ and define $K=(a,b)$ as the open interval $\{\phi>-\infty\}\cap\{\overline \phi>-\infty\}$ with $a\in\R\cup\{-\infty\}$, $b\in\R\cup\{+\infty\}$, $a\le b$. If $K=\varnothing$, i.e. $a=b$, $\{\phi<\overline \phi\}=\{\overline \phi>-\infty\}=\{\overline p>0\}$
is an interval. Otherwise $K$ is not empty, $\psi=\phi-\overline \phi$ is a continuous function on $K$ which is either concave on the whole interval $K$ or only piecewise concave on each element of $I\cap K$ with $I\in\gI(A\cap K)$ when $A\cap K\ne \varnothing$. In any case, 
\[
\ac{x\in K,\; \phi(x)<\overline \phi(x)}=\ac{x\in K,\; \phi(x)-\overline \phi(x)<0}
\]
is a union of at most $|A\cap K|$ intervals plus, possibly, an additional one with endpoint $a$ and another one with endpoint $b$. If $b< +\infty$, either $\overline \phi(b)=-\infty$ or $\phi(b)=-\infty$ and the set $\{x\ge b,\; \phi(x)<\overline \phi(x)\}$ is either empty or is an interval that contains $b$. We may argue similarly to establish that $\{x\le a,\; \phi(x)<\overline \phi(x)\}$ is either empty or an interval that contains $a$. This means that the set 
\[
\ac{\phi<\overline \phi}=\pa{K\cap \ac{\phi<\overline \phi}}\cup \pa{K^{c}\cap  \ac{\phi<\overline \phi}}
\]
is a union of at most $|A\cap K|+2\le D+2$ intervals.\\
\end{proof}

\begin{proof}[Proof of Proposition~\ref{prop-astGuillaume}]
Since $\cL$ is increasing and convex on $(0,+\infty)$, it grows to $+\infty$ at $+\infty$ and it can be extended continuously at $0$. Using the same notation $\cL$ for this extension, we obtain that $\cL: [0,+\infty) \to [u,+\infty)$ for some $u\in\R$. The inverse $\cL^{-1}: [u,+\infty) \to [0,+\infty)$ is continuously increasing and concave. Since $\cL p$ is by definition  concave on $\{p > 0\}=(a,b)$, the density $p = \cL^{-1}(\cL p)$ is also concave on this interval and $(a,b)$ is necessarily bounded. Furthermore, $p$ can be continuously extended on the compact interval $[a,b]$. This extension $\widetilde p$ is also concave on $[a,b]$ and thus admits a maximum at some point $c\in [a,b]$. The function $\widetilde p$ is nondecreasing on $[a,c]$ and nonincreasing on $[c,b]$. 
Applying $(iii)$ in Proposition~\ref{prop-guer1} we know that there exist a $D$-linear interpolation $\overline p_{1}$ of $\widetilde p$ on $[a,c]$ and a $D$-linear interpolation $\overline p_{2}$ of $\widetilde p$ on $[c,b]$ such that 
\begin{align*}
\int_{a}^{c}\ab{\widetilde p-\overline p_{1}}d\mu\le \frac{(c-a)(\widetilde p(c)-\widetilde p(a))}{2 D^{2}}\quad \text{and}\quad \int_{c}^{b}\ab{\widetilde p-\overline p_{2}}d\mu\le \frac{(b-c)(\widetilde p(c)-\widetilde p(b))}{2 D^{2}}.
\end{align*}
Since $\overline p_{1}(c)=\overline p_{2}(c)=\widetilde p(c)$, the function $\overline p=\overline p_{1}\1_{[a,c]}+\overline p_{2}\1_{(c,b]}$ is a $(2D)$-linear interpolation of $\widetilde p$ on $[a,b]$ which satisfies 
\begin{align*}
\int_{a}^{b}\ab{p-\overline p}d\mu=\int_{a}^{b}\ab{\widetilde p-\overline p}d\mu\le \frac{1}{D^{2}}\cro{\frac{(c-a)(\widetilde p(c)-\widetilde p(a))}{2}+\frac{(b-c)(\widetilde p(c)-\widetilde p(b))}{2}}.
\end{align*}
The quantities $[(c-a)(\widetilde p(c)-\widetilde p(a))]/2$ and $[(b-c)(\widetilde p(c)-\widetilde p(b))]/2$ are the areas of two disjoint triangles that lie under the graph of the density $\widetilde p$ on $[a,b]$ and their sum is therefore not larger than 1. This leads to the second inequality in \eref{eq-astGuillaume1}. Note that since $p$ is positive on $(a,b)$, so is its $(2D)$-linear interpolation $\overline p$: $\overline p(x)>0$ for all $x\in (a,b)$. 

Since $\cL$ extends continuously at $0$, the function $\cL\widetilde{p}$ is well-defined on $[a,b]$ and corresponds to the continuous extension of $\cL p$ on this interval. In particular $\cL \widetilde{p}$ is concave on the compact interval $[a,b]$. 
Let $a=x_{0}<\ldots<x_{2D}=b$ be the subdivision associated with the $2D$-linear interpolation $\overline p$ of $\widetilde p$ on $[a,b]$. The function $\overline q$ defined by 
\begin{align*}
\overline q(x)=\sum_{i=1}^{2D}\pa{\frac{x-x_{i-1}}{x_{i}-x_{i-1}}\cL\widetilde p(x_{i})+\frac{x_{i}-x}{x_{i}-x_{i-1}}\cL\widetilde p(x_{i-1})}\1_{[x_{i-1},x_{i})}(x)\quad \text{for all $x\in [a,b)$}
\end{align*}
and $\overline q(b)=\cL\widetilde p(b)$, is a $2D$-linear interpolation of $\cL\widetilde p$ based on the same subdivision as $\overline p$. In particular, since $\cL\widetilde p$ is concave, $\overline q$ is also concave and it satisfies  $\overline q\le \cL\widetilde p$ on $[a,b]$. Since $\cL^{-1}$ is increasing and concave, the function $\check{p}=\cL^{-1}\circ \overline q$ satisfies $0\le \check{p}\le  \cL^{-1}(\cL\widetilde p)=\widetilde p$ on $[a,b]$ and for all $x\in [x_{i-1},x_{i})$ with $i\in\{1,\ldots,2D\}$
\begin{align*}
\check{p}(x)=\cL^{-1}(\overline q(x))&= \cL^{-1}\pa{\frac{x-x_{i-1}}{x_{i}-x_{i-1}}\cL\widetilde p(x_{i})+\frac{x_{i}-x}{x_{i}-x_{i-1}}\cL\widetilde p(x_{i-1})}\\
&\ge \frac{x-x_{i-1}}{x_{i}-x_{i-1}}\cL^{-1}(\cL\widetilde p(x_{i}))+\frac{x_{i}-x}{x_{i}-x_{i-1}}\cL^{-1}(\cL\widetilde p(x_{i-1}))\\
&=\frac{x-x_{i-1}}{x_{i}-x_{i-1}}\widetilde p(x_{i})+\frac{x_{i}-x}{x_{i}-x_{i-1}}\widetilde p(x_{i-1})=\overline p(x)
\end{align*}
and $\check{p}(b)=\cL^{-1}(\cL\widetilde p(b))=\widetilde p(b)=\overline p(b)$. We deduce that $0<\overline p\le \check{p}\le \widetilde p$ on $(a,b)$ and by using the second inequality in~\eref{eq-astGuillaume1} we obtain that
\begin{equation}\label{eq-astGuillaume}
\int_{a}^{b}\ab{p-\check{p}}d\mu=\int_{a}^{b}\pa{\widetilde p-\check{p}}d\mu\le \int_{a}^{b}\pa{\widetilde p-\overline p}d\mu=\int_{a}^{b}\ab{p-\overline p}d\mu\le \frac{1}{D^{2}}.
\end{equation}
In a nutshell, the function $\check{p}$ possesses the following properties: for all $x\in (a,b)$, $0<\check{p}(x)\le p(x)$, hence $\kappa=\int_{a}^{b}\check{p}d\mu\in (0,1]$, and  
$\cL\circ \check{p}=\overline q$ is concave and piecewise affine on each element of a partition of $[a,b]$ into $2D$ intervals.  
The function $\check{p}\1_{(a,b)}$ satisfies thus $\{\check{p}\1_{(a,b)}>0\}=(a,b)$ as well as all the requirements for belonging to $\overline \cO_{2D+1}(\cL)$, except from the fact that $\check{p}\1_{(a,b)}$ might not be a density. Let us define
\[
s(x)=(\check{p}\1_{(a,b)})(\kappa x+(1-\kappa) c)\quad \text{for $x\in\R$.}
\]
Then $s$ is a density, $\{s>0\}$ is the open interval $(c+\kappa^{-1}(a-c), c+\kappa^{-1}(b-c))$ and for all $x\in \{s>0\}$ the function 
\[
\cL s(x)=(\cL\circ \check{p})(\kappa x+(1-\kappa) c)=\overline q\pa{\kappa x+(1-\kappa) c}
\]
is concave and piecewise affine on each element of a partition of $\{s>0\}$ into the $2D$-intervals. By definition, $\cL s$ takes the value $-\infty$ on $\{s=0\}$ which is a union of two intervals. Consequently, $s$ belongs to $\overline \cO_{2D+1}(\cL)$ and we conclude by using Lemma~\ref{lem-scale_dens}. 
\end{proof}

\begin{proof}[Proof of Theorem~\ref{thm-approx-scv}]
Since the function $\cL$ is concave and increasing on $(0,+\infty)$, it is continuous on this interval and its inverse $\cL^{-1}$ is convex, increasing and continuous on the range $(u,v)$ of $\cL$, with $-\infty\le u<v\le +\infty$. Throughout the proof, we shall repeatedly use the following equalities: for all $s,t>0$ with $s<t$
\begin{equation}\label{eq-cL}
\int_{s}^{t}z\cL_{r}'(z)dz=\int_{s}^{t}z\cL_{l}'(z)dz=\int_{\cL(s)}^{\cL(t)}\cL^{-1}(u)du.
\end{equation}
In particular, by letting $s$ decrease to 0 and $t$ increase to infinity, we respectively obtain that for all $t>0$
\begin{equation}\label{eq-cL1}
\int_{0}^{t}z\cL_{r}'(z)dz=\int_{0}^{t}z\cL_{l}'(z)dz=\int_{u}^{\cL(t)}\cL^{-1}(u)du
\end{equation}
and for all $s>0$
\begin{equation}\label{eq-cL2}
\int_{s}^{+\infty}z\cL_{r}'(z)dz=\int_{s}^{+\infty}z\cL_{l}'(z)dz=\int_{\cL(s)}^{v}\cL^{-1}(u)du.
\end{equation}

We obtain~\eref{eq-cL} by using Fubini's theorem and the fact that a concave function is absolutely continuous on any compact interval $[s,t]\subset (0,+\infty)$. More precisely,
\begin{align*}
\int_{\cL(s)}^{\cL(t)}\cL^{-1}(u)du&=\int_{\cL(s)}^{\cL(t)}\cro{\int_{0}^{+\infty}\1_{x\le \cL^{-1}(u)}dx}du
=\int_{\cL(s)}^{\cL(t)}\cro{\int_{0}^{+\infty}\1_{\cL(x)\le u}dx}du\\
&=\int_{0}^{t}\cro{\int_{\cL(s)}^{\cL(t)}\1_{\cL(x)\le u}du}dx=\int_{0}^{t}\pa{\cL(t)-\cL(u)\vee \cL(s)}du\\
&=\int_{0}^{t}\pa{\cL(t)-\cL(u\vee s)}du=\int_{0}^{t}\pa{\int_{s\vee u}^{t}\cL_{l}'(z)dz}du=\int_{0}^{t}\pa{\int_{s\vee u}^{t}\cL_{r}'(z)dz}du\\
&=\int_{s}^{t}\cL_{r}'(z)\pa{\int_{0}^{z}du}dz=\int_{s}^{t}z\cL_{r}'(z)dz=\int_{s}^{t}z\cL_{l}'(z)dz.
\end{align*}

 Let $a,b\in\R\cup\{\pm \infty\}$, $a<b$, such that $\{p>0\}=(a,b)$. Since $p$ is $\cL$-concave, $\cL p$ is concave, hence continuous on $(a,b)$ and there exists $c\in \R\cup\{\pm \infty\}$, $a\le c\le b$, such that $\cL p$ is nondecreasing on $(a,c)$ and nonincreasing on $(c,b)$ (with the convention that a function is always monotone on the empty set). In particular, if $c$ belongs to $(a,b)$,  $\cL p$ is continuously nondecreasing on $(a,c]$, continuously nonincreasing on $[c,b)$ and the same holds for $p=\cL^{-1}\circ \cL p$ since $\cL^{-1}$ is increasing. Let us now consider the case where $c\in\{a,b\}$. Our aim is to show that $c$ is necessarily a finite number and that $p$ can be continuously extended at $c$. We may then conclude that in any case, $\cL p$ and $p$ are both continuous and monotone on the (possibly empty) intervals $(a,c]$ and $[c,b)$ for some real number $c\in [a,b]$.

Let us start with the case $c=a$. Then $\cL p$ and $p$ are both continuously nonincreasing on $(c,b)$ and since $p$ is a density, $c$ is necessarily finite. Let us assume that $p(x)$ tends to $+\infty$ when $x$ tends to $c$ and get a contradiction. Since  $\cL p$ is concave and nonincreasing on $(c,b)$, it has a finite limit at $c$ and since under our assumption $p$ grows to infinity at $c$,  
\[
v=\lim_{y\uparrow +\infty}\cL(y)=\lim_{x\downarrow c}\cL p(x)<+\infty.
\]
We may therefore extend $\cL p$ continuously at $x=c$ by setting $\cL p(c)=v\in\R$. Let $x_{0}\in (c,b)$ and set $s=p(x_{0})>0$. It follows from the monotonicity of $\cL$ that $\cL p(x_{0})=\cL(s)<\cL p(c)=v$. Besides, $\cL p$ lies above its chord on the interval $[c,x_{0}]$ since it is concave and we obtain that
\[
v>\cL p(x)\ge \theta(x-c)+v>\cL p(x_{0})>u \quad \text{for all $x\in (c,x_{0})$ with}\quad \theta=\frac{\cL(s)-v}{x_{0}-c}<0.
\]
Consequently, using that $\cL^{-1}$ is increasing, we get
\begin{align*}
1\ge \int_{c}^{x_{0}}p(x)dx&=\int_{c}^{x_{0}}\cL^{-1}\pa{\cL p(x)}dx\ge \int_{c}^{x_{0}}\cL^{-1}\pa{\theta(x-c)+v}dx\ge \frac{1}{|\theta|}\int_{\cL(s)}^{v}\cL^{-1}(u)du.
\end{align*}
In particular, $\cL^{-1}$ is integrable on $[\cL(s),v)$. This is in fact impossible under our assumption~\eref{eq-condcL}. If $\cL^{-1}$ were integrable, we would deduce from \eref{eq-cL2} and the fact that since $\cL$ is increasing  that
\begin{align*}
\int_{\cL(s)}^{v}\cL^{-1}(u)du&\ge \int_{\cL( \lambda s)}^{v}\cL^{-1}(u)du=\int_{\lambda s}^{+\infty}z\cL_{l}'(z)dz=\int_{s}^{+\infty}\lambda^{2}u\cL_{l}'(\lambda u)dz\\
&\ge \kappa\int_{s}^{+\infty}u\cL_{r}'(u)dz=\kappa\int_{\cL(s)}^{v}\cL^{-1}(u)du>\int_{\cL(s)}^{v}\cL^{-1}(u)du,
\end{align*}
which is the contradiction we are looking for. The case $c=b$ is obtained by applying this result to the density $x\mapsto p(-x)$ which is nonincreasing on $(-b,-a)$ and also belongs to $\overline \cM(\cL)$.

The density $p$ may therefore be continuously extended at $c$ so that it is nondecreasing on $(a,c]$ and nonincreasing on $[c,b)$. Let us now find an element $\overline p\in \overline{\cO}_{2D+3}(\cL)$ that satisfies \eref{eq-bd-approx-scv}. It is actually sufficient to restrict ourselves to the case $c=0$ for the following reasons. If $p$ belongs to $\overline \cM(\cL)$, so does the density $p_{0}:x\mapsto p(x+c)$ which is nondecreasing on $(a-c,0]$ and nonincreasing on $[0,b-c)$. If we can find $\overline p_{0}\in \overline{\cO}_{2D+3}(\cL)$ that satisfies \eref{eq-bd-approx-scv} with $p_{0}$ and $\overline p_{0}$ in place of $p$ and $\overline p$, then by the invariance property of the Lebesgue measure under translation, the same inequality holds for $p$ and $\overline p:x\mapsto p_{0}(x-c)$. It remains to notice that the so-defined function $\overline p$ is also an element of $\overline{\cO}_{2D+3}(\cL)$. From now on, we shall therefore assume that $c=0$. 

Let us assume that $b\ne 0$. We set $b_{1}=b$, $y_{j}=\lambda^{-j}p(0)$ for $j\in\N$, $x_{0}=0$ and for $j\in\N\setminus\{0\}$, $x_{j}$ a positive solution of the equation $p(x)=y_{j}$ when such a solution exists and $x_{j}=b_{1}>0$ otherwise. Since the density $p$ is continuous and monotone on $[0,b_{1})$, a positive solution $x_{j}$ always exists for each $j\ge 1$ when $b_{1}=+\infty$. The situation $x_{j}=b_{1}$ for some $j\ge 1$ can therefore only occur when $b_{1}$ is finite. In this case, we extend $p$ continuously at $b_{1}$ so that $p$ is well-defined and continuous on $[0,b_{1}]$. Note that in any case, $p(x_{j})\ge y_{j}>0$ for all $j\in\N$. 

In what follows, our aim is to find a nonnegative function $\overline q_{1}$ on $[0,+\infty)$ with the following properties. There exists some $J\ge 1$ such that for all $x\in [0,x_{J}]$, $0<\overline q_{1}(x)\le p(x)$, $\overline q_{1}(x)=0$ for all $x\not \in [0,x_{J}]$, $\cL\circ \overline q_{1}$ is a $(D+1)$-linear interpolation of $\cL p$ on $[0,x_{J}]$ and 
\begin{equation}\label{eq-cL-Approx}
\int_{0}^{b_{1}}\ab{p-\overline q_{1}}d\mu\le \frac{\gamma\zeta_{1}}{D^{2}}\quad \text{with}\quad \zeta_{1}=\int_{0}^{x_{1}}p(u)du>0.
\end{equation}

In what follows, we set 
\[
D_{1}=\PES{\frac{D\lambda \sqrt{(\lambda-1)}}{\sqrt{2\gamma \kappa}}}\ge \pa{\frac{D\lambda \sqrt{(\lambda-1)\zeta_{1}}}{\sqrt{2\gamma \kappa}}}\vee 1.
\]
Since $\cL p$ is nonincreasing and concave on $[0,b_{1})$, it admits a right and left derivative at all point $x\in (0,b_{1})$ as well as a right derivative $(\cL p)_{r}'(0)\in [0,+\infty)$ at 0.  If $b_{1}$ is finite, the limit $\lim_{x\uparrow b_{1}}(\cL p)'_{l}(x)\in [-\infty,0]$ exists and we shall denote it $(\cL p)'_{l}(b_{1})$. Throughout the proof, we consider a function $\phi$ such that for all $x\in (0,b_{1})$, $(\cL p)'_{r}(x)\le \phi(x)\le (\cL p)'_{l}(x)\le 0$, hence $|(\cL p)'_{l}(x)|\le |\phi(x)|\le |(\cL p)'_{r}(x)|$, and $\phi(b_{1})=(\cL p)'_{l}(b_{1})\in [-\infty,0]$ when $b_{1}<+\infty$. Since $\cL p$ is concave
\[
0\ge (\cL p)_{r}'(0)\ge \Delta_{1}=\frac{\cL p(x_{1})-\cL p(0)}{x_{1}}\ge (\cL p)_{l}'(x_{1})\ge \phi(x_{1}).
\]
If $|(\cL p)_{l}'(x_{1})|$ is finite, which is always the case when $x_{1}\ne b_{1}$, by using the convention $0/0=0$ and applying Proposition~\ref{prop-guer1}-(i) to $\cL p$, we may find a $D_{1}$-linear interpolation $\ell_{1}$ of $\cL p$ on the interval $[0,x_{1}]$ such that
\begin{align*}
\int_{0}^{x_{1}}\ab{\cL p-\ell_{1}}d\mu&\le  \frac{x_{1}^{2}}{2 D_{1}^{2}}\frac{\pa{(\cL p)_{r}'(0)-\Delta_{1}}\pa{\Delta_{1}-(\cL p)_{l}'(x_{1})}}{(\cL p)_{r}'(0)-(\cL p)_{l}'(x_{1})}\\
&= \frac{x_{1}^{2}}{2 D_{1}^{2}}\cro{1-\frac{\Delta_{1}-(\cL p)_{l}'(x_{1})}{(\cL p)_{r}'(0)-(\cL p)_{l}'(x_{1})}}\pa{\Delta_{1}-(\cL p)_{l}'(x_{1})}\\
&\le \frac{x_{1}^{2}|\Delta_{1}|}{2D_{1}^{2}}\pa{1-\frac{|\Delta_{1}|}{\ab{(\cL p)_{l}'(x_{1})}}}\le \frac{x_{1}\pa{\cL p(0)-\cL p(x_{1})}}{2 D_{1}^{2}}\pa{1-\frac{|\Delta_{1}|}{\ab{\phi(x_{1})}}}.
\end{align*}
It follows from Proposition~\ref{prop-guer1}-(iii) that the last inequality remains actually true when $x_{1}=b_{1}<+\infty$ and $\phi(b_{1})=(\cL p)_{l}'(b_{1})=-\infty$ with the convention $1/(+\infty)=0$. Since $\cL$ is concave on $(0,+\infty)$, it is Lispchitz on $[y_{1},y_{0}]\subset (0,+\infty)$ with Lipschitz constant not larger than $\cL_{r}'(y_{1})$. Using that $p(0)=y_{0}$ and $p(x_{1})\in [y_{1},y_{0}]$, we deduce  
\begin{align}
\int_{0}^{x_{1}}\ab{\cL p-\ell_{1}}d\mu&\le \frac{x_{1}\cL_{r}'(y_{1})(y_{0}-y_{1})}{2 D_{1}^{2}}\pa{1-\frac{|\Delta_{1}|}{\ab{\phi(x_{1})}}}=\frac{(\lambda-1)x_{1}\cL_{r}'(y_{1})y_{1}}{2 D_{1}^{2}}\pa{1-\frac{|\Delta_{1}|}{\ab{\phi(x_{1})}}}.\label{eq-pfL-01}
\end{align}
Since $\cL p$ is concave, nonincreasing and $p(x_{1})\ge y_{1}>0$, 
\[
u<\cL (y_{1})\le \cL p(x_{1})\le \ell_{1}(x)\le \cL p(x)\le \cL p(0)=\cL(y_{0})<v\quad \text{for all $x\in [0,x_{1}]$.}
\]
Since $\ell_{1}(x)\in (u,v)$ for $x\in  [0,x_{1}]$, we may define the function $q_{1}=\cL^{-1}\circ \ell_{1}$ on $[0,x_{1}]$. Since $\cL^{-1}$ is increasing, the function $q_{1}$ satisfies  $0<q_{1}(x)\le p(x)$ for all $x\in [0,x_{1}]$. Furthermore, since $\cL^{-1}$ is convex on $(u,v)$, it is Lipschitz on $[\cL(y_{1}),\cL(y_{0})]$ with a Lipschitz constant not larger than $(\cL^{-1})_{l}'(\cL(y_{0}))=(\cL_{l}'(y_{0}))^{-1}=(\cL_{l}'(\lambda y_{1}))^{-1}$. Using~\eref{eq-condcL} and \eref{eq-pfL-01}, we get
\begin{align*}
\int_{0}^{x_{1}}\ab{p(x)-q_{1}(x)}dx&=\int_{0}^{x_{1}}\ab{\cL^{-1}(\cL p(x))-\cL^{-1}(\ell_{1}(x))}dx\le \frac{1}{\cL _{l}'(\lambda y_{1})}\int_{0}^{x_{1}}\ab{\cL p(x)-\ell_{1}(x)}dx\\
&\le \frac{(\lambda-1)x_{1}\cL_{r}'(y_{1})y_{1}}{2 \cL _{l}'(\lambda y_{1}) D_{1}^{2}}\pa{1-\frac{|\Delta_{1}|}{\ab{\phi(x_{1})}}}\le \frac{\lambda^{2}(\lambda-1)x_{1}y_{1}}{2\kappa  D_{1}^{2}}\pa{1-\frac{|\Delta_{1}|}{\ab{\phi(x_{1})}}}.
\end{align*}
Since $p(x)\ge y_{1}$ for all $x\in [0,x_{1}]$, $x_{1}y_{1}\le \int_{0}^{x_{1}}p(u)du= \zeta_{1}\le 1$ and we obtain that for our choice of $D_{1}$ 
\begin{align}
\int_{0}^{x_{1}}\ab{p-q_{1}}d\mu&\le \frac{\lambda^{2}(\lambda-1)\zeta_{1}}{2\kappa  D_{1}^{2}}\pa{1-\frac{|\Delta_{1}|}{\ab{\phi(x_{1})}}}\le \frac{\gamma\zeta_{1}}{D^{2}}\pa{1-\frac{|\Delta_{1}|}{\ab{\phi(x_{1})}}}\le \frac{\gamma\zeta_{1}}{D^{2}}.\label{eq-cL-B1}
\end{align}
If $x_{1}=b_{1}$ the function $\overline q_{1}=q_{1}\1_{[0,b_{1}]}$ satisfies our requirements: inequality \eref{eq-cL-Approx} follows from~\eref{eq-cL-B1} and it follows from our definition \eref{eq-cL-defR} of $\gamma$ that $\cL \circ \overline q_{1}$ is a $D_{1}$-linear interpolation on $[0,x_{1}]$ with 
\[
D_{1}\le \frac{D\lambda \sqrt{(\lambda-1)}}{\sqrt{2\kappa \gamma}}+1\le D+1.
\]

Otherwise $x_{1}\ne b_{1}$, $p(x_{1})=y_{1}$ and 
\[
\Delta_{1}=\frac{\cL p(x_{1})-\cL p(0)}{x_{1}}=\frac{\cL(y_{1})-\cL(y_{0})}{x_{1}}<0.
\]
In such a situation, we set for $j\ge 1$ 
\[
v_{j}=\sqrt{(y_{j-1}-y_{j})\cro{\cL (y_{j-1})-\cL(y_{j})}},\quad w_{j}=\int_{y_{j}}^{y_{j-1}}z\cL'_{l}(z)dz
\]
for $j\ge 2$, 
\[
D_{j}=\PES{\frac{\lambda v_{j} D}{2 \sqrt{2\zeta_{1}\kappa \gamma |\Delta_{1}|}}}\ge \pa{\frac{\lambda v_{j} D}{2 \sqrt{2\zeta_{1}\kappa \gamma |\Delta_{1}|}}}\vee 1,
\]
and 
\[
J\et=\PES{\frac{2}{\log \kappa}\pa{\frac{\kappa}{\kappa-1}}^{1/2}\frac{D}{\sqrt{\gamma}}}\le \pa{\frac{2}{\log \kappa}\pa{\frac{\kappa}{\kappa-1}}^{1/2}\frac{D}{\sqrt{\gamma}}}+ 1.
\]
Since $\cL$ is concave, we deduce from \eref{eq-condcL} that for all $j\ge 1$, 
\begin{align*}
v_{j+1}^{2}&=(y_{j}-y_{j+1})\cro{\cL (y_{j})-\cL(y_{j+1})}=(y_{j}-y_{j+1})\int_{y_{j+1}}^{y_{j}}\cL_{r}'(t)dt=\frac{y_{j}-y_{j+1}}{\lambda}\int_{y_{j}}^{y_{j-1}}\cL_{r}'(u/\lambda)du\\
&\le \frac{\lambda(y_{j}-y_{j+1})}{\kappa}\int_{y_{j}}^{y_{j-1}}\cL_{l}'(u)du=\frac{1}{\kappa}\pa{y_{j-1}-y_{j}}\cro{\cL (y_{j-1})-\cL(y_{j})}=\frac{v_{j}^{2}}{\kappa}.
\end{align*}
By using \eref{eq-cL}, we obtain similarly that
\begin{align*}
w_{j+1}&= \int_{y_{j+1}}^{y_{j}}z\cL'_{r}(z)dz=\frac{1}{\lambda^{2}}\int_{y_{j}}^{y_{j-1}}u\cL'_{r}(u/\lambda)du\le \frac{1}{\kappa}\int_{y_{j}}^{y_{j-1}}u\cL'_{l}(u)du=\frac{w_{j}}{\kappa}.
\end{align*}
Besides, since $\cL p$ is concave, it lies above its chord on the interval $[0,x_{1}]$ which means that
\[
\cL p(x)\ge \frac{\cL (p(x_{1}))-\cL (p(0))}{x_{1}}x+\cL (p(0))=\Delta_{1}x+\cL (y_{0})\quad \text{for all $x\in [0,x_{1}]$}
\]
and consequently,
\begin{align*}
w_{1}&=\int_{y_{1}}^{y_{0}}z\cL'_{l}(z)dz=\int_{\cL(y_{1})}^{\cL(y_{0})}\cL^{-1}(u)du=|\Delta_{1}|\int_{0}^{x_{1}}\cL^{-1}\pa{\Delta_{1}x+\cL (y_{0})}dx\\
&\le |\Delta_{1}|\int_{0}^{x_{1}}\cL^{-1}\pa{\cL p(x)}dx=|\Delta_{1}|\int_{0}^{x_{1}}p(x)dx= |\Delta_{1}|\zeta_{1}.
\end{align*}
We deduce from these inequalities that 
\begin{equation}\label{eq-cL-vw}
\sum_{j\ge 2}v_{j}\le \frac{v_{1}}{\sqrt{\kappa}-1} \quad \text{and for all $J\ge 1$}\quad \sum_{j\ge J+1}w_{j}\le \frac{\kappa^{1-J}w_{1}}{\kappa-1}\le \frac{\kappa^{1-J}|\Delta_{1}|\zeta_{1}}{\kappa-1}.
\end{equation}
Since
\begin{align*}
\pa{\frac{v_{1}}{\sqrt{|\Delta_{1}|}}}^{2}&=\frac{x_{1}(y_{0}-y_{1})\cro{\cL (y_{0})-\cL(y_{1})}}{\cL(y_{0})-\cL(y_{1})}=(\lambda-1)x_{1}y_{1}\le (\lambda-1)\zeta_{1},
\end{align*}
we derive from~\eref{eq-cL-vw} that
\begin{align*}
D_{1}+\sum_{j=2}^{J\et}D_{j}&\le  \frac{D\lambda \sqrt{(\lambda-1)}}{\sqrt{2\kappa \gamma }}+1+\sum_{j=2}^{J\et}\cro{1+\frac{\lambda v_{j} D}{2\sqrt{2\zeta_{1}\kappa \gamma |\Delta_{1}|}}}\le J\et+\frac{\lambda D}{\sqrt{2 \kappa \gamma}}\pa{\sqrt{\lambda-1}+\frac{1}{2\sqrt{\zeta_{1}|\Delta_{1}|}}\sum_{j\ge 2}v_{j}}\\
&\le J\et+\frac{\lambda D}{\sqrt{2 \kappa \gamma}}\pa{\sqrt{\lambda-1}+\frac{v_{1}}{2(\sqrt{\kappa}-1)\sqrt{\zeta_{1}|\Delta_{1}|}}}\le  J\et+\frac{\lambda\sqrt{\lambda-1} D}{\sqrt{2 \kappa \gamma}}\pa{1+\frac{1}{2(\sqrt{\kappa}-1)}}\\
&\le \cro{\frac{2}{\log \kappa}\pa{\frac{\kappa}{\kappa-1}}^{1/2}+\frac{\lambda\sqrt{\lambda-1}
}{\sqrt{2 \kappa }}\pa{1+\frac{1}{2(\sqrt{\kappa}-1)}}} \frac{D}{\sqrt{\gamma}}+1.
\end{align*}
which, with the value of $\gamma$ given by~\eref{eq-cL-defR} leads to
\begin{equation}\label{eq-cL-D}
D_{1}+\sum_{j=1}^{J\et}D_{j}\le D+1.
\end{equation}

As long as $x_{j-1}\ne b_{1}$ with $j\ge 2$, the interval $[x_{j-1},x_{j}]$ does not reduce to a singleton and by  Proposition~\ref{prop-guer1}-(ii), we may find a $D_{j}$-linear interpolation $\ell_{j}$ of $\cL p$ on this interval that satisfies 
\begin{align}
\int_{x_{j-1}}^{x_{j}}\ab{\cL p-\ell_{j}}d\mu&\le \frac{\pa{\cL(y_{j-1})-\cL(y_{j})}^{2}}{8D_{j}^{2}}\ab{\frac{1}{(\cL p)'_{r}(x_{j-1})}-\frac{1}{(\cL p)'_{l}(x_{j})}}\nonumber\\
&= \frac{\pa{\cL(y_{j-1})-\cL(y_{j})}^{2}}{8D_{j}^{2}}\pa{\frac{1}{|(\cL p)'_{r}(x_{j-1})|}-\frac{1}{|(\cL p)'_{l}(x_{j})|}}\nonumber\\
&\le  \frac{\pa{\cL(y_{j-1})-\cL(y_{j})}^{2}}{8|\Delta_{1}|D_{j}^{2}}\pa{\frac{|\Delta_{1}|}{|\phi(x_{j-1})|}-\frac{|\Delta_{1}|}{|\phi(x_{j})|}}\label{eq-pfL-02}
\end{align}
Since $p(x_{j})\ge y_{j}>0$, 
\[
u<\cL(y_{j})\le \cL p(x_{j})\le \ell_{1}(x)\le \cL p(x)\le \cL p(x_{j-1})<v\quad \text{for all $x\in [x_{j-1},x_{j}]$}
\]
and as before, we may define the function $q_{j}=\cL^{-1}\circ \ell_{j}$ on $[x_{j-1},x_{j}]$. It satisfies $0<q_{j}(x)\le p(x)$ for all $x\in [x_{j-1},x_{j}]$ and since $\cL^{-1}$ is Lipschitz on $[\cL(y_{j}),\cL(y_{j-1})]$ with a Lipschitz constant not larger than $(\cL^{-1})_{l}'(\cL(y_{j-1}))=(\cL_{l}'(y_{j-1}))^{-1}=(\cL_{l}'(\lambda y_{j}))^{-1}$, we deduce from \eref{eq-pfL-02} that 
\begin{align*}
\int_{x_{j-1}}^{x_{j}}\ab{p-q_{j}}d\mu&=\int_{x_{j-1}}^{x_{j}}\ab{\cL^{-1}(\cL p(x))-\cL^{-1}(\ell_{j}(x))}dx\le \frac{\pa{\cL(y_{j-1})-\cL(y_{j})}^{2}}{8\cL_{l}'(\lambda y_{j})|\Delta_{1}|D_{j}^{2}}\pa{\frac{|\Delta_{1}|}{|\phi(x_{j-1})|}-\frac{|\Delta_{1}|}{|\phi(x_{j})|}}.
\end{align*}
Using \eref{eq-condcL} and the fact that $\cL$ is concave, 
\begin{align*}
\cL_{l}'(\lambda y_{j})\ge \frac{\kappa}{\lambda^{2}}\cL_{r}'(y_{j})\ge  \frac{\kappa}{\lambda^{2}}\frac{\cL(y_{j-1})-\cL(y_{j})}{y_{j-1}-y_{j}}= \frac{\kappa}{\lambda^{2}}\frac{\pa{\cL(y_{j-1})-\cL(y_{j})}^{2}}{v_{j}^{2}}
\end{align*}
which, with our choice of $D_{j}$, leads to
\begin{align}
\int_{x_{j-1}}^{x_{j}}\ab{p-q_{j}}d\mu&\le \frac{\lambda^{2} v_{j}^{2}}{8 \kappa |\Delta_{1}|D_{j}^{2}}\pa{\frac{|\Delta_{1}|}{|\phi(x_{j-1})|}-\frac{|\Delta_{1}|}{|\phi(x_{j})|}}=\frac{\gamma\zeta_{1}}{D^{2}}\pa{\frac{|\Delta_{1}|}{|\phi(x_{j-1})|}-\frac{|\Delta_{1}|}{|\phi(x_{j})|}}.\label{eq-cL-B2}
\end{align}

If $J\ge 1$ and $x_{J}\ne b_{1}$, $p(x_{J})=y_{J}$ and since $\cL p$ is concave, it lies under its tangent at $x_{J}$ (which is nonpositive) so that for all $x\in [x_{J},b_{1})$
\[
u<\cL p(x)\le \cL p(x_{J})+(\cL p)_{r}'(x_{J})(x-x_{J})\le \cL p(x_{J})=\cL (y_{J})<v.
\]
Doing the change of variables $t=\cL p(x_{J})+(\cL p)_{r}'(x_{J})(x-x_{J})\in (u,v)$ and using \eref{eq-cL1}, \eref{eq-cL-vw} and the monotonicity of $\cL^{-1}$ we obtain that 
\begin{align*}
\int_{x_{J}}^{b_{1}}p(x)dx&=\int_{x_{J}}^{b_{1}}\cL^{-1}\pa{\cL p(x)}dx\le \int_{x_{J}}^{b_{1}}\cL^{-1}\pa{\cL p(x_{J})+(\cL p)_{r}'(x_{J})(x-x_{J})}dx\\
&\le \frac{1}{|(\cL p)_{r}'(x_{J})|}\int_{u}^{\cL(y_{J})}\cL^{-1}(t)dt=\frac{1}{|(\cL p)_{r}'(x_{J})|}\int_{0}^{y_{J}}z\cL_{l}'(z)dt\\
&=\frac{1}{|(\cL p)_{r}'(x_{J})|}\sum_{j\ge J+1}w_{j}\le \frac{\kappa^{1-J}}{\kappa-1}\frac{|\Delta_{1}|\zeta_{1}}{|(\cL p)_{r}'(x_{J})|}.
\end{align*}
Note that this inequality is clearly satisfided when $x_{J}=b_{1}$, hence for all $J\ge 1$. It is in particular true for $J=J\et\ge 1$ and 
since by definition of $J\et$,
\begin{align*}
\kappa^{J\et}=\exp\pa{J\et\log \kappa}\ge \pa{1+\frac{1}{2}J\et\log \kappa}^{2}\ge \pa{\frac{1}{2}J\et\log \kappa}^{2}\ge \frac{\kappa}{\kappa-1}\frac{D^{2}}{\gamma}\quad  \text{ hence }\quad \frac{\kappa^{1-J\et}}{\kappa-1}\le \frac{\gamma}{D^{2}},
\end{align*}
we obtain that for $J=J\et$
\begin{align}
\int_{x_{J}}^{b_{1}}p(x)dx\le \frac{\gamma\zeta_{1}}{D^{2}}\frac{|\Delta_{1}|}{|(\cL p)_{r}'(x_{J})|}\le \frac{\gamma\zeta_{1}}{D^{2}}\frac{|\Delta_{1}|}{|\phi(x_{J})|}.\label{eq-cL-B3}
\end{align}
Let $\overline J=\inf\{j\ge 1,\; x_{j}=b_{1}\}$, with the convention $\inf\varnothing=+\infty$, and $J=J\et\wedge \overline J\ge 1$. The function
\[
\overline q_{1}=q_{1}\1_{[0,x_{1}]}+\sum_{j=2}^{J}q_{j}\1_{(x_{j-1},x_{j}]}
\]
satisfies $0<\overline q_{1}(x)\le p(x)$ for all $x\in [0,x_{J}]$, $\overline q_{1}(x)=0$ outside the interval $[0,x_{J}]$. Furthermore, it follows from \eref{eq-cL-D} that $\cL\circ \overline q_{1}$ is a $\overline D_{1}$-linear interpolation of $\cL p$ on $[0,x_{J}]$ with 
\[
\overline D_{1}\le D_{1}+\sum_{j=2}^{J}D_{j}\le D_{1}+\sum_{j=2}^{J\et}D_{j}\le D+1
\]
and by putting the inequalities \eref{eq-cL-B1}, \eref{eq-cL-B2} and  \eref{eq-cL-B3} together,  we obtain that 
\begin{align*}
\int_{0}^{b_{1}}\ab{p-\overline q_{1}}d\mu&\le \sum_{j=1}^{J}\int_{x_{j-1}}^{x_{j}}\ab{p-\overline q_{j}}d\mu+\int_{x_{J}}^{b_{1}}\ab{p-\overline q_{j}}d\mu\\
&\le \frac{\gamma\zeta_{1}}{D^{2}}\pa{1-\frac{|\Delta_{1}|}{|\phi(x_{1})|}+\sum_{j=2}^{J}\pa{\frac{|\Delta_{1}|}{|\phi(x_{j-1})|}-\frac{|\Delta_{1}|}{|\phi(x_{j})|}}+\frac{|\Delta_{1}|}{|\phi(x_{J})|}}= \frac{\gamma\zeta_{1}}{D^{2}}.
\end{align*}
The function $\overline q_{1}$ satisfies thus our requirements. 

If $a\ne 0$, by applying the previous result to the density $x\mapsto p(-x)$, which also belongs to $\overline \cM(\cL)$, we obtain that there exists numbers $x_{-J}\le x_{-1}<0$ and a function $\overline q_{-1}$ which vanishes outside $[x_{-J},0]$ that satisfies the following properties: $0<\overline q_{-1}(x)\le p(x)$ for all $x\in  [x_{-J},0]$, $\cL\circ \overline q_{-1}$ is a $\overline D_{-1}$-linear interpolation of $\cL p$ on $[x_{-J},0]$ with $\overline D_{-1}\le D+1$ and  
\[
\int_{a}^{0}\ab{p-\overline q_{-1}}d\mu\le \frac{\gamma\zeta_{-1}}{D^{2}}\quad \text{with}\quad \zeta_{-1}=\int_{x_{-1}}^{0}p(x)dx.
\]

By taking $\overline q=\overline q_{1}$ and $x_{-1}=x_{-J}=0$ when $a=0$,  $\overline q=\overline q_{-1}$ and $x_{1}=x_{J}=0$ when $b=0$ and $\overline q=q_{-1}\1_{[x_{-J},0]}+\overline q_{1}\1_{(0,x_{J}]}$ otherwise, we obtain a function with the following properties. For all $x\in [x_{-J},x_{J}]$, $0<\overline q(x)\le p(x)$ and $\overline q(x)=0$ for all $x\not \in [x_{-J},x_{J}]$. The function $\cL \circ \overline q$ is a $\overline D$-linear interpolation of $\cL p$ on the interval $[x_{-J},x_{J}]$ with $\overline D\le 2(D+1)$, it is in particular concave on this interval, and 
\[
\int_{\R}\ab{p-\overline q}d\mu=\int_{a}^{b}\ab{p-\overline q}d\mu\le \frac{\gamma}{D^{2}}\pa{\zeta_{-1}+\zeta_{1}}\le  \frac{\gamma}{D^{2}}.
\]
By construction, the linear interpolation $\cL \circ \overline q$ of $\cL p$ on $[x_{-J},x_{J}]$ is unimodal on this interval with a mode at 0. Since $\cL^{-1}$ is increasing, the same is true for $\overline q$. We finally define 
\[
\overline p(x)=\overline q(\eta x)\1_{(-x_{J},x_{J})}(\eta x)\quad \text{with}\quad \eta=\int_{-x_{J}}^{x_{J}}\overline q(u)du=\int_{\R}\overline q(u)du.
\]
Since $0<\overline q(x)\le p(x)$ for all $x\in [x_{-J},x_{J}]$, $\eta\in (0,1]$. By construction, $\overline p$ is a density for which the set $\{\overline p>0\}$ is the open interval $(x_{-J}/\eta,x_{J}/\eta)$. Since $\cL\circ \overline q$ is a $\overline D$-linear interpolation of $\cL p$ on $[x_{-J},x_{J}]$,  it is concave on this interval and $\cL\overline p=\cL\circ \overline q(\eta\cdot)$ is therefore concave on $\{\overline p>0\}=(x_{-J}/\eta,x_{J}/\eta)$ and affine on each element of a partition of this interval into at most $\overline D$ intervals. The function $\cL \overline p$ takes the value $-\infty$ on $(-\infty,x_{-J}/\eta]$ and $[x_{J}/\eta,+\infty)$ and is therefore affine or takes the value $-\infty$ on a partition of $\R$ into at most $\overline D+2$ intervals. The density $\overline p$ therefore belongs to $\overline \cO_{\overline D+1}(\cL)\subset 
\overline \cO_{2D+3}(\cL)$. Since $\overline q$ is nondecreasing on $(x_{-J},0)$ and nonincreasing on $(0,x_{J})$, 
Lemma~\ref{lem-scale_dens} applies with $f=\overline q$, $c=0$ and $\kappa=\eta$, which leads to 
\begin{align*}
\int_{\R}\ab{p-\overline p}d\mu\le 2\int_{\R}\ab{p-\overline q}d\mu\le \frac{2\gamma}{D^{2}}.
\end{align*}
This complets the proof of Theorem~\ref{thm-approx-scv}.
\end{proof}

\section*{Acknowledgments}
One of the authors thanks Lutz D\"umbgen for asking if we could establish a result with $d(P\et, \widehat P)$ in place  of $n^{-1}\sum_{i=1}^{n}d(P_{i}, \widehat P)$
 at a conference where a preliminary version of this work was presented. His question spurred the authors to improve their result and obtain this new version of Theorem~\ref{shape-estimation-th}.

\end{document}